\theoremstyle{plain}
\newtheorem{lemma}{Lemma}[section]
\newtheorem{proposition}{Proposition}[section]
\newtheorem{theorem}{Theorem}[section]
\newtheorem{remark}{Remark}[section]
\newcommand{\p} {\mathbb{P}}
\newcommand{\E} {\mathbb{E}}
\newcommand{\kl} {\text{KL}}
\newcommand{\mf} {\text{MF}}
\newcommand{\textmin} {\text{min}}
\newcommand{\textsum} {\text{sum}}
\newcommand{\norm}[1]{\left\|{#1} \right\|}
\newcommand{\opnorm}[1]{\|#1\|_{\rm op}}
\DeclareMathOperator*{\argmin}{arg\,min}
\DeclareMathOperator*{\argmax}{arg\,max}
\let\oldenumerate\enumerate
\renewcommand{\enumerate}{
  \oldenumerate
  \setlength{\itemsep}{1pt}
  \setlength{\parskip}{0pt}
  \setlength{\parsep}{0pt}
}
\newcommand{\Ber} {\text{Ber}} 
\newcommand{\tbeta}{\text{Beta}}
\newcommand{\tcate}{\text{Categorical}}
\newcommand{\zi}{z_i}
\newcommand{\zj}{z_j}
\newcommand{\Ztruth}{Z^*}
\newcommand{\Ztrutht}{Z^{*T}}
\newcommand{\tpr}{\text{pri}}
\newcommand{\pipr}{\pi^\tpr}
\newcommand{\alphapr}{\alpha^\tpr}
\newcommand{\betapr}{\beta^\tpr}
\newcommand{\tinit}{\text{init}}
\newcommand*\diff{\mathop{}\!\mathrm{d}}
\begin{document}
\begin{frontmatter}
\title{Theoretical and Computational Guarantees \\ of Mean Field Variational Inference \\ for Community Detection}
\runtitle{Mean Field for Community Detection}

\begin{aug}
\author{\fnms{Anderson Y.} \snm{Zhang}\ead[label=e1]{ye.zhang@yale.edu}},
\and
\author{\fnms{Harrison H.} \snm{Zhou}\ead[label=e2]{huibin.zhou@yale.edu}\ead[label=u1,url]{http://www.stat.yale.edu/\textasciitilde hz68/}}
\runauthor{Zhang, Zhou}
\affiliation{Yale University}
\address{Department of Statistics\\
Yale University\\
New Haven, CT 06511 \\
\printead{e1}\\
\printead{e2}\\
\printead{u1}}
\end{aug}

\begin{abstract}
The mean field variational Bayes method is becoming increasingly popular in statistics and machine learning. Its iterative Coordinate Ascent Variational Inference algorithm has been widely applied to large scale Bayesian inference.  See Blei et al. (2017) for a recent comprehensive review. Despite the popularity of the mean field method there exist remarkably little fundamental theoretical justifications. To the best of our knowledge, the iterative algorithm has never been investigated for any high dimensional and complex model.  In this paper, we study the mean field method for community detection under the Stochastic Block Model. For an iterative Batch Coordinate Ascent Variational Inference algorithm, we show that it has a linear convergence rate and converges to the minimax rate within $\log n$ iterations. This complements the results of Bickel et al. (2013) which studied the global minimum of the mean field variational Bayes and obtained asymptotic normal estimation of global model parameters. In addition, we obtain similar optimality results for Gibbs sampling and an iterative procedure to calculate maximum likelihood estimation, which can be of independent interest.   

\end{abstract}

\begin{keyword}[class=MSC]
\kwd[Primary ]{60G05}
\end{keyword}

\begin{keyword}
\kwd{mean field}
\kwd{variational inference}
\kwd{Bayesian}
\kwd{community detection}
\kwd{stochastic block model}
\end{keyword}

\end{frontmatter}


\section{Introduction}

A major challenge of large scale Bayesian inference is the calculation of posterior distribution.  For high dimensional and complex models, the exact calculation of posterior distribution is often computationally intractable. To address this challenge, the mean field variational method \cite{beal2003variational, jordan1999introduction, wainwright2008graphical} is used to approximate posterior distributions in a wide range of applications in many fields including natural language processing \cite{blei2003latent, liang2007infinite}, computational neuroscience \cite{grabska2017probabilistic, penny2005bayesian}, and network science \cite{airoldi2008mixed, celisse2012consistency, hofman2008bayesian}. This method is different from Markov chain Monte Carlo (MCMC) \cite{gelfand1990sampling, robert2004monte}, another popular approximation algorithm. The variational inference approximation is deterministic for each iterative update, while MCMC is a randomized sampling algorithm, so that for large-scale data analysis, the mean field variational Bayes usually converges faster than MCMC \cite{blei2017variational}, which is particularly attractive in the big data era.

In spite of a wide range of successful applications of the mean field variational Bayes, its fundamental theoretical properties are rarely investigated. The existing literature \cite{bickel2013asymptotic, celisse2012consistency, wang2006convergence, westling2015establishing, you2014variational} is mostly on low dimensional parameter estimation and on the global minimum of the variational Bayes method. For example, in a recent inspiring paper, Wang and Blei \cite{wang2017frequentist} studied the frequentist consistency of the variational method for a general class of latent variable models. They obtained consistency for low dimensional global parameters and further showed asymptotic normality, assuming the global minimum of the variational Bayes method can be achieved. However, it is often computationally infeasible to attain the global minimum when the model is high-dimensional or complex. This motivates us to investigate the statistical properties of the mean field in high dimensional settings, and more importantly, to understand the statistical and computational guarantees of the iterative variational inference algorithms.

The success and the 	popularity of the mean field method in Bayesian inference mainly lies in the success of its iterative algorithm: Coordinate Ascent Variational Inference (CAVI) \cite{blei2017variational}, which provides a computationally efficient way to approximate the posterior distribution. It is important to understand what statistical properties CAVI has and how do they compare to the optimal statistical accuracy. In addition, we want to investigate how fast CAVI converges for the purpose of implementation. With the ambition of establishing a universal theory of the mean field iterative algorithm for general models in mind, in this paper, we consider the community detection problem \cite{bickel2009nonparametric, mossel2012stochastic, newman2006modularity, airoldi2008mixed, gao2015achieving, zhang2016minimax} under the Stochastic Block Model (SBM) \cite{holland1983stochastic, bickel2009nonparametric, rohe2011spectral, lei2015consistency} as our first step.

Community detection has been an active research area in recent years, with the SBM as a popular choice of model. The Bayesian framework and the variational inference for community detection are considered in \cite{bickel2013asymptotic, gao2015general, airoldi2008mixed, celisse2012consistency, hofman2008bayesian, razaee2017matched}. For high dimensional settings, Celisse et al. \cite{celisse2012consistency} and Bickel et al. \cite{bickel2013asymptotic} are arguably the first to study the statistical properties of the mean field for SBMs. The authors built an interesting connection between full likelihood and variational likelihood, and then studied the closeness of maximum likelihood and maximum variational likelihood, from which they obtained consistency and asymptotic normality for global parameter estimation. From a personal communication with the authors of Bickel et al. \cite{bickel2013asymptotic}, an implication of their results is that the variational method achieves exact community recovery under a strong signal-to-noise (SNR) ratio. Their analysis idea is fascinating, but it is not clear whether it is possible to extend the analysis to other SNR conditions under which exact recovery may never be possible. More importantly, it may not be computationally feasible to maximize the variational likelihood for the SBM, as seen from Theorem \ref{thm:MF_simplifed}.

In this paper, we consider the statistical and computational guarantees of the iterative variational inference algorithm for community detection. The primary goal of community detection problem is to recover the community membership in a network. We measure the performance of the iterative variational inference algorithm by comparing its output with the ground truth. Denote the underlying ground truth by $\Ztruth$. For a network of $n$ nodes and $k$ communities, $\Ztruth$ is an $n\times k$ matrix with each row a standard Euclidean basis in $\mathbb{R}^k$. The index of non-zero coordinate of each row $\{\Ztruth_{i,\cdot}\}_{i=1}^n$ gives the community assignment information for the corresponding node.  We propose an iterative algorithm called Batch Coordinate Ascent Variational Inference (BCAVI), a slight modification of CAVI with batch updates, to make parallel and distributed computing possible.  Let $\pi^{(s)}$ denote the output of the $s$-th iteration, an $n\times k$ matrix with nonnegative entries. The summation of each row $\{\pi^{(s)}_{i,\cdot}\}_{i=1}^n$ is equal to 1, which is interpreted as an approximate posterior probability of assigning the corresponding node of each row into $k$ communities. The performance of $\pi^{(s)}$ is measured by an $\ell_1$ loss $\ell(\cdot,\cdot)$ compared with $\Ztruth$.
~\\
\newline
\textit{An Informal Statement of Main Result:} Let $\pi^{(s)}$ be the estimation of community membership from the iterative algorithm BCAVI after $s$ iterations. Under weak regularity condition, for some $c_n=o_n(1)$, with high probability, we have for all $s\geq 0$,
\begin{align}\label{eqn:informal}
\ell(\pi^{(s+1)},\Ztruth) \leq \text{minimax rate} + c_n\ell( \pi^{(s)},\Ztruth).
\end{align}
~\\
\indent 
The main contribution of this paper is Equation (\ref{eqn:informal}). The coefficient $c_n$ is $o_n(1)$ and is independent of $s$, which implies $\ell(\pi^{(s)},\Ztruth) $ decreases at a fast linear rate. In addition, we show that BCAVI converges to the statistical optimality \cite{zhang2016minimax}. It is worth mentioning that after $\log n$ iterations BCAVI attains the minimax rate, up to an error $o_n(n^{-a})$ for any constant $a>0$. The conditions required for the analysis of BCAVI are relatively mild. We allow the number of communities to grow. The sizes of the communities are not assumed to be of the same order. The separation condition on global parameters covers a wide range of settings from consistent community detection to exact recovery.  

To the best of our knowledge this provides arguably the first theoretical justification for the iterative algorithm of the mean field variational method in a high-dimensional and complex setting. Though we focus on the problem of community detection in this paper, we hope the analysis would shed some light on analyzing other models, which may eventually lead to a general framework of understanding the mean field theory. 

The techniques of analyzing the mean field can be extended to providing theoretical guarantees for other iterative algorithms, including Gibbs sampling and an iterative procedure for maximum likelihood estimation, which can be of independent interest.  Results similar to Equation (\ref{eqn:informal}) are obtained for both methods under the SBM.

\paragraph{Organization} The paper is organized as follows. In Section \ref{sec:mf_all} we introduce the mean field theory and the implementation of BCAVI algorithm for community detection.  All the theoretical justifications for the mean field method are in Section \ref{sec:main_theory}. Discussions on the convergence of the global minimizer and other iterative algorithms are presented in Section \ref{sec:diss}. The proofs of theorems are in Section \ref{sec:main_proof}. We include all the auxiliary lemmas and propositions and their corresponding proofs in the supplemental material.

\paragraph{Notation} Throughout this paper, for any matrix $X\in\mathbb{R}^{n\times m}$, its $\ell_1$ norm is defined in analogous to that of a vector. That is, $\norm{X}_1 =\sum_{i,j}|X_{i,j}|$. We use the notation $X_{i,\cdot}$ and $X_{\cdot,i}$ to indicate its $i$-th row and column respectively. For matrices $X,Y$ of the same dimension, their inner product is defined as $\langle X,Y \rangle=\sum_{i,j}X_{i,j}Y_{i,j}$. For any set $D$, we use $|D|$ for its cardinality. We denote $\Ber(p)$ for a Bernoulli random variable with success probability $p$. For two positive sequences $x_n$ and $y_n$, $x_n \lesssim y_n$ means $x_n \leq cy_n$ for some constant $c$ not depending on $n$. We adopt the notation $x_n\asymp y_n$ if $x_n \lesssim y_n$ and $y_n \lesssim x_n$. To distinguish from the probabilities $p,q$, we use bold $\mathbf{p}$ and $\mathbf{q}$ to indicate distributions. The Kullback-Leibler divergence between two distributions is defined as $\kl(\mathbf{p}\|\mathbf{q})=\E_\mathbf{q}\log (\mathbf{p}(x)/\mathbf{q}(x))$. We use $\psi(\cdot)$ for the digamma function, which is defined as the logarithmic derivative of Gamma function, i.e., $\psi(x)=   \frac{d}{dx}\left[\log \Gamma(x)\right]$. In any $\mathbb{R}^d$, we denote $\{e_a\}_{a=1}^d$ to be the standard Euclidean basis with $e_1=(1,0,0,\ldots), e_2=(0,1,0,\ldots,0),\ldots, e_d=(0,0,0,\ldots, 1)$. We let $1_d$ be a vector of length $d$ whose entries are all $1$. We use $[d]$ to indicate the set $\{1,2,\ldots,d\}$. Throughout this paper, the superscript ``pri'' (e.g., $\pipr$)  indicates that this is a hyperparameters of priors.

\section{Mean Field Method for Community Detection}\label{sec:mf_all}

In this section, we first give a brief introduction to the variational inference method in Section \ref{sec:general_mf}. Then we introduce the community detection problem and the Stochastic Block Model in Section \ref{sec:SBM}. The Bayesian framework is presented in Section \ref{sec:bayesian}. Its mean field approximation and CAVI updates are given in Section \ref{sec:mf} and Section \ref{sec:CAVI} respectively. The BCAVI algorithm is introduced in Section \ref{sec:BCAVI}.

\subsection{Mean Field Variational Inference}\label{sec:general_mf}
We first present the mean field method in a general setting and then consider its application to the community detection problem. Let $\mathbf{p}(x|y)$ be an arbitrary posterior distribution for $x$, given observation $y$. Here $x$ can be a vector of latent variables, with coordinates $\{x_i\}$. It may be difficult to compute the posterior $\mathbf{p}(x|y)$ exactly. The variational Bayes ignores the dependence among $\{x_i\}$, by simply taking a product measure $\mathbf{q}(x)=\prod_i \mathbf{q}_i(x_i)$ to approximate it. Usually each $\mathbf{q}_i(x_i)$ is simple and easy to compute. The best approximation is obtained by minimizing the Kullback-–Leibler divergence between $\mathbf{q}(x)$ and $\mathbf{p}(x|y)$:
\begin{align}\label{eqn:general_global_min}
\mathbf{\hat q}^\mf = \argmin_{\mathbf{q}\in\mathbf{Q}}\kl(\mathbf{q}\|\mathbf{p}).
\end{align}
Despite the fact that every measure $\mathbf{q}$ has a simple product structure, the global minimizer $\mathbf{\hat q}^\mf$ remains computationally intractable. 

To address this issue, an iterative Coordinate Ascent Variational Inference (CAVI) is widely used to approximate the global minimum. It is a greedy algorithm. The value of $\kl(\mathbf{q}\|\mathbf{p})$ decreases in each coordinate update:
\begin{align}\label{eqn:CAVI_update_general}
\mathbf{\hat q}_i = \min_{\mathbf{q}_i\in \mathbf{Q}_i}\kl\left[\mathbf{q}_i\prod_{j\neq i}\mathbf{q}_j\Bigg\|\mathbf{p}\right],\forall i.
\end{align}
The coordinate update has an explicit formula
\begin{align}\label{eqn:CAVI_update_general_explicit}
\mathbf{\hat q}_i(x_i) \propto\exp\left[\E_{\mathbf{q}_{-i}}\left[\log \mathbf{p}(x_i|x_{-i},y)\right]\right],
\end{align}
where $x_{-i}$ indicates all the coordinates in $x$ except $x_i$, and the expectation is over $\mathbf{q}_{-i}=\prod_{j\neq i}\mathbf{q}_j(x_j)$.  Equation (\ref{eqn:CAVI_update_general_explicit}) is usually easy to compute, which makes CAVI computationally attractive, although CAVI only guarantees to achieve a local minimum.  

In summary, the mean field variational inference via CAVI can be represented in the following diagram:
\begin{align*}
\mathbf{p}(x|y)\stackrel{\text{approx.}}{\Longleftarrow}\mathbf{\hat q}^\mf(x) \stackrel{\text{approx.}}{\Longleftarrow}\mathbf{\hat q}^\text{CAVI}(x),
\end{align*}
where $\mathbf{\hat q}^\mf(x)$, the global minimum, serves mainly as an intermediate step in the mean field methodology. What is implemented in practice to approximate global minimum is an iterative algorithm like CAVI. This motivates us to consider directly the theoretical guarantees of the iterative algorithm in this paper.

We refer the readers to a nice review and tutorial by Blei et al. \cite{blei2017variational} for more detail on the variational inference and CAVI. The derivation from Equation (\ref{eqn:CAVI_update_general}) to Equation (\ref{eqn:CAVI_update_general_explicit}) can be found in many variational inference literatures \cite{blei2017variational, bishop2006pattern}. We include it in Appendix \ref{sec:appendix_general} in the supplemental material for completeness.

\subsection{Community Detection and Stochastic Block Model}\label{sec:SBM}
The Stochastic Block Model (SBM) has been a popular model for community detection. 

Consider an $n$-node network with its adjacency matrix denoted by $A$. It is an unweighted and undirected network without self-loops, with $A\in\{0,1\}^{n\times n}$, $A=A^T$ and $A_{i,i}=0,\forall i\in[n]$. Each edge is an independent Bernoulli random variable with $\E A_{i,j}=P_{i,j},\forall i<j.$  In the SBM, the value of connectivity probability $P_{i,j}$ depends on the communities the two endpoints $i$ and $j$ belong to. We assume $P_{i,j}=p$ if both nodes come from the same community and $P_{i,j}=q$ otherwise. There are $k$ communities in the network. We denote $z\in[k]^n$, as the assignment vector, with $\zi$ indicating the index of community the $i$-th node belongs to. Thus, the connectivity probability matrix $P$ can be written as
\begin{align*}
P_{i,j}=B_{\zi,\zj},
\end{align*}
where $B\in [0,1]^{k\times k}$ with diagonal entries as $p$ and off-diagonal entries as $q$. That is, $B=q 1_k1_k^T+(p-q)I_k$. Let $Z\in\Pi_0$ be the assignment matrix where
\begin{align*}
\Pi_0= \{\pi\in\{0,1\}^{n\times k}:\norm{\pi_{i,\cdot}}_0=1,\forall i\in[n]\}.
\end{align*}
In each row $\{Z_{i,\cdot}\}_{i=1}^n$ there is only one 1 with all the other coordinates as 0, indicating the assignment of community for the corresponding node. Then $P$ can be equivalently written as $P_{i,j}=Z_{i,\cdot} B Z_{j,\cdot}^T,\forall i<j$, or in a matrix form
\begin{align*}
P_{i,j}=(ZBZ^T)_{i,j},\forall i<j.
\end{align*}

The goal of community detection is to recover the assignment vector $z$, or equivalently, the assignment matrix $Z$. The equivalence can be seen by observing that there is a bijection $r$ between $z\in[k]^n$ and $Z\in\Pi_0$ which is defined as follows,
\begin{align}\label{eqn:bijection_r}
 r(z)=Z\text{, where } Z_{i,a}=\mathbb{I}\{a=z_i\},\forall i\in[n],a\in[k].
\end{align}
Since they are uniquely determined by each other, in our paper we may use $z$ directly without explicitly defining $z=r^{-1}(Z)$ (or vice versa) when there is no ambiguity.

\subsection{A Bayesian Framework}\label{sec:bayesian}
Throughout the whole paper, we assume $k$, the number of communities, is known. We observe the adjacency matrix $A$. The global parameters $p$ and $q$ and the community assignment $Z$ are unknown. From the description of the model in Section \ref{sec:SBM}, we can write down the distribution of $A$ as follows:
\begin{align}\label{eqn:likelihood}
\mathbf{p}(A|Z,p,q) = \prod_{i<j}B_{z_i,z_j}^{A_{i,j}}(1-B_{z_i,z_j})^{1-A_{i,j}},
\end{align}
with $B=q1_k1_k^T+(p-q) I_k$ and $z=r^{-1}(Z)$. We are interested in Bayesian inference for estimating $Z$, with prior to be given on both $p,q$ and $Z$.

We assume that $\{z_{i}\}_{i=1}^n$ have independent categorical (a.k.a. multinomial with size one) priors with hyperparameters $\{\pipr_{i,\cdot}\}_{i=1}^n$, where $\sum_{a=1}^k \pipr_{i,a}=1,\forall i\in[n]$. In other words, $\{Z_{i,\cdot}\}_{i=1}^n$ are independently distributed by
\begin{align*}
\p(Z_{i,\cdot}= e_a^T) = \pipr_{i,a},\forall a=1,2,\ldots,k,
\end{align*}
where $\{e_a\}_{a=1}^k$ are the coordinate vectors. Here we allow the priors for $Z_{i,\cdot}$ to be different for different $i$. If additionally $\pi_{i,\cdot}=\pi_{j,\cdot}$ for all $i\neq j$ is assumed, and then this is reduced to the usual case of i.i.d. priors.

Since $\{A_{i,j}\}_{i<j}$ are Bernoulli, it is natural to consider a conjugate Beta prior for $p$ and $q$. Let $p\sim\tbeta(\alphapr_p,\betapr_p)$ and $q\sim\tbeta(\alphapr_q,\betapr_q)$. Then the joint distribution is
\begin{align}\label{eqn:joint_likelihood}
\mathbf{p}(A,Z,p,q)&=\left[\prod_i \pipr_{i,z_i}\right]\left[\prod_{i<j}B_{z_i,z_j}^{A_{i,j}}(1-B_{z_i,z_j})^{1-A_{i,j}}\right]\\
&\quad\times\left[\frac{\Gamma(\alphapr_p+\betapr_p)}{\Gamma(\alphapr_p)\Gamma(\betapr_p)}p^{\alphapr_p-1}(1-p)^{\betapr_p-1}\right] \left[\frac{\Gamma(\alphapr_q+\betapr_q)}{\Gamma(\alphapr_q)\Gamma(\betapr_q)}q^{\alphapr_q-1}(1-q)^{\betapr_q-1}\right].\nonumber
\end{align}
Our main interest is to infer $Z$, from the posterior distribution $\mathbf{p}(Z,p,q|A)$. However, the exact calculation of $\mathbf{p}(Z,p,q|A)$ is computationally intractable.

\subsection{Mean Field Approximation}\label{sec:mf}
Since the posterior distribution $\mathbf{p}(Z,p,q|A)$ is computationally intractable, we apply the mean field approximation to approximate it by a product measure,
\begin{align*}
\mathbf{q}_{\pi,\alpha_p,\beta_p,\alpha_q,\beta_q}(Z,p,q)=\mathbf{q}_{\pi}(Z)\mathbf{q}_{\alpha_p,\beta_p}(p)\mathbf{q}_{\alpha_q,\beta_q}(q)
\end{align*}
where $\{r^{-1}(Z_{i,\cdot})\}_{i=1}^n$ are independent categorical variables with parameters $\{\pi_{i,\cdot}\}_{i=1}^n$, i.e., $\mathbf{q}_{\pi}(Z)=\prod_{i=1}^n \mathbf{q}_{\pi_{i,\cdot}}(Z_{i,\cdot})$ with 
 \begin{align*}
 \mathbf{q}_{\pi_{i,\cdot}}(Z_{i,\cdot}=e_a) =\pi_{i,a},\forall i\in[n],a\in[k],
 \end{align*}
and $\mathbf{q}_{\alpha_p,\beta_p}(p)$ and $\mathbf{q}_{\alpha_q,\beta_q}(q)$ are Beta with parameters $\alpha_p,\beta_p,\alpha_q,\beta_q$ due to conjugacy. See Figure \ref{fig:plate} for the graphical presentation of $\mathbf{q}_{\pi,\alpha_p,\beta_p,\alpha_q,\beta_q}(Z,p,q)$. 

\begin{figure}[ht]
\includegraphics[width=\textwidth]{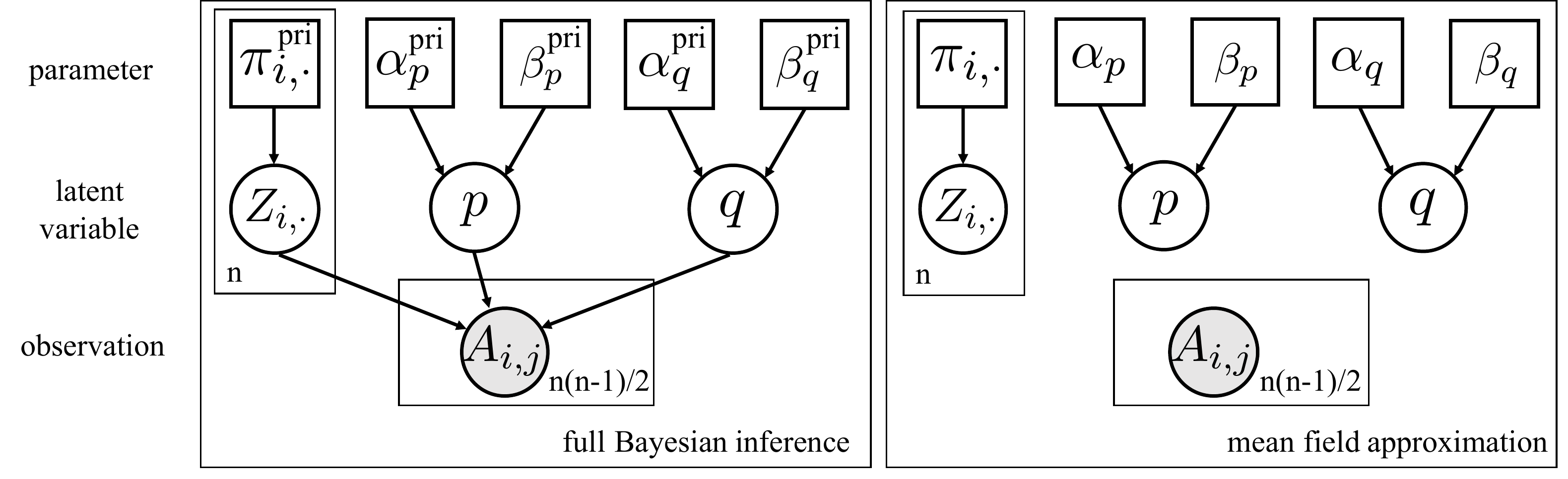}
\caption{Graphical model presentations of full Bayesian inference (left panel) and the mean field approximation (right panel) for community detection. The edges show the dependence among variables. \label{fig:plate}}
\end{figure}

Note that the distribution class of $\mathbf{q}$ is fully captured by the parameters $(\pi,\alpha_p,\beta_p,\alpha_q,\beta_q)$, and then the optimization in Equation (\ref{eqn:general_global_min}) is equivalent to minimize over the parameters as
\begin{align}
(\hat\pi^\mf,\hat\alpha_p^\mf,\hat\beta_p^\mf,\hat\alpha_q^\mf,\hat\beta_q^\mf) &= \argmin_{\substack{\pi\in\Pi_1 \\ \alpha_p,\beta_p,\alpha_q,\beta_q>0}}\kl\left[\mathbf{q}_{\pi,\alpha_p,\beta_p,\alpha_q,\beta_q}(Z,p,q)\Big\|\mathbf{p}(Z,p,q|A)\right],\label{eqn:MF}\\
\text{where }\Pi_1&=\{\pi\in[0,1]^{n\times k},\norm{\pi_{i,\cdot}}_1=1\}.\nonumber
\end{align}
 
Here $\Pi_1$ can be viewed as a relaxation of $\Pi_0$: it uses an $\ell_1$ constraint on each row instead of the $\ell_0$ constraint used in $\Pi_0$. The global minimizer $\mathbf{q}_{\hat\pi^\mf}(Z)$ gives approximate probabilities to classify every node to each community. The optimization in Equation (\ref{eqn:MF}) can be shown to be equivalent to a more explicit optimization as follows. Recall $\psi(\cdot)$ is the digamma function with $\psi(x)=\frac{\diff}{\diff x}[\log\Gamma(x)]$.

\begin{theorem}\label{thm:MF_simplifed}
The mean field estimator $(\hat\pi^\mf,\hat\alpha_p^\mf,\hat\beta_p^\mf,\hat\alpha_q^\mf,\hat\beta_q^\mf)$ defined in Equation (\ref{eqn:MF}) is equivalent to
\begin{align*}
(\hat\pi^\mf,\hat\alpha_p^\mf,\hat\beta_p^\mf,\hat\alpha_q^\mf,\hat\beta_q^\mf)= \argmin_{\substack{\pi\in\Pi_1 \\ \alpha_p,\beta_p,\alpha_q,\beta_q>0}} f(\pi,\alpha_p,\beta_p,\alpha_q,\beta_q; A),
\end{align*}
where
\begin{align*}
f(\pi,\alpha_p,\beta_p,\alpha_q,\beta_q; A) &= t\langle A-\lambda 1_n1_n^T+\lambda I_n,\pi\pi^{T}\rangle + \frac{1}{2}\left[\psi(\alpha_q)-\psi(\beta_q)\right]\norm{A}_1\\
&+ \frac{n}{2}\left[\psi(\beta_q)-\psi(\alpha_q+\beta_q)\right] -\sum_{i=1}^n\kl\left[\tcate(\pi_{i,\cdot})\|\tcate(\pipr_{i,\cdot})\right]\\
&-\kl\left[\tbeta(\alpha_p,\beta_p)\|\tbeta(\alphapr_p,\betapr_p)\right]- \kl\left[\tbeta(\alpha_q,\beta_q)\|\tbeta(\alphapr_q,\betapr_q)\right],
\end{align*}
and
\begin{align}
t&=\left[[\psi(\alpha_p)-\psi(\beta_p)] - [\psi(\alpha_q)-\psi(\beta_q)]\right]/2\label{eqn:t_formula}\\
\lambda &= \left[[\psi(\beta_q)-\psi(\alpha_q+\beta_q)]- [\psi(\beta_p)-\psi(\alpha_p+\beta_p)]\right]/(2t).\label{eqn:lambda_formula}
\end{align}
\end{theorem}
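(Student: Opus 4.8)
The plan is to recognize the functional in~(\ref{eqn:MF}) as (minus) the evidence lower bound and then expand it term by term. Writing $\mathbf{q}=\mathbf{q}_{\pi,\alpha_p,\beta_p,\alpha_q,\beta_q}$ for short, the identity $\mathbf{p}(Z,p,q|A)=\mathbf{p}(A,Z,p,q)/\mathbf{p}(A)$ gives
\begin{align*}
\kl\big[\mathbf{q}(Z,p,q)\,\big\|\,\mathbf{p}(Z,p,q|A)\big]=\log\mathbf{p}(A)-\Big(\E_{\mathbf{q}}\log\mathbf{p}(A,Z,p,q)-\E_{\mathbf{q}}\log\mathbf{q}(Z,p,q)\Big),
\end{align*}
and $\log\mathbf{p}(A)$ does not depend on $(\pi,\alpha_p,\beta_p,\alpha_q,\beta_q)$. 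Hence the variational problem~(\ref{eqn:MF}) is equivalent to optimizing the parenthesized quantity over $(\pi,\alpha_p,\beta_p,\alpha_q,\beta_q)$, and it suffices to show that this quantity equals $f(\pi,\alpha_p,\beta_p,\alpha_q,\beta_q;A)$ up to an additive constant free of the variational parameters.

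Next I would expand that quantity using the factorization~(\ref{eqn:joint_likelihood}) of the joint density together with the product form $\mathbf{q}(Z,p,q)=\mathbf{q}_\pi(Z)\mathbf{q}_{\alpha_p,\beta_p}(p)\mathbf{q}_{\alpha_q,\beta_q}(q)$. It splits into a likelihood contribution $\E_{\mathbf{q}}\log\mathbf{p}(A|Z,p,q)$ and three prior-versus-variational contributions; since under $\mathbf{q}$ the row $Z_{i,\cdot}$ is $\tcate(\pi_{i,\cdot})$, $p$ is $\tbeta(\alpha_p,\beta_p)$ and $q$ is $\tbeta(\alpha_q,\beta_q)$, the latter three are by definition $-\sum_{i}\kl[\tcate(\pi_{i,\cdot})\|\tcate(\pipr_{i,\cdot})]$, $-\kl[\tbeta(\alpha_p,\beta_p)\|\tbeta(\alphapr_p,\betapr_p)]$ and $-\kl[\tbeta(\alpha_q,\beta_q)\|\tbeta(\alphapr_q,\betapr_q)]$, which are exactly the last three terms of $f$. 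For the likelihood contribution I would use $Z_{i,\cdot}Z_{j,\cdot}^{T}=\mathbb{I}\{z_i=z_j\}\in\{0,1\}$ to linearize, writing $\log B_{z_i,z_j}=\log q+(\log p-\log q)Z_{i,\cdot}Z_{j,\cdot}^{T}$ and $\log(1-B_{z_i,z_j})=\log(1-q)+(\log(1-p)-\log(1-q))Z_{i,\cdot}Z_{j,\cdot}^{T}$, so that each summand of $\log\mathbf{p}(A|Z,p,q)=\sum_{i<j}[A_{i,j}\log B_{z_i,z_j}+(1-A_{i,j})\log(1-B_{z_i,z_j})]$ becomes affine in $Z_{i,\cdot}Z_{j,\cdot}^{T}$. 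Taking $\E_{\mathbf{q}}$ and invoking the standard Beta--digamma identities $\E_{\mathbf{q}}\log p=\psi(\alpha_p)-\psi(\alpha_p+\beta_p)$, $\E_{\mathbf{q}}\log(1-p)=\psi(\beta_p)-\psi(\alpha_p+\beta_p)$ (and their analogues for $q$), together with $\E_{\mathbf{q}}[Z_{i,\cdot}Z_{j,\cdot}^{T}]=\pi_{i,\cdot}\pi_{j,\cdot}^{T}$ for $i\neq j$, then yields an explicit expression quadratic in $\pi$.

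The remaining work is to collect terms. The coefficient of $\pi_{i,\cdot}\pi_{j,\cdot}^{T}$ in the quadratic part is $A_{i,j}\big([\psi(\alpha_p)-\psi(\beta_p)]-[\psi(\alpha_q)-\psi(\beta_q)]\big)+\big([\psi(\beta_p)-\psi(\alpha_p+\beta_p)]-[\psi(\beta_q)-\psi(\alpha_q+\beta_q)]\big)$, which is exactly $2tA_{i,j}-2t\lambda$ by the definitions~(\ref{eqn:t_formula})--(\ref{eqn:lambda_formula}) of $t$ and $\lambda$; using $A=A^{T}$, $A_{i,i}=0$ and the identities $\sum_{i<j}A_{i,j}\pi_{i,\cdot}\pi_{j,\cdot}^{T}=\frac{1}{2}\langle A,\pi\pi^{T}\rangle$ and $\sum_{i<j}\pi_{i,\cdot}\pi_{j,\cdot}^{T}=\frac{1}{2}\langle 1_n1_n^{T}-I_n,\pi\pi^{T}\rangle$, this sums to $t\langle A-\lambda 1_n1_n^{T}+\lambda I_n,\pi\pi^{T}\rangle$. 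The $\pi$-free part of the likelihood contribution, $\sum_{i<j}[A_{i,j}\E_{\mathbf{q}}\log q+(1-A_{i,j})\E_{\mathbf{q}}\log(1-q)]$, separates (using $\norm{A}_1=2\sum_{i<j}A_{i,j}$) into $\frac{1}{2}[\psi(\alpha_q)-\psi(\beta_q)]\norm{A}_1$ plus a term that depends on $(\alpha_q,\beta_q)$ only through $\psi(\beta_q)-\psi(\alpha_q+\beta_q)$ with a multiplier depending only on $n$; adding the three KL terms identified above recovers $f$. The whole argument is essentially bookkeeping, so the only step I would treat with care is the use of the product (mean field) structure of $\mathbf{q}$ to factor $\E_{\mathbf{q}}[(\log p)Z_{i,\cdot}Z_{j,\cdot}^{T}]=\E_{\mathbf{q}}[\log p]\,\pi_{i,\cdot}\pi_{j,\cdot}^{T}$ and to collapse the cross terms to $\pi_{i,\cdot}\pi_{j,\cdot}^{T}$ --- which is precisely where the dependence structure of the true posterior is discarded --- together with keeping the signs and the $t,\lambda$ reparameterization straight.
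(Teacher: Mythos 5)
Your proposal is correct and follows essentially the same route as the paper's proof: rewrite the KL objective via the evidence lower bound, split off the three prior-versus-variational KL terms using the product structure of $\mathbf{q}$, expand $\E_{\mathbf{q}}\log\mathbf{p}(A|Z,p,q)$ with the Beta--digamma identities and $\E_{\mathbf{q}}[Z_{i,\cdot}Z_{j,\cdot}^T]=\pi_{i,\cdot}\pi_{j,\cdot}^T$, and collect the quadratic terms into $t\langle A-\lambda 1_n1_n^T+\lambda I_n,\pi\pi^T\rangle$. Your linearization of $\log B_{z_i,z_j}$ in the indicator $Z_{i,\cdot}Z_{j,\cdot}^T$ is just a repackaging of the paper's split of the double sum over community labels into its diagonal and full parts, so there is no substantive difference.
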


The explicit formulation in Theorem \ref{thm:MF_simplifed} is helpful to understand the global minimizer of the mean field method. However, the global minimizer $\hat\pi^\mf$ remains computationally infeasible as the objective function is not convex. Fortunately, there is a practically useful algorithm to approximate it.

\subsection{Coordinate Ascent Variational Inference}\label{sec:CAVI}

CAVI is possibly the most popular algorithm to approximate  the global minimum of the mean field variational Bayes.  It is an iterative algorithm. In Equation (\ref{eqn:MF}), there are latent variables $\{Z_{i,\cdot}\}_{i=1}^n,p,q$.  CAVI updates them one by one. Since the distribution class of $\mathbf{q}$ is uniquely determined by the parameters $\{\pi_{i,\cdot}\}_{i=1}^n,\alpha_p,\beta_p,\alpha_q,\beta_q$, equivalently we are updating those parameters iteratively. Theorem \ref{thm:coor_update} gives explicit formulas for the coordinate updates.

\begin{theorem}\label{thm:coor_update}
Starts with some $\pi,\alpha_p,\beta_p,\alpha_q,\beta_q$, the CAVI update for each coordinate (i.e., Equation (\ref{eqn:CAVI_update_general}) and Equation (\ref{eqn:CAVI_update_general_explicit})) has an explicit expression as follows:
\begin{itemize}
\item Update on $p$:
\begin{align*}
\alpha'_p = \alphapr_p + \sum_{i<j}\sum_{a=1}^k \pi_{i,a}\pi_{j,a}A_{i,j},\text{ and }\beta_p'=\betapr_p+\sum_{i<j}\sum_{a=1}^k \pi_{i,a}\pi_{j,a}(1-A_{i,j}).
\end{align*}
\item Update on $q$:
\begin{align*}
\alpha'_q = \alphapr_q + \sum_{i<j}\sum_{a\neq b}\pi_{i,a}\pi_{j,b}A_{i,j},\text{ and }\beta_q'=\betapr_q+\sum_{i<j}\sum_{a\neq b}\pi_{i,a}\pi_{j,b}(1-A_{i,j}).
\end{align*}
\item Update on $Z_{i,\cdot},\forall i=1,2,\ldots,n$:
\begin{align*}
\pi'_{i,a}\propto \pipr_{i,a}\exp\left[2t\sum_{j\neq i}\pi_{j,a}(A_{i,j}-\lambda)\right],\forall a=1,2,\ldots,k,
\end{align*}
where $t$ and $\lambda$ are defined in Equation (\ref{eqn:t_formula}) and Equation (\ref{eqn:lambda_formula}) respectively, and the normalization satisfies $\sum_{a=1}^k\pi'_{i,a}=1$.
\end{itemize}
\end{theorem}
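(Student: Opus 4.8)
The plan is to apply the general coordinate-update formula (\ref{eqn:CAVI_update_general_explicit}) block by block. It is convenient to use the equivalent form $\mathbf{\hat q}_i(x_i)\propto\exp\{\E_{\mathbf{q}_{-i}}\log\mathbf{p}(A,Z,p,q)\}$, which is legitimate because $\log\mathbf{p}(x_i\mid x_{-i},A)$ and $\log\mathbf{p}(A,Z,p,q)$ differ only by a term free of $x_i$, hence by a constant that is swept into the normalization. Starting from the logarithm of the joint density (\ref{eqn:joint_likelihood}), I will then treat the three blocks $p$, $q$, and $Z_{i,\cdot}$ separately. Throughout, two elementary facts are used repeatedly: (i) under the product measure $\mathbf{q}_\pi$ the community indicators factorize, so $\E_{\mathbf{q}_\pi}\mathbb{I}\{z_i=z_j\}=\sum_{a=1}^k\pi_{i,a}\pi_{j,a}$ and $\E_{\mathbf{q}_\pi}\mathbb{I}\{z_j=a\}=\pi_{j,a}$; and (ii) the digamma identity for the Beta family, $\E_{\tbeta(\alpha,\beta)}\log p=\psi(\alpha)-\psi(\alpha+\beta)$ and $\E_{\tbeta(\alpha,\beta)}\log(1-p)=\psi(\beta)-\psi(\alpha+\beta)$, which follows by differentiating the Beta log-normalizing constant in $\alpha$ and $\beta$.

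For the update on $p$: writing $\log B_{z_i,z_j}=\mathbb{I}\{z_i=z_j\}\log p+\mathbb{I}\{z_i\neq z_j\}\log q$ and likewise for $\log(1-B_{z_i,z_j})$, the only terms of $\log\mathbf{p}(A,Z,p,q)$ that involve $p$ are $\big(\sum_{i<j}\mathbb{I}\{z_i=z_j\}A_{i,j}+\alphapr_p-1\big)\log p+\big(\sum_{i<j}\mathbb{I}\{z_i=z_j\}(1-A_{i,j})+\betapr_p-1\big)\log(1-p)$. Taking $\E_{\mathbf{q}_{-p}}$, which here acts only through the $Z$ block, via fact (i) and exponentiating yields a density proportional to $p^{\alpha'_p-1}(1-p)^{\beta'_p-1}$ with the asserted $\alpha'_p,\beta'_p$, i.e.\ a $\tbeta(\alpha'_p,\beta'_p)$ law. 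The update on $q$ is obtained identically after replacing $\mathbb{I}\{z_i=z_j\}$ by $\mathbb{I}\{z_i\neq z_j\}$ and using $\E_{\mathbf{q}_\pi}\mathbb{I}\{z_i\neq z_j\}=\sum_{a\neq b}\pi_{i,a}\pi_{j,b}$, which gives the stated $\alpha'_q,\beta'_q$.

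For the update on $Z_{i,\cdot}$: the terms depending on $z_i$ are the prior term $\log\pipr_{i,z_i}$ together with the $n-1$ likelihood pairs touching node $i$, namely $\sum_{j\neq i}\big[A_{i,j}\log B_{z_i,z_j}+(1-A_{i,j})\log(1-B_{z_i,z_j})\big]$. Fixing $z_i=a$, substituting $\mathbb{I}\{z_i=z_j\}=\mathbb{I}\{z_j=a\}$ and $\mathbb{I}\{z_i\neq z_j\}=1-\mathbb{I}\{z_j=a\}$, and taking $\E_{\mathbf{q}_{-i}}$ over $p$, $q$, and $\{Z_{j,\cdot}\}_{j\neq i}$ with facts (i) and (ii), one arrives at $\log\pipr_{i,a}+\sum_{j\neq i}\pi_{j,a}\big[A_{i,j}C_1+(1-A_{i,j})C_0\big]$ plus a piece independent of $a$, where $C_1=[\psi(\alpha_p)-\psi(\alpha_p+\beta_p)]-[\psi(\alpha_q)-\psi(\alpha_q+\beta_q)]$ and $C_0=[\psi(\beta_p)-\psi(\alpha_p+\beta_p)]-[\psi(\beta_q)-\psi(\alpha_q+\beta_q)]$. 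Dropping the $a$-free piece into the normalization, rewriting $A_{i,j}C_1+(1-A_{i,j})C_0=(C_1-C_0)A_{i,j}+C_0$, and checking that $C_1-C_0=2t$ (the $\psi(\alpha_p+\beta_p)$ and $\psi(\alpha_q+\beta_q)$ terms cancel) while $C_0=-2t\lambda$, with $t,\lambda$ as in (\ref{eqn:t_formula})–(\ref{eqn:lambda_formula}), gives $\pi'_{i,a}\propto\pipr_{i,a}\exp\big[2t\sum_{j\neq i}\pi_{j,a}(A_{i,j}-\lambda)\big]$, the $k$ values being normalized to sum to one.

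Everything above is a direct calculation; the one step that needs care is the last paragraph — correctly separating the part of the $Z_{i,\cdot}$-exponent that depends on $a$ from the part that does not (the off-diagonal ``$q$'' contribution genuinely splits into both), and then matching the surviving combination of digamma values against the parametrization $(t,\lambda)$ of the statement. The $p$- and $q$-updates are routine once the digamma identity (ii) is available.
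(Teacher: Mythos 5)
Your proposal is correct and follows essentially the same route as the paper's proof: identify the terms of the (log) joint/conditional density involving each block, take the expectation under the product measure using $\E_{\mathbf{q}_\pi}\mathbb{I}\{z_i=z_j\}=\sum_a\pi_{i,a}\pi_{j,a}$ and the Beta digamma identities, and match the surviving digamma combinations to the parametrization $(t,\lambda)$. The only cosmetic difference is that the paper groups the $Z_{i,\cdot}$-exponent into $\log\frac{p(1-q)}{q(1-p)}$ and $\log\frac{1-q}{1-p}$ before taking expectations, whereas you keep the coefficients $C_1,C_0$ and recombine afterwards; the algebra is identical.
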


All coordinate updates in Theorem \ref{thm:coor_update} have explicit formulas, which makes CAVI a computationally attractive way to approximate the global optimum $\mathbf{\hat q}^\mf$ for the community detection problem.

\subsection{Batch Coordinate Ascent Variational Inference}\label{sec:BCAVI}
The Batch Coordinate Ascent Variational Inference (BCAVI) is a batch version of CAVI. The difference lies in that CAVI updates the rows of $\pi$ sequentially one by one, while BCAVI uses the value of $\pi$ to update all rows $\{\pi'_{i,\cdot}\}$ according to Theorem \ref{thm:coor_update}. This makes BCAVI especially suitable for parallel and distributed computing, a nice feature for large scale network analysis.

We define a mapping $h:\Pi_1\rightarrow \Pi_1$ as follows. For any $\pi\in\Pi_1$, we have
\begin{align}\label{eqn:h}
[h_{t,\lambda}(\pi)]_{i,a}\propto \pipr_{i,a}\exp\left[2t\sum_{j\neq i}\pi_{ja}\left(A_{i,j}-\lambda\right)\right],
\end{align}
with parameters $t$ and $\lambda$. For BCAVI, we update $\pi$ by $\pi'=h_{t,\lambda}(\pi)$ in each batch iteration, with $t,\lambda$ defined in Equations (\ref{eqn:t_digamma}) and (\ref{eqn:lambda_digamma}). See Algorithm \ref{alg:BCAVI} for the detailed implementation of BCAVI algorithm.

\begin{algorithm}[ht]
\SetAlgoLined
\KwIn{Adjacency matrix $A$, number of communities $k$, hyperparameters $\pipr,\alphapr_p,\betapr_p,\alphapr_q,\betapr_q$, initializer $\pi^{(0)}$, number of iterations $S$.}
\KwOut{Mean variational Bayes approximation $\hat\pi,\hat\alpha_p,\hat\beta_p,\hat\alpha_q,\hat\beta_q$.}
 \For {$s=1,2,\ldots, S$}  {
 \nl Update $\alpha_p^{(s)},\beta_p^{(s)},\alpha_q^{(s)},\beta_q^{(s)}$ by 
 \begin{align}
  &\alpha^{(s)}_p = \alphapr_p + \sum_{a=1}^k\sum_{i<j}A_{i,j}\pi^{(s-1)}_{i,a}\pi^{(s-1)}_{j,a},\beta^{(s)}_p=\betapr_p+\sum_{a=1}^k\sum_{i<j}(1-A_{i,j})\pi^{(s-1)}_{i,a}\pi^{(s-1)}_{j,a},\\
  &\alpha^{(s)}_q = \alphapr_q + \sum_{a\neq b}\sum_{i<j}A_{i,j}\pi^{(s-1)}_{i,a}\pi^{(s-1)}_{j,b},\beta^{(s)}_q=\betapr_q+\sum_{a\neq b}\sum_{i<j}(1-A_{i,j})\pi^{(s-1)}_{i,a}\pi^{(s-1)}_{j,b}.
 \end{align}

\nl Define
\begin{align}
t^{(s)}&=\frac{1}{2}\left[\left[\psi(\alpha^{(s)}_p)-\psi(\beta^{(s)}_p)\right] - \left[\psi(\alpha^{(s)}_q)-\psi(\beta^{(s)}_q)\right]\right]\label{eqn:t_digamma}\\
\lambda^{(s)} &= \frac{1}{2t^{(s)}}\left[\left[\psi(\beta^{(s)}_q)-\psi(\alpha^{(s)}_q+\beta^{(s)}_q)\right]- \left[\psi(\beta^{(s)}_p)-\psi(\alpha^{(s)}_p+\beta^{(s)}_p)\right]\right],\label{eqn:lambda_digamma}
\end{align}
where $\psi(\cdot)$ is the digamma function. Then update $\pi^{(s)}$ with
\begin{align*}
\pi^{(s)}=h_{t^{(s)},\lambda^{(s)}}(\pi^{(s-1)}),
\end{align*} 
where the mapping $h(\cdot)$ is defined as in Equation (\ref{eqn:h}).
 }
 \nl We have $\hat\pi=\pi^{(S)},\hat\alpha_p=\alpha_p^{(S)},\hat\beta_p=\beta_p^{(S)},\hat\alpha_q=\alpha_q^{(S)},\hat\beta_q=\beta_q^{(S)}$.
\caption{Batch Coordinate Ascent Variational Inference (BCAVI)\label{alg:BCAVI}}
\end{algorithm}

\begin{remark}
The definitions of $t^{(s)}$ and $\lambda^{(s)}$ in Equations (\ref{eqn:t_digamma}) and (\ref{eqn:lambda_digamma}) involve the digamma function, which costs a non-negligible computational resources each time called. Note that we have $\psi(x)\in(\log(x-\frac{1}{2}),\log x)$  for all $x>1/2$. For the computational purpose, we propose to use the logarithmic function instead of digamma function in Algorithm \ref{alg:BCAVI}, i.e., Equations (\ref{eqn:t_digamma}) and (\ref{eqn:lambda_digamma}) are replaced by
\begin{align}\label{eqn:t_lambda_log}
t^{(s)}=\frac{1}{2}\log \frac{\alpha^{(s)}_p\beta^{(s)}_q}{\beta^{(s)}_p\alpha^{(s)}_q},\quad\text{and }\lambda^{(s)}=\frac{1}{2t^{(s)}}\log\frac{\beta^{(s)}_q(\alpha^{(s)}_p+\beta^{(s)}_p)}{(\alpha^{(s)}_q+\beta^{(s)}_q)\beta^{(s)}_p}.
\end{align}
Later we show that $\alpha^{(s)}_p,\beta^{(s)}_p,\alpha^{(s)}_q,\beta^{(s)}_q$ are all at least in the order of $np$, which goes to infinity, and thus the error caused by using the logarithmic function to replace the digamma function is negligible. All theoretical guarantees obtained in Section \ref{sec:main_theory} for Algorithm \ref{alg:BCAVI} (i.e., Theorem \ref{thm:mf_iterative}, Theorem \ref{thm:mf_converge}) still hold if we use Equation (\ref{eqn:t_lambda_log}) to replace Equations (\ref{eqn:t_digamma}) and (\ref{eqn:lambda_digamma}).
\end{remark}

\section{Theoretical Justifications}\label{sec:main_theory}
In this section, we establish theoretical justifications for BCAVI for community detection under the Stochastic Block Model. Though $Z$, $p$ and $q$ are all unknown, the main interest of community detection is on the recovery of the assignment matrix $Z$, while $p$ and $q$ are nuisance parameters. As a result, our main focus is on developing convergence rate of BCAVI for $\pi$.

\subsection{Loss Function}
We use $\ell_1$ norm to measure the performance of recovering $Z$. Let $\Phi$ be the set of all the bijections from $[k]$ to $[k]$. Then for any $Z,Z^*\in\Pi_1$, the loss function is defined as
\begin{align}\label{eqn:distance_ell}
\ell(Z,Z^*)&=\inf_{\phi\in\Phi}\norm{Z-\phi\circ Z^*}_{1}=\inf_{\phi\in\Phi}\sum_{i,a}|Z_{i,a}-Z_{i,\phi(a)}^*|.
\end{align}
Note that the infimum over $\Phi$ addresses the issue of identifiability over the labels. For instance, in the case of $n=4,k=2$, the assignment vector $z=(1,1,2,2)$ and $z'=(2,2,1,1)$ give the same partition. In Equation (\ref{eqn:distance_ell}) two equivalent assignments give the same loss.  

There are a few reasons for the choise of the $\ell_1$ norm. When both $Z,Z'\in\Pi_0$, the $\ell_1$ distance between $Z$ and $Z'$ is equal to the $\ell_0$ norm, i.e., the Hamming distance between the corresponding assignment vectors $r^{-1}(Z)$ and $r^{-1}(Z')$, which is the default metric used in community detection literature \cite{gao2015achieving,zhang2016minimax}. The other reason is related to the interpretation of $\Pi_1$. Since each row of $\Pi_1$ corresponds to a categorical distribution, it is natural to use the $\ell_1$ norm, the total variation distance, to measure their diffidence.

\subsection{Ground Truth}\label{sec:ground}
We use the superscript asterisk $(^*)$ to indicate the ground truth. The ground truth of connectivity matrix $B^*$ is
\begin{align*}
B^*=q^*1_k1_k^T+(p^*-q^*)I_k,
\end{align*}
where $p^*$ is the within community connection probability and $q^*$ is the between community connection probability. Throughout the paper, we assume $p^*>q^*$ such that the network satisfies the so-called ``assortative'' property, with the within-community connectivity probability larger than the between-community connectivity probability. 

We further assume the network is generated by the true assignment matrix $\Ztruth$ in the sense that $P_{i,j}= (\Ztruth B^*\Ztrutht)_{i,j}$ for all $i\neq j$. We are interested in deriving a statistical guarantee of $\ell(\hat\pi^{(s)},\Ztruth )$. Throughout this section we consider cases $\Ztruth \in\Pi_0$ or $\Ztruth \in\Pi_0^{(\rho,\rho')}$, where $\Pi_0^{(\rho,\rho')}$ is defined to be a subset of $\Pi_0$ with all the community sizes bounded between $\rho n/k$ and $\rho' n/k$. That is,
\begin{align*}
\Pi_0^{(\rho,\rho')} = \{\pi\in\Pi_0:\rho n/k \leq|\{i\in[n]:\pi_{i,a}=1\}|\leq\rho'n/k,\forall a\in[k]\}.
\end{align*}
It is worth mentioning that $\rho,\rho'$ are not necessarily constants. We allow the community sizes not to be of the same order in the theoretical analysis.

\subsection{Theoretical Justifications for BCAVI}\label{sec:theory_BCAVI}
In Theorem \ref{thm:mf_iterative}, we present theoretic guarantees of the convergence rate of BCAVI when initialized properly. Define
\begin{align*}
w=\max_{i\in[n]}\max_{a,b\in[k]}\pipr_{i,a}/\pipr_{i,b},\text{ and }\bar n_\textmin=\min_{a\neq b}[n_a+n_b]/2.
\end{align*}
When $w=1$, the priors for $\{r^{-1}(Z_{i,\cdot})\}_{i=1}^n$ are i.i.d. $\tcate(1/k,1/k,\ldots,1/k)$ and $\bar n_\textmin=n/2$ when there exist only two communities. The following quantity $I$ plays a key role in the minimax theory \cite{zhang2016minimax} 
\begin{align*}
I=-2\log \left[\sqrt{p^*q^* }+\sqrt{(1- p^*)(1- q^*)}\right],
\end{align*}
which is the R\'{e}nyi divergence of order $1/2$ between two Bernoulli distributions: $\Ber(p^*)$ and $\Ber(q^*)$. The proof of Theorem \ref{thm:mf_iterative} is deferred to Section \ref{sec:proof_mf_iterative}.

\begin{theorem}\label{thm:mf_iterative}
Let $\Ztruth \in\Pi_0$. Let $0<c_0<1$ be any constant. Assume $0<c_0p^*<q^*<p^*=o_n(1)$,
\begin{align}\label{eqn:assumption_nI}
nI/[ wk [n/\bar n_\textmin ]^2]\rightarrow\infty,\text{ and }&\alphapr_p,\betapr_p,\alphapr_q,\betapr_q=o_n((p^*-q^*)n^2/k).
\end{align}
Under the assumption that the initializer $\pi^{(0)}$ satisfies $\ell(\pi^{(0)},\Ztruth )\leq c_\tinit \bar n_\textmin$ for some sufficiently small constant $c_\tinit$ with probability  at least $1-\epsilon$, there exist some constant $c>0$ and some $\eta=o_n(1)$ such that in each iteration for the BCAVI algorithm, we have
\begin{align*}
\ell(\pi^{(s+1)},\Ztruth )\leq n\exp(-(1-\eta)\bar n_\textmin I) + \frac{\ell(\pi^{(s)},\Ztruth )}{\sqrt{nI/[wk[n/\bar n_\textmin]^2]}},\forall s\geq 0,
\end{align*}
holds uniformly with probability at least $1-\exp[-(\bar n_\textmin I)^\frac{1}{2}]-n^{-c}-\epsilon$.
\end{theorem}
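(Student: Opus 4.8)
The plan is to establish the one-step inequality for a generic iteration and then close an induction guaranteeing that $\pi^{(s)}$ stays in the region $\ell(\pi^{(s)},\Ztruth)\le c_\tinit\bar n_\textmin$ for all $s\ge 0$. After fixing the loss-minimizing label permutation (which for $\pi$ within $c_\tinit\bar n_\textmin$ of $\Ztruth$ may be taken to be the identity), write $a=z_i^*$. Since each row of $h_{t,\lambda}(\pi)$ is a probability vector, node $i$ contributes $2\bigl(1-[h_{t,\lambda}(\pi)]_{i,a}\bigr)$ to the loss, so it suffices to bound $\sum_i\bigl(1-[h_{t,\lambda}(\pi^{(s)})]_{i,a}\bigr)$. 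From the softmax form (\ref{eqn:h}) and the elementary inequality $\tfrac{w}{w+e^x}\le\sqrt w\,e^{-x/2}$ one gets
\begin{align*}
1-[h_{t,\lambda}(\pi)]_{i,a}\;\le\;\sqrt w\sum_{b\neq a}\exp\Bigl[t\sum_{j\neq i}(\pi_{j,b}-\pi_{j,a})(A_{i,j}-\lambda)\Bigr].
\end{align*}
This step is crucial: it yields the exponent coefficient $t$, not $2t$, and $t^{(s)}$ turns out to be precisely the R\'enyi-$\tfrac12$-optimal tilting parameter. I then split the exponent into an oracle part $F_{i,b}=\sum_{j\neq i}(Z^*_{j,b}-Z^*_{j,a})(A_{i,j}-\lambda)$ and a perturbation part $G_{i,b}=\sum_{j\neq i}\bigl[(\pi_{j,b}-Z^*_{j,b})-(\pi_{j,a}-Z^*_{j,a})\bigr](A_{i,j}-\lambda)$.

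First I would pin down $t^{(s)},\lambda^{(s)}$. By Bernstein's inequality the within- and between-community edge counts concentrate; replacing $\Ztruth$ by $\pi^{(s-1)}$ perturbs the relevant sums by $O(\ell(\pi^{(s-1)},\Ztruth)\,np^*)=o_n(n^2p^*/k)$; combined with $\alphapr_p,\betapr_p,\alphapr_q,\betapr_q=o_n((p^*-q^*)n^2/k)$ this gives $\alpha_p^{(s)},\beta_p^{(s)},\alpha_q^{(s)},\beta_q^{(s)}\gtrsim n^2p^*/k\to\infty$. Hence, via $\psi(x)=\log x+O(1/x)$ (or directly if the logarithmic surrogate (\ref{eqn:t_lambda_log}) is used), $t^{(s)}=\tfrac12\log\tfrac{p^*(1-q^*)}{q^*(1-p^*)}(1+o_n(1))$ and $\lambda^{(s)}=\bigl(\log\tfrac{1-q^*}{1-p^*}\bigr)\big/\bigl(\log\tfrac{p^*(1-q^*)}{q^*(1-p^*)}\bigr)(1+o_n(1))\in(q^*,p^*)$. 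The point of this $\lambda^{(s)}$ is that for $t$ near $t^{(s)}$ both Bernoulli moment generating functions $\E e^{t(A-\lambda)}$ (for $A\sim\Ber(q^*)$) and $\E e^{-t(A-\lambda)}$ (for $A\sim\Ber(p^*)$) equal $e^{-I/2}(1+o_n(1))$: the product of the two equals $(\sqrt{p^*q^*}+\sqrt{(1-p^*)(1-q^*)})^2=e^{-I}$ for \emph{every} $\lambda$, and $\lambda^{(s)}$ is the value balancing the two factors. Consequently $\E\exp[tF_{i,b}]=(\E e^{t(A-\lambda)})^{n_b}(\E e^{-t(A-\lambda)})^{n_a-1}\le e^{-(1-o_n(1))\bar n_\textmin I}$ uniformly for $(t,\lambda)$ in the indicated deterministic ranges, the number of factors being $n_a+n_b-1\ge 2\bar n_\textmin-1$.

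For the oracle contribution, take expectations and apply a plain Markov inequality (no independence needed): $\E\sum_i\sum_{b\neq a}\sqrt w\,e^{tF_{i,b}}\le n\sqrt w\,k\,e^{-(1-o_n(1))\bar n_\textmin I}$, so $\sum_{i,b}\sqrt w\,e^{tF_{i,b}}>n\,e^{-(1-\eta)\bar n_\textmin I}$ fails with probability $\le\sqrt w\,k\,e^{-\eta\bar n_\textmin I}\le\exp[-(\bar n_\textmin I)^{1/2}]$, which fixes $\eta\asymp(\bar n_\textmin I)^{-1/2}+\log(wk)/(\bar n_\textmin I)=o_n(1)$ --- this is where (\ref{eqn:assumption_nI}) enters, as it forces $\bar n_\textmin I\to\infty$ and $\log(wk)=o_n(\bar n_\textmin I)$; the data-dependence of $t^{(s)},\lambda^{(s)}$ is absorbed by a monotonicity argument or a union bound over a fine grid. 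For the perturbation contribution, bound $e^{t^{(s)}G_{i,b}}\le 1+t^{(s)}|G_{i,b}|e^{t^{(s)}|G_{i,b}|}$, split $G_{i,b}$ into its mean $\sum_j\delta'_j(P_{i,j}-\lambda^{(s)})$ (absolutely $\le p^*\sum_j|\delta'_j|$) and its fluctuation $\sum_j\delta'_j(A_{i,j}-P_{i,j})$ (controlled via $\opnorm{A-P}\lesssim\sqrt{np^*}$ on an event of probability $\ge 1-n^{-c}$), with $\delta'=\pi^{(s)}-Z^*$; summing over $i,b$ and using $\sum_j\|\pi^{(s)}_{j,\cdot}-Z^*_{j,\cdot}\|_1=\ell(\pi^{(s)},\Ztruth)$ produces a term of order $t^{(s)}\sqrt{np^*}\,\ell(\pi^{(s)},\Ztruth)/\bar n_\textmin$, which via $I\asymp(\sqrt{p^*}-\sqrt{q^*})^2$ and $t^{(s)}\asymp\log(p^*/q^*)$ is at most $\ell(\pi^{(s)},\Ztruth)/\sqrt{nI/[wk(n/\bar n_\textmin)^2]}$. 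Adding the pieces gives the one-step inequality; its right-hand side is $\le\tfrac12 c_\tinit\bar n_\textmin+o_n(1)\,c_\tinit\bar n_\textmin\le c_\tinit\bar n_\textmin$, so induction keeps all $\pi^{(s)}$ in the error ball (hence the estimates of $t^{(s)},\lambda^{(s)}$ remain valid) and the bound holds uniformly in $s$; the failure probabilities total $\exp[-(\bar n_\textmin I)^{1/2}]+n^{-c}+\epsilon$.

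The main obstacle is obtaining the sharp oracle exponent together with a perturbation term that is genuinely linear in $\ell(\pi^{(s)},\Ztruth)$ and uniform over the whole error ball. On the oracle side, $\lambda^{(s)}$ only approximately balances the two moment generating functions and the communities $n_a,n_b$ may differ in size, so one must show each Bernoulli MGF stays within a factor $e^{o_n(\bar n_\textmin I)}$ of $e^{-I/2}$ and then trade the leftover $wk$ (and the grid/monotonicity loss) against $e^{\eta\bar n_\textmin I}$ --- this is what dictates the precise form of (\ref{eqn:assumption_nI}) and the unusual probability $\exp[-(\bar n_\textmin I)^{1/2}]$. On the perturbation side, the crude per-node bound $|G_{i,b}|\lesssim\ell(\pi^{(s)},\Ztruth)$ is far too lossy; one genuinely needs the cancellation in $\sum_j\delta'_j(A_{i,j}-P_{i,j})$, i.e.\ uniform operator-norm or row-wise control of $A-P$, to keep the contraction coefficient at $o_n(1)$.
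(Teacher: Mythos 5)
Your overall architecture (one\hyphenation{step}-step contraction $+$ concentration of $t^{(s)},\lambda^{(s)}$ $+$ induction, with the exponent split into an oracle part $F_{i,b}$ and a perturbation part $G_{i,b}$, the oracle handled by the R\'enyi-$1/2$ balancing identity) matches the paper's, but the step that actually produces the contraction term is missing. After writing $e^{t(F_{i,b}+G_{i,b})}$ you bound $e^{tG}\le 1+t|G|e^{t|G|}$ and control $\sum_i|G_{i,b}|$ through $\opnorm{A-\E A}\lesssim\sqrt{np^*}$; but that first-power bound gives $\sum_i|[(A-\E A)\theta]_i|\le\sqrt n\,\opnorm{A-\E A}\,\norm{\theta}_2\lesssim n\sqrt{p^*}\sqrt{\norm{\theta}_1}$, which scales like $\sqrt{\ell(\pi^{(s)},\Ztruth)}$, not linearly in $\ell$; the factor $e^{t|G_{i,b}|}$ is unbounded (its mean part alone is of order $c_\tinit\bar n_\textmin I$ when $\ell\asymp\bar n_\textmin$); and the cross term $e^{tF_{i,b}}\cdot t|G_{i,b}|e^{t|G_{i,b}|}$ couples two dependent functions of the same row $A_{i,\cdot}$, so you can neither take expectations factor-wise nor use Cauchy--Schwarz (doubling the tilt to $2t^*$ destroys the $e^{-I}$ gain entirely). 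Your claimed intermediate bound $t\sqrt{np^*}\,\ell/\bar n_\textmin$ — in particular the $1/\bar n_\textmin$ — does not follow from the steps you list; in the paper that factor comes from the threshold $\bar n_\textmin I/(4mt)$ created by the stepwise approximation of $1\wedge e^{-x}$, after which the deviation term is counted by a Chebyshev-type second-moment bound $\#\{i:(A_{i,\cdot}-\E A_{i,\cdot})\theta\ge\tau\}\le\opnorm{A-\E A}^2\norm{\theta}_2^2/\tau^2\lesssim np^*\norm{\theta}_1/\tau^2$, which is exactly what makes the bound linear in $\ell$ with an $o(1)$ coefficient. Discarding the indicator/threshold structure also costs you the easy uniformity over the data-dependent $(t^{(s)},\lambda^{(s)})$ (monotone thresholds can be replaced by deterministic worst cases; a grid union bound over $e^{tF}$ does not obviously survive when $\bar n_\textmin I$ grows slowly), and it hides the sparse regime $np^*=o(\log n)$, allowed under (\ref{eqn:assumption_nI}), where $\opnorm{A-\E A}\lesssim\sqrt{np^*}$ is simply false and the paper must trim high-degree nodes.

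A second genuine gap is the precision of the parameter step. For the MGF balancing you yourself require $|\lambda^{(s)}-\lambda^*|\lesssim\eta'(p^*-q^*)$ and $|t^{(s)}-t^*|\lesssim\eta'(p^*-q^*)/p^*$, but your argument (Bernstein for edge counts plus a perturbation of size $O(\ell\,np^*)=o(n^2p^*/k)$) only delivers relative accuracy $o(1)$ on the $p^*$ scale; since the theorem allows $q^*/p^*\to1$, accuracy $o(p^*)$ is far weaker than accuracy $o(p^*-q^*)$, and with $\ell\asymp c_\tinit\bar n_\textmin$ the crude $O(\ell np^*)$ bound needs $c_\tinit\lesssim(p^*-q^*)/p^*$, which fails. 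The paper gets the right scale by splitting the replacement error into a mean part that gains a factor $(p^*-q^*)$ through the orthogonality of $\Ztruth\Ztrutht$ and $1_n1_n^T-\Ztruth\Ztrutht$, and a fluctuation part bounded uniformly over all $\pi\in\Pi_1$ by a Grothendieck-type inequality, $|\langle A-\E A,\pi\pi^T\rangle|\le6n\sqrt{np^*}$. Finally, your induction asserts the sharp $(1-\eta)$ exponent uniformly on the constant-radius ball $\ell\le c_\tinit\bar n_\textmin$, where the bias term contributes $e^{O(c_\tinit\bar n_\textmin I)}$ and only a $(1-O(c_\tinit))$ exponent is attainable; the paper handles this by proving a cruder one-step bound in that regime (exponent $\bar n_\textmin I/25$), absorbing it into the linear term, and only then running the sharp argument inside an $o(\bar n_\textmin)$ ball. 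These are fixable, but as written the proposal does not establish the theorem.
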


Theorem \ref{thm:mf_iterative} establishes a linear convergence rate for BCAVI algorithm. The coefficient  $[nI/[wk[n/\bar n_\textmin]^2]]^{-1/2}$ is independence of $s$, and goes to 0 when $n$ grows. The following theorem is an immediate consequence of Theorem \ref{thm:mf_iterative}.


\begin{theorem}\label{thm:mf_converge}
Under the same condition as in Theorem \ref{thm:mf_iterative}, for any $s\geq s_0\triangleq [nI/k]/\log[nI/[wk[n/\bar n_\textmin]^2]]$, we have
\begin{align*}
\ell(\hat\pi^{(s)},\Ztruth )\leq n\exp(-(1-2\eta)\bar n_\textmin I)\leq\begin{cases}
n\exp(-(1-o(1))\rho nI/k),k\geq 3;\\
n\exp(-(1-o(1))nI/2),k=2,
\end{cases}
\end{align*}
with probability at least $1-\exp[-(\bar n_\textmin I)^\frac{1}{2}]-n^{-c}-\epsilon$.
\end{theorem}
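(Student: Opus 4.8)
This statement is an immediate corollary of Theorem~\ref{thm:mf_iterative}, so the plan is to unroll the one-step contraction that it provides. Abbreviate the contraction coefficient by $\rho_n := \left[ nI/(wk[n/\bar n_\textmin]^2) \right]^{-1/2}$ and the additive error by $E_n := n\exp(-(1-\eta)\bar n_\textmin I)$. On the event of probability at least $1-\exp[-(\bar n_\textmin I)^{1/2}]-n^{-c}-\epsilon$ supplied by Theorem~\ref{thm:mf_iterative}, the bound $\ell(\pi^{(s+1)},\Ztruth)\le E_n+\rho_n\,\ell(\pi^{(s)},\Ztruth)$ holds simultaneously for every $s\ge 0$; iterating it $s$ times and summing the resulting geometric series yields
\begin{align*}
\ell(\pi^{(s)},\Ztruth) \le \frac{E_n}{1-\rho_n} + \rho_n^{\,s}\,\ell(\pi^{(0)},\Ztruth).
\end{align*}
Everything then reduces to showing that both terms on the right are at most $n\exp(-(1-2\eta)\bar n_\textmin I)$ once $s\ge s_0$.

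For the steady-state term, assumption~\eqref{eqn:assumption_nI} forces $nI/(wk[n/\bar n_\textmin]^2)\to\infty$, hence $\rho_n=o_n(1)$ and $E_n/(1-\rho_n)=(1+o_n(1))E_n$; the vanishing multiplicative factor is absorbed into the exponent at the cost of replacing $\eta$ by $\tfrac{3}{2}\eta$, which is still $o_n(1)$. For the transient term I would use the crude initialization bound $\ell(\pi^{(0)},\Ztruth)\le c_\tinit\bar n_\textmin\le n$ together with $\log(1/\rho_n)=\tfrac{1}{2}\log[nI/(wk[n/\bar n_\textmin]^2)]$, so that for $s\ge s_0$,
\begin{align*}
\rho_n^{\,s}\,\ell(\pi^{(0)},\Ztruth) \le n\exp\big(-s_0\log(1/\rho_n)\big) = n\exp\Big(-\tfrac{nI}{2k}\Big),
\end{align*}
which I would then compare against $n\exp(-(1-\tfrac{3}{2}\eta)\bar n_\textmin I)$ using the size bounds on $\bar n_\textmin$. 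Adding the two contributions gives $\ell(\pi^{(s)},\Ztruth)\le n\exp(-(1-2\eta)\bar n_\textmin I)$ for all $s\ge s_0$ on the same event. Finally, the two-case bound is read off from $\bar n_\textmin=\min_{a\ne b}(n_a+n_b)/2$: when $k=2$ the only pair of labels is $\{1,2\}$ and $n_1+n_2=n$, so $\bar n_\textmin=n/2$ exactly; when $k\ge 3$ and the community sizes are bounded below by $\rho n/k$ we have $\bar n_\textmin\ge\rho n/k$.

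The step I expect to require the most care is not the unrolling but the honest bookkeeping of the $o_n(1)$ quantities: one must verify that the stated $s_0$ is large enough to push $\rho_n^{\,s_0}\ell(\pi^{(0)},\Ztruth)$ below the error floor $E_n$ while retaining the \emph{same} exponential rate $(1-2\eta)\bar n_\textmin I$ rather than a strictly worse one. This forces one to use the explicit forms of $\eta$ and of the contraction coefficient produced inside the proof of Theorem~\ref{thm:mf_iterative}, together with $\bar n_\textmin I\to\infty$, rather than the schematic $o_n(1)$ appearing in its statement; every remaining step is routine.
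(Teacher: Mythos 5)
Your overall route --- unrolling the one-step bound of Theorem \ref{thm:mf_iterative}, summing the geometric series, and splitting the result into a steady-state term and a transient term --- is exactly the argument the paper intends (it gives no separate proof, presenting the theorem as an immediate consequence of Theorem \ref{thm:mf_iterative}), and your handling of the steady-state term (absorbing the $1/(1-\rho_n)$ factor into the exponent, using $\rho_n=o_n(1)$ and $\bar n_\textmin I\to\infty$) and of the two-case simplification via $\bar n_\textmin=n/2$ for $k=2$ and $\bar n_\textmin\ge\rho n/k$ for $k\ge 3$ is correct.

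The gap is in the one step you deferred: the comparison of the transient term with the target rate. From $s_0\log(1/\rho_n)=nI/(2k)$ you correctly get $\rho_n^{\,s_0}\ell(\pi^{(0)},\Ztruth)\le n\exp(-nI/(2k))$, but you then assert this is at most $n\exp(-(1-2\eta)\bar n_\textmin I)$ ``using the size bounds on $\bar n_\textmin$.'' The size bound goes the wrong way: one always has $\bar n_\textmin\le n/k$ (the two smallest communities sum to at most $2n/k$), with near-equality for balanced communities, so the required inequality $nI/(2k)\ge(1-2\eta)\bar n_\textmin I$ forces $\bar n_\textmin\le(1+o(1))n/(2k)$ and fails precisely in the balanced case, where at $s=s_0$ the transient term exceeds the claimed rate by a factor of order $\exp\left[(\tfrac12-2\eta)nI/k\right]\to\infty$. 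The lost factor of two comes from $\log(1/\rho_n)=\tfrac12\log[nI/(wk[n/\bar n_\textmin]^2)]$: the honest conclusion of the unrolling is that the stated bound holds for all $s\ge 2(1-2\eta)\bar n_\textmin I/\log[nI/(wk[n/\bar n_\textmin]^2)]$, which is roughly $2s_0$ when $\bar n_\textmin\asymp n/k$, while the literal $s_0$ of the statement suffices only when $\bar n_\textmin\le(1+o(1))n/(2k)$. This discrepancy traces back to the paper's own definition of $s_0$ rather than to your strategy, but it is not the ``$o_n(1)$ bookkeeping'' you flagged --- it is a constant-factor mismatch in the leading exponent, and a complete write-up must either prove the bound for $s\gtrsim 2s_0$ (which leaves the qualitative ``minimax rate within $\log n$ iterations'' message intact) or redefine $s_0$ with the extra factor $2$ (equivalently, replace $nI/k$ by $2\bar n_\textmin I$ in its definition).
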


Theorem \ref{thm:mf_converge} shows that BCAVI provably attains the statistical optimality from the minimax lower bound in Theorem \ref{thm:minimax} after at most $s_0$ iterations. When the network is sparse, i.e., $p^*$ and $q^*$ are at most in an order of $(\log n)/n$, the quantity $s_0$ can be shown to be $o(\log n)$, and then BCAVI converges to be minimax rate within $\log n$ iterations. When the network is dense, i.e., $p^*$ and $q^*$ are far bigger than $(\log n)/n$, $\log n$ iterations are not enough to attain the minimax rate. However, $\ell(\pi^{(s)},\Ztruth )=o(n^{-a})$ for any $a>0$ when $s\geq \log n$, and thus all the nodes can be correctly clustered with high probability by clustering each note to a community with the highest assignment probability. Therefore, it is enough to pick the number of iterations to be $\log n$ in implementing BCAVI.

\begin{theorem}\label{thm:minimax}
Under the assumption $nI/(k\log k)\rightarrow\infty$, we have 
\begin{align*}
\inf_{\hat\pi}\sup_{\Ztruth \in\Pi_0^{(\rho,\rho')}}\E \ell(\hat\pi,\Ztruth )\geq 
\begin{cases}
n\exp(-(1-o(1))\rho nI/k),k\geq 3;\\
n\exp(-(1-o(1))nI/2),k=2,
\end{cases}.
\end{align*}
\end{theorem}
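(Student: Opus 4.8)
\noindent\emph{Proof plan.} I would follow the standard route to sharp minimax lower bounds for the stochastic block model (cf.\ \cite{zhang2016minimax,gao2015achieving}): reduce global recovery to a family of per-node binary tests, and control each test by a \emph{sharp} Chernoff-type converse whose exponent is exactly the order-$1/2$ R\'enyi divergence $I$.

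\textbf{Reduction to a sub-family and to node-wise errors.} First I would bound the minimax risk below by a Bayes risk over a finite sub-family $\mathcal{Z}\subseteq\Pi_0^{(\rho,\rho')}$. I fix a reference assignment whose sizes realize the extremal case: for $k=2$ there is a single pair, so necessarily $\bar n_\textmin=(1+o(1))n/2$; for $k\ge3$ one can choose the configuration so that $\bar n_\textmin=(1+o(1))\rho n/k$ and a constant fraction of nodes lie in communities whose smallest ``competitor'' community also has size $\rho n/k$. The family $\mathcal{Z}$ is generated by independently re-labelling each such ``ambiguous'' node between the two equal-sized communities of the relevant pair, all other labels fixed. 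Two observations then localize the problem: (i) on $\Pi_0$, $\ell(Z,Z')$ equals twice the number of nodes misclassified by the optimal relabelling, and any estimator $\hat\pi$ rounds coordinatewise to $\hat Z\in\Pi_0$ with $\ell(\hat Z,\Ztruth)\le 2\,\ell(\hat\pi,\Ztruth)$, so it suffices to lower bound $\inf_{\hat Z\in\Pi_0}\frac1{|\mathcal Z|}\sum_{Z\in\mathcal Z}\E_Z\ell(\hat Z,Z)$; (ii) since the large communities are shared by every member of $\mathcal Z$, the relabelling $\phi$ in the definition of $\ell$ is, on a high-probability event, pinned down, so this average risk is at least $\sum_{i\text{ ambiguous}}\inf_{\hat z_i}\p_i(\hat z_i\ne z_i)$, the Bayes error of recovering $z_i$ from $A$ under a uniform prior on its two candidate labels.

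\textbf{Per-node testing lower bound.} For an ambiguous node $i$, conditioning on the remaining labels turns recovery of $z_i$ into testing two product Bernoulli laws that differ only on the $n_a+n_b-1=(2+o(1))\bar n_\textmin$ edges joining $i$ to the two relevant communities: one hypothesis puts $\Ber(p^*)$ resp.\ $\Ber(q^*)$ on them, the other swaps these. I would bound the Bayes error below by $\tfrac14\exp(-(1+o(1))D)$, where $D$ is the Chernoff exponent of the two product laws; because the two communities are of equal size, the Chernoff-optimal tilt is asymptotically $t=\tfrac12$, and a one-coordinate computation gives $D=(1+o(1))(n_a+n_b)\bigl(-\log[\sqrt{p^*q^*}+\sqrt{(1-p^*)(1-q^*)}]\bigr)=(1+o(1))\bar n_\textmin I$. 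The matching lower bound on the Bayes error comes from the usual change-of-measure to the Chernoff-balanced law, under which the log-likelihood ratio is a centred sum of bounded independent terms, plus a Berry--Esseen / second-moment estimate showing it sits within an $O(\sqrt D)$ window of $0$ with constant probability; the hypothesis $nI/(k\log k)\to\infty$ forces $D\to\infty$, so the $O(\sqrt D)$ corrections get absorbed in the $o(1)$ in the exponent. Summing over the $\Theta(n)$ ambiguous nodes yields $\inf_{\hat\pi}\sup_{\Ztruth}\E\,\ell(\hat\pi,\Ztruth)\ge c\,n\exp(-(1+o(1))\bar n_\textmin I)$, and plugging in $\bar n_\textmin=(1+o(1))\rho n/k$ for $k\ge3$ and $\bar n_\textmin=(1+o(1))n/2$ for $k=2$ gives the two displayed bounds, with the constant $c$ and polynomial prefactors swallowed by the $o(1)$ in the exponential rate.

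\textbf{Main obstacle.} The delicate step is the sharp constant in the testing converse: the crude Le Cam / Hellinger-affinity bound only gives exponent $2\bar n_\textmin I$, off by a factor of two, so one must run the Chernoff-tilting argument and check it survives the sparse, non-identically-distributed regime $p^*,q^*\to0$ in which the tilted coordinates are heterogeneous (and the optimal tilt is $\tfrac12$ only thanks to the equal-size competitor). Minor nuisances are the $\Pi_1$-to-$\Pi_0$ rounding, the elimination of $\phi$ from $\ell$, and the combinatorial check that $\Pi_0^{(\rho,\rho')}$ admits a reference configuration with $\Theta(n)$ minimal-size nodes. As a shortcut one may instead invoke the Hamming-loss minimax lower bound of \cite{zhang2016minimax} over $\Pi_0^{(\rho,\rho')}$ and combine it with the rounding inequality in (i); I would nonetheless give the self-contained argument above.
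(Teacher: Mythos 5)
Your proposal is correct and follows essentially the same route as the paper: the paper omits the proof, observing that the bound is a routine extension of the Hamming-loss minimax lower bound of \cite{zhang2016minimax} to the $\ell_1$ loss, which is exactly the shortcut you mention (round $\hat\pi$ to $\Pi_0$, where $\ell$ is twice the Hamming distance up to label permutation). Your self-contained sketch---reduction to node-wise two-point testing and the sharp Chernoff converse with exponent $(1+o(1))\bar n_\textmin I$---is precisely the argument underlying that reference, so nothing beyond the constant-factor and combinatorial bookkeeping you already flag is needed.
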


Theorem \ref{thm:minimax} gives the minimax lower bound for community detection problems with respect to the $\ell(\cdot,\cdot)$ loss. In Theorem \ref{thm:mf_converge}, under the additional assumption that $\Ztruth \in\Pi_0^{(\rho,\rho')}$, it immediately reveals that BCAVI converges to the minimax rate after $s_0$ iterations. As a consequence, BCAVI is not only computationally efficient, but also achieves statistical optimality. The minimax lower bound in Theorem \ref{thm:minimax} is almost identical to the minimaxity established in \cite{zhang2016minimax}. The only difference is that \cite{zhang2016minimax} consider a $\ell_0$ loss function. The proof of Theorem \ref{thm:minimax} is just a routine extension of that in \cite{zhang2016minimax}. Therefore, we omit the proof.

To help understand Theorem \ref{thm:mf_iterative}, we add a remark on conditions on model parameters and priors, and a remark on initialization.

~\\
\noindent \textit{Remark 1 (Conditions on model parameters and priors).} The community sizes are not necessarily of the same order in Theorem \ref{thm:mf_iterative}. If we further assume $\rho,\rho'$ are constants, and the prior $\pipr_{i,a}\asymp 1/k,\forall i\in[n],a\in[k]$ (for example, uniform prior), and then the first condition in Equation (\ref{eqn:assumption_nI}) is equivalent to 
\begin{align*}
nI/k^3\rightarrow\infty,
\end{align*}
noting that $n/\bar n_\textmin \asymp k$ and $w\asymp 1$. This condition is necessary for consistent community detection \cite{zhang2016minimax} when $k$ is finite. The assumptions in Equation (\ref{eqn:assumption_nI}) is slightly stronger than the assumption in \cite{lu2016statistical}, which is essentially $nI\geq C k^2\log k$ for a sufficient large constant $C$.

Under the assumption $nI/k^3\rightarrow\infty$, since we have $I\asymp (p^*-q^*)^2/p^*$, it can be shown that $p^*,q^*$ are far bigger than $ n^{-1}$, and then the second part of Equation (\ref{eqn:assumption_nI}) can also be easily satisfied. For instance, we can simply set $\alphapr_p,\betapr_p,\alphapr_q,\betapr_q$ all equals to 1, i.e., consider non-informative priors.

~\\
\noindent \textit{Remark 2 (Initialization).} The requirement on the initializers for BCAVI in Theorem \ref{thm:mf_iterative} is relatively weak. When $k$ is a constant and the community sizes are of the same order, the condition needed is $\ell(\pi^{(0)},Z^*)\leq cn$ for some small constant $c$. Many existing methodologies in community detection literature can be used. One popular choice is spectral clustering. Established in \cite{lei2015consistency, gao2015achieving, chin2015stochastic}, the spectral clustering has a  mis-clustering error bound as $\mathcal{O}(k^2/I)$. From Equation (\ref{eqn:assumption_nI}), the error is $o(\bar n_\textmin)$, and then the condition that Theorem \ref{thm:mf_iterative} requires for initialization is satisfied. The semidefinite programming (SDP), another popular method for community detection, also enjoys satisfactory theoretical guarantees \cite{guedon2016community, fei2017exponential}, and is suitable as an initializer.

\section{Discussion}\label{sec:diss}
\subsection{Statistical Guarantee of Global Minimizer}
Though it is often challenging to obtain the global minimizer of the mean field method, it is still interesting to understand the statistical property of the global minimizer $\hat\pi^\mf$. Assume that both $p^*$ and $q^*$ are known, the optimization problem stated in Theorem \ref{thm:MF_simplifed} can be further simplified. The posterior distribution becomes $\mathbf{p}(Z|A)$. We use a product measure $\mathbf{q}_{\pi}(Z)=\prod_i \mathbf{q}_i(\pi_{i,\cdot})$ for approximation, and then $\hat\pi^\mf =\argmin_{\pi\in\Pi_1}\kl[\mathbf{q}_{\pi}(Z)\|\mathbf{p}(Z|A)]$. Theorem \ref{thm:global} reveals that $\hat\pi^\mf$ is rate-optimal, not surprisingly given the theoretical results obtained for BCAVI, an approximation of $\hat\pi^\mf$.

\begin{theorem}\label{thm:global}
Assume $p^*$ and $q^*$ are known. Under the assumption $\rho nI/[wk^2[n/\bar n_\textmin]^2]\rightarrow\infty$, there exist some constant $c>0$ and $\eta=o_n(1)$ such that 
\begin{align*}
\ell(\hat\pi^\mf ,\Ztruth )\leq n\exp(-(1-\eta)\bar n_\textmin I)
\end{align*}
with probability at least $1-\exp[-(\bar n_\textmin I)^\frac{1}{2}] -n ^{-c}$.
\end{theorem}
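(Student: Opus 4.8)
The plan is to treat $\hat\pi^\mf$ as a fixed point of the one-step BCAVI map when $p^*,q^*$ are known, so that $t$ and $\lambda$ are deterministic with $2t = \log\frac{p^*(1-q^*)}{q^*(1-p^*)}$ and $\lambda$ the corresponding threshold, and then reuse the one-step contraction analysis from the proof of Theorem \ref{thm:mf_iterative}. The key observation is that, since $p^*$ and $q^*$ are known, the objective $f$ from Theorem \ref{thm:MF_simplifed} reduces (up to additive constants and the $\kl$ penalty against $\pipr$) to $2t\langle A-\lambda 1_n 1_n^T + \lambda I_n,\pi\pi^T\rangle$, and any global minimizer $\hat\pi^\mf$ must satisfy the coordinate stationarity condition; in particular, each row of $\hat\pi^\mf$ is a categorical distribution that is its own CAVI update given the other rows, i.e. $\hat\pi^\mf = h_{t,\lambda}(\hat\pi^\mf)$ with the deterministic $t,\lambda$. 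So the first step is to establish this fixed-point characterization rigorously (it follows because $f$ with known $p^*,q^*$ is row-wise concave in each $\pi_{i,\cdot}$ given the others, so a global max is a coordinatewise max, hence a fixed point of $h$).

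Second, I would import the deterministic event from the analysis of Theorem \ref{thm:mf_iterative}: on an event of probability at least $1-\exp[-(\bar n_\textmin I)^{1/2}]-n^{-c}$, the one-step map $h_{t,\lambda}$ satisfies the contraction-type bound
\begin{align*}
\ell(h_{t,\lambda}(\pi),\Ztruth) \leq n\exp(-(1-\eta)\bar n_\textmin I) + \frac{\ell(\pi,\Ztruth)}{\sqrt{nI/[wk[n/\bar n_\textmin]^2]}}
\end{align*}
for every $\pi\in\Pi_1$ with $\ell(\pi,\Ztruth)\le c_\tinit\bar n_\textmin$ — but here, because $p^*,q^*$ are known, I do not have the error from estimating them, so the relevant contraction works on the whole basin without needing a separate good-initializer event, which is why the hypothesis $\rho nI/[wk^2[n/\bar n_\textmin]^2]\to\infty$ (rather than the $w$-weighted version in Theorem \ref{thm:mf_iterative}) suffices. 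Third — and this is the one genuinely extra ingredient relative to the iterative theorem — I must argue a priori that $\hat\pi^\mf$ lies in the basin $\{\pi:\ell(\pi,\Ztruth)\le c_\tinit\bar n_\textmin\}$, so that the fixed-point equation can be combined with the contraction. This I would do by a separate coarse bound: evaluate $f(\Ztruth;A)$ and show that on a high-probability event any $\pi$ with $\ell(\pi,\Ztruth)$ large has strictly smaller objective value, so the global maximizer cannot be far from $\Ztruth$; equivalently, show directly that $h_{t,\lambda}$ maps all of $\Pi_1$ (or a large enough region) into the basin in one step, forcing the fixed point into the basin.

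Once $\hat\pi^\mf$ is known to be a fixed point inside the basin, the conclusion is immediate: applying the displayed one-step bound with $\pi=\hat\pi^\mf$ gives
\begin{align*}
\ell(\hat\pi^\mf,\Ztruth) \leq n\exp(-(1-\eta)\bar n_\textmin I) + \frac{\ell(\hat\pi^\mf,\Ztruth)}{\sqrt{nI/[wk[n/\bar n_\textmin]^2]}},
\end{align*}
and since the coefficient of $\ell(\hat\pi^\mf,\Ztruth)$ on the right is $o_n(1)$ by the assumption, rearranging yields $\ell(\hat\pi^\mf,\Ztruth)\le (1+o_n(1))\,n\exp(-(1-\eta)\bar n_\textmin I)$, which is absorbed into $n\exp(-(1-\eta')\bar n_\textmin I)$ after relabeling $\eta$. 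I expect the main obstacle to be the third step — the a priori localization of $\hat\pi^\mf$ — since the contraction bound only controls $h$ on the basin, whereas the global minimizer is defined without reference to any initializer; making the "one-step mapping into the basin" argument uniform over $\Pi_1$ (the continuous relaxation, not just $\Pi_0$) is where the care is needed, and it is precisely here that the slightly stronger signal condition $\rho nI/[wk^2[n/\bar n_\textmin]^2]\to\infty$ gets used, to guarantee that even the worst starting row configuration is pulled in.
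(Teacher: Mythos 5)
Your proposal follows essentially the same route as the paper's proof: the paper likewise characterizes $\hat\pi^\mf$ as a fixed point of $h_{t^*,\lambda^*}$ (with $t^*,\lambda^*$ deterministic since $p^*,q^*$ are known, so the parameter-consistency step of Theorem \ref{thm:mf_iterative} is not needed), reuses the one-step contraction bound from Part One of that proof, localizes $\hat\pi^\mf$ a priori by comparing objective values at $\hat\pi^\mf$ and $\Ztruth$ uniformly over $\Pi_1$ (Lemma \ref{lem:mf_loose}, proved via Grothendieck's inequality --- exactly the step where, as you anticipate, the assumption $\rho nI/[wk^2[n/\bar n_\textmin]^2]\rightarrow\infty$ is used), and then rearranges the resulting self-bounding inequality. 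The only minor quibble is your aside that this condition drops the $w$-weighting of Theorem \ref{thm:mf_iterative}: both conditions involve $w$, and the global theorem's is simply stronger (extra factors of $k$ and $\rho$) so that the coarse localization lands in the contraction basin.
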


\subsection{Gibbs Sampling} 

In Section \ref{sec:theory_BCAVI} we analyze an iterative algorithm,  BCAVI, and establish its linear convergence towards statistical optimality. The framework and methodology we establish is not limited to BCAVI, but can be extended to other iterative algorithms, including Gibbs sampling.

As a popular Markov chain Monte Carlo (MCMC) algorithm, Gibbs sampling has been widely used in practice to approximate the posterior distribution. There is a strong tie between Gibbs sampling and the mean field variational inference: both implement coordinate updates using conditional distributions. Using the general notation introduced in Section \ref{sec:general_mf}, to approximate $\mathbf{p}(x|y)$, Gibbs sampling obtains the update on $x_i$ by a random generation from the conditional distribution $\mathbf{p}(x_i|x_{-i},y)$, while the variational inference updates in a deterministic way with $\exp\left[\E_{\mathbf{q}_{-i}} \log \mathbf{p}(x_i|x_{-i},y)\right]$. 

We present a batched version of Gibbs sampling for community detection. It involves  iterative updates with
\begin{itemize}
\item Generate $p^{(s)}$ by sampling from $\mathbf{p}(p|q^{(s-1)},Z^{(s-1)},A)$;
\item Generate $q^{(s)}$ by sampling from $\mathbf{p}(q|p^{(s-1)},Z^{(s-1)},A)$;
\item Generate $Z^{(s)}_{i,\cdot}$ independently by sampling from  $\mathbf{p}(Z^{(s-1)}_{i,\cdot}|Z^{(s-1)}_{-i,\cdot},p^{(s)},q^{(s)},A)$, for $i\in[n]$.
\end{itemize}
We include the detailed implementation as Algorithm \ref{alg:gibbs} in the supplemental material (Section \ref{sec:algo_gibbs}). The similarity between Algorithm \ref{alg:BCAVI} and Algorithm \ref{alg:gibbs} makes it possible for us to analyze the output of Gibbs sampling in a similar way as we did for the variational inference.

\begin{theorem}\label{thm:gibbs}
Assume the initializer $Z^{(0)}$ satisfies $\ell(Z^{(0)},\Ztruth )\leq c_\tinit \bar n_\textmin$ for some sufficiently small constant $c_\tinit$ with probability at least $1-\epsilon$. Under the same condition as in Theorem \ref{thm:mf_iterative}, there exist some constant $c>0$ and some $\eta,\eta'=o_n(1)$ that go to 0 slowly, such that for all $s\geq 0$ of the batched Gibbs sampling (Algorithm \ref{alg:gibbs}), we have
\begin{align*}
\E_{Z^{(s+1)}}\left[\ell(Z^{(s+1)},\Ztruth )\Big|A,Z^{(0)}\right]\leq n\exp(-(1-\eta)\bar n_\textmin I) + c_n^s\ell(Z^{(0)},\Ztruth ) + (s+1)nb_n
\end{align*}
holds with probability at least $1-\exp[-(\bar n_\textmin I)^\frac{1}{2})] -n ^{-c}  -\epsilon$, where $b_n= \exp\left[-\eta'^2 \bar n_\textmin^2\right]+ \exp\left[-\eta'^{2}n^2I\right]$ and $c_n=1/\sqrt{nI/[wk[n/\bar n_\textmin]^2]}$. Consequently, for $s = [nI/k]/\log[nI/[wk[n/\bar n_\textmin]^2]]$, we have
\begin{align*}
\E_{Z^{(s+1)}}\left[\ell(Z^{(s+1)},\Ztruth )\Big|A,Z^{(0)}\right]\leq \exp(-(1-2\eta)\bar n_\textmin I),
\end{align*}
with probability at least $1-\exp[-(\bar n_\textmin I)^\frac{1}{2}]-n^{-c}-\epsilon$.
\end{theorem}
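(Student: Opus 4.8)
The plan is to transcribe the one‑step analysis behind Theorem~\ref{thm:mf_iterative}, reading the batched Gibbs update as a randomized version of the map $h_{t,\lambda}$ of \eqref{eqn:h} and then paying for the two extra layers of sampling noise. First I would note that, conditionally on $(A,Z^{(s)},p^{(s+1)},q^{(s+1)})$, the rows $Z^{(s+1)}_{i,\cdot}$ are independent categorical vectors with $\p(Z^{(s+1)}_{i,\cdot}=e_a\mid\cdot)=\tilde\pi^{(s+1)}_{i,a}$, where $\tilde\pi^{(s+1)}=h_{t^{(s+1)},\lambda^{(s+1)}}(Z^{(s)})$ and $t^{(s+1)}=\tfrac12\log\tfrac{p^{(s+1)}(1-q^{(s+1)})}{q^{(s+1)}(1-p^{(s+1)})}$, $\lambda^{(s+1)}=\tfrac{1}{2t^{(s+1)}}\log\tfrac{1-q^{(s+1)}}{1-p^{(s+1)}}$; this is just the exact full conditional $\mathbf{p}(Z_{i,\cdot}\mid Z_{-i,\cdot},p^{(s+1)},q^{(s+1)},A)$ written out, and the draws $p^{(s+1)}\sim\tbeta(\alpha^{(s+1)}_p,\beta^{(s+1)}_p)$, $q^{(s+1)}\sim\tbeta(\alpha^{(s+1)}_q,\beta^{(s+1)}_q)$ use the parameters of Theorem~\ref{thm:coor_update} evaluated at $\pi=Z^{(s)}$. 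Relabelling the communities of $\Ztruth$ so that the optimal alignment for $(Z^{(s)},\Ztruth)$ is the identity, and using $\|e_a-e_b\|_1=2\,\mathbb{I}\{a\neq b\}$,
\begin{align*}
\E\big[\ell(Z^{(s+1)},\Ztruth)\mid A,Z^{(s)},p^{(s+1)},q^{(s+1)}\big]\le\sum_{i=1}^n 2\big(1-\tilde\pi^{(s+1)}_{i,z^*_i}\big)\le\sum_{i=1}^n2\sum_{a\neq z^*_i}\frac{\pipr_{i,a}}{\pipr_{i,z^*_i}}\exp\Big[2t^{(s+1)}\!\!\sum_{j\neq i}\big(Z^{(s)}_{j,a}-Z^{(s)}_{j,z^*_i}\big)\big(A_{i,j}-\lambda^{(s+1)}\big)\Big].
\end{align*}
The right‑hand side has precisely the form bounded in the proof of Theorem~\ref{thm:mf_iterative}, with $Z^{(s)}\in\Pi_0$ playing the role of $\pi^{(s)}$ there (which only simplifies matters, since $\sum_{j}|Z^{(s)}_{j,a}-Z^*_{j,a}|$ is directly controlled by $\ell(Z^{(s)},\Ztruth)$), the sole genuine difference being that $(t^{(s+1)},\lambda^{(s+1)})$ now depend on the randomly drawn $(p^{(s+1)},q^{(s+1)})$ rather than on $\pi^{(s)}$ deterministically.

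Next I would control the Beta draws. Let $\mathcal{E}$ be the high‑probability event on $A$ from the proof of Theorem~\ref{thm:mf_iterative}, with $\p(\mathcal{E})\ge 1-\exp[-(\bar n_\textmin I)^{1/2}]-n^{-c}$. Assuming inductively that $\ell(Z^{(s)},\Ztruth)\le c_\tinit\bar n_\textmin$, most pairs are classified as in $\Ztruth$, so $\alpha^{(s+1)}_p,\beta^{(s+1)}_p\gtrsim\bar n_\textmin^2 p^*$ (and similarly for $\alpha^{(s+1)}_q,\beta^{(s+1)}_q$), and standard Beta concentration shows that the event $\mathcal{G}_s$ that $(p^{(s+1)},q^{(s+1)})$ lies in a suitably shrinking neighbourhood of $(p^*,q^*)$ — hence that $(t^{(s+1)},\lambda^{(s+1)})$ is within $o_n(1)$ of its ideal value $(t^*,\lambda^*)$ — satisfies $\p(\mathcal{G}_s^c\mid A,Z^{(s)})\le b_n$, with $b_n=\exp[-\eta'^2\bar n_\textmin^2]+\exp[-\eta'^2n^2I]$. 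On $\mathcal{E}\cap\mathcal{G}_s$ the displayed sum is bounded, verbatim as in the proof of Theorem~\ref{thm:mf_iterative}, by $n\exp(-(1-\eta)\bar n_\textmin I)+c_n\ell(Z^{(s)},\Ztruth)$ with $c_n=[nI/(wk[n/\bar n_\textmin]^2)]^{-1/2}$; on $\mathcal{G}_s^c$ I use the trivial bound $\ell(Z^{(s+1)},\Ztruth)\le 2n$. Integrating over $(p^{(s+1)},q^{(s+1)})$ gives, on $\mathcal{E}$ and whenever $\ell(Z^{(s)},\Ztruth)\le c_\tinit\bar n_\textmin$,
\begin{align*}
\E\big[\ell(Z^{(s+1)},\Ztruth)\mid A,Z^{(s)}\big]\le n\exp(-(1-\eta)\bar n_\textmin I)+c_n\,\ell(Z^{(s)},\Ztruth)+2n\,b_n.
\end{align*}

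To remove the proviso I would propagate the invariant $\ell(Z^{(s)},\Ztruth)\le c_\tinit\bar n_\textmin$ by induction. Conditionally on $(A,Z^{(s)},p^{(s+1)},q^{(s+1)})$ with $(p^{(s+1)},q^{(s+1)})\in\mathcal{G}_s$, the quantity $\tfrac12\ell(Z^{(s+1)},\Ztruth)$ is a sum of $n$ independent Bernoulli variables of total mean at most $n\exp(-(1-\eta)\bar n_\textmin I)+c_n\ell(Z^{(s)},\Ztruth)\le\tfrac12c_\tinit\bar n_\textmin$ (using $c_n=o_n(1)$ and $n\exp(-(1-\eta)\bar n_\textmin I)=o(\bar n_\textmin)$), so a Chernoff bound yields $\p(\ell(Z^{(s+1)},\Ztruth)>c_\tinit\bar n_\textmin\mid\cdot)\le\exp(-c'\bar n_\textmin)$; a union bound over the first $s$ iterations together with the $\mathcal{G}^c$ events shows that on $\mathcal{E}\cap\{Z^{(0)}\text{ good}\}$ the invariant persists except on an event of probability $\le s(b_n+\exp(-c'\bar n_\textmin))$, whose contribution ($\le 2ns(b_n+\exp(-c'\bar n_\textmin))$) to the expected loss folds into the $b_n$ term after slightly shrinking $\eta'$. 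Iterating the one‑step recursion, using $\sum_{j\ge0}c_n^j\le2$ for the geometric part (absorbing the factor $2$ into $\eta$) and $\sum_{j=0}^{s}c_n^j\le s+1$ for the $b_n$ part, gives
\begin{align*}
\E\big[\ell(Z^{(s+1)},\Ztruth)\mid A,Z^{(0)}\big]\le n\exp(-(1-\eta)\bar n_\textmin I)+c_n^{\,s}\,\ell(Z^{(0)},\Ztruth)+(s+1)n\,b_n
\end{align*}
on $\mathcal{E}\cap\{Z^{(0)}\text{ good}\}$, an event of probability $\ge1-\exp[-(\bar n_\textmin I)^{1/2}]-n^{-c}-\epsilon$, which is the first assertion. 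For the consequence I substitute $s=[nI/k]/\log[nI/(wk[n/\bar n_\textmin]^2)]$: by the choice of $s$ the term $c_n^{\,s}\ell(Z^{(0)},\Ztruth)$ is driven below $\exp(-(1-2\eta)\bar n_\textmin I)$, the term $(s+1)nb_n$ is negligible since $b_n$ is super‑exponentially small while $s$ is at most polynomial in $n$, and $n\exp(-(1-\eta)\bar n_\textmin I)\le\exp(-(1-2\eta)\bar n_\textmin I)$ since $\eta\bar n_\textmin I\gg\log n$.

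The hard part will be the bookkeeping around the two stochastic mechanisms, which have no counterpart in Theorem~\ref{thm:mf_iterative}: one must control simultaneously (i) the Beta draws $(p^{(s)},q^{(s)})$, which must stay close enough to $(p^*,q^*)$ that $t^{(s)},\lambda^{(s)}$ do not corrupt the update — this is where the $b_n$ term enters, and matching its scaling (through $\bar n_\textmin^2$ and $n^2I$) to the concentration rates of the Gibbs parameters $\alpha^{(s)},\beta^{(s)}$ is a point to be checked carefully; and (ii) the categorical draws of $Z^{(s)}$ itself, which must never push the iterate out of the basin $\{\ell(\cdot,\Ztruth)\le c_\tinit\bar n_\textmin\}$. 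Step (ii) is the delicate one: a Markov bound on $\ell(Z^{(s+1)},\Ztruth)$ is far too weak to keep the iterate in the basin with the required probability, so one must exploit that, conditioned on the previous state, it is a sum of independent small‑mean Bernoulli variables, run a Chernoff/union‑bound argument over all iterations up to $s$, and arrange the resulting error terms to be small enough that they do not degrade the stated probability guarantee. Everything else is a direct transcription of the BCAVI computation in the proof of Theorem~\ref{thm:mf_iterative}.
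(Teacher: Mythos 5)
Your proposal is correct and follows essentially the same route as the paper's proof: reduce $\E[\ell(Z^{(s+1)},\Ztruth)\mid\cdot]$ to the $\ell_1$ error of the conditional-probability matrix $h_{t^{(s+1)},\lambda^{(s+1)}}(Z^{(s)})$, reuse the one-step contraction event from the proof of Theorem \ref{thm:mf_iterative}, control the Beta draws by concentration, control the categorical draws so the iterate stays in the basin, and union-bound over iterations to get the $(s+1)nb_n$ term. The only differences are bookkeeping — the paper runs the recursion through $\pi^{(s)}$ with events $\mathcal{H}_1^{(s)},\mathcal{H}_2^{(s)},\mathcal{H}_3^{(s)}$ (Bernstein for the sampling step, Beta tails for $(p^{(s)},q^{(s)})$), while you run it through $Z^{(s)}$ with a Chernoff basin-propagation bound, so your $\exp(-c'\bar n_\textmin)$ escape term plays the role of the paper's $\exp[-\eta'^2\bar n_\textmin^2]$ contribution to $b_n$ rather than being absorbed into the Beta part; this is harmless since it is dominated by $n\exp(-(1-\eta)\bar n_\textmin I)$ in the final bound.
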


Theorem \ref{thm:gibbs} establishes theoretical justification for batched Gibbs sampling for community detection. Despite that we have the same $c_n$ and similar convergence as Theorem \ref{thm:mf_iterative}, some extra efforts are needed due to the existence of randomness in each iterative update. The additional term of $b_n$ is necessary to handle the extreme events due to random generation. Note that $(s+1)nb_n$ is dominated by $n\exp(-(1-\eta)\bar n_\textmin I)$ as long as $s\leq e^n$. Thus, when $s\leq e^n$, we have similar ``linear convergence'' results as in Theorem \ref{thm:mf_iterative}.

\subsection{An Iterative Algorithm for Maximum Likelihood Estimation}
Maximum likelihood estimator (MLE) usually yields statistical optimality. However, the maximization of the likelihood $\mathbf{p}(A|Z,p,q)$ over $Z,p,q$ is computationally infeasible. Inspired by the procedures proposed in Algorithm \ref{alg:BCAVI} and Algorithm \ref{alg:gibbs}, we may approach $\max \mathbf{p}(A|Z,p,q)$ by alternating maximization. We use a batched coordinate maximization:
\begin{itemize}
\item Maximize $\mathbf{p}(A|p,q^{(s-1)},Z^{(s-1)})$ over $p$ to obtain $p^{(s)}$;
\item Maximize $\mathbf{p}(A|p^{(s-1)},q,Z^{(s-1)})$ over $q$ to obtain $q^{(s)}$;
\item Maximize $\mathbf{p}(A|p^{(s-1)},q^{(s-1)},Z_{i,\cdot},Z^{(s-1)}_{-i,\cdot})$ over $Z_{i,\cdot}$ to obtain $Z_{i,\cdot}^{(s)}$, for each $i\in[n]$.
\end{itemize}
We include its detailed implementation in Algorithm \ref{alg:mle} in the supplemental material (Section \ref{sec:alg_mle}). We have the following theoretical guarantee of this iterative algorithm to approximate the MLE.  

\begin{theorem}\label{thm:mle}
Assume the initializer $Z^{(0)}$ satisfies $\ell(Z^{(0)},\Ztruth )\leq c_\tinit \bar n_\textmin$ for some sufficiently small constant $c_\tinit$ with probability at least $1-\epsilon$. Under the same condition as in Theorem \ref{thm:mf_iterative}, there exist some constant $c>0$ and some $\eta=o_n(1)$, such that in each iteration of the BCAVI algorithm,
\begin{align*}
\ell(Z^{(s+1)},\Ztruth )\leq n\exp(-(1-\eta)\bar n_\textmin I) + \frac{\ell(Z^{(s)},\Ztruth )}{\sqrt{nI/[wk[n/\bar n_\textmin]^2]}},\forall s\geq 0,
\end{align*}
holds with probability at least $1-\exp[-(\bar n_\textmin I)^\frac{1}{2}]-n^{-c}-\epsilon$.
\end{theorem}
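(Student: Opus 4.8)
The plan is to mirror the proof of Theorem \ref{thm:mf_iterative}, after observing that the community update of Algorithm \ref{alg:mle} is a hard-assignment version of the BCAVI map $h_{t,\lambda}$. Maximizing $\log\mathbf{p}(A|p^{(s-1)},q^{(s-1)},Z_{i,\cdot},Z^{(s-1)}_{-i,\cdot})$ over $Z_{i,\cdot}\in\{e_1,\dots,e_k\}$ reduces, after cancelling terms not involving node $i$, to
\[
z_i^{(s)} \;=\; \argmax_{a\in[k]}\; 2 t^{(s)}\sum_{j\neq i} Z^{(s-1)}_{j,a}\bigl(A_{i,j}-\lambda^{(s)}\bigr),
\]
where $t^{(s)}=\tfrac12\log\tfrac{p^{(s)}(1-q^{(s)})}{q^{(s)}(1-p^{(s)})}$, $\lambda^{(s)}=\tfrac{1}{2t^{(s)}}\log\tfrac{1-q^{(s)}}{1-p^{(s)}}$, and $p^{(s)},q^{(s)}$ are the within- and between-community edge densities induced by $Z^{(s-1)}$ (the maximizers of the first two steps). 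Since $p^{(s)},q^{(s)}$ play the role of $\alpha_p^{(s)}/(\alpha_p^{(s)}+\beta_p^{(s)})$ and $\alpha_q^{(s)}/(\alpha_q^{(s)}+\beta_q^{(s)})$ with the (negligible) prior counts dropped, $z_i^{(s)}$ is precisely $\argmax_a\,[h_{t^{(s)},\lambda^{(s)}}(Z^{(s-1)})]_{i,a}$ in the notation of Equation (\ref{eqn:h}), with $\pipr_{i,\cdot}$ constant in $a$. Thus the convergence analysis transfers essentially verbatim; the only genuine changes are that $(t^{(s)},\lambda^{(s)})$ come from plug-in MLEs rather than digamma-based updates and that the soft assignments $\pi^{(s)}_{i,\cdot}\in\Pi_1$ become hard assignments $Z^{(s)}_{i,\cdot}\in\Pi_0$, which only simplifies the per-node bookkeeping.

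I would first control the global parameters. On the event $\ell(Z^{(s-1)},\Ztruth)\le c_\tinit\bar n_\textmin$, the counts defining $p^{(s)},q^{(s)}$ differ from their values at $\Ztruth$ by an amount negligible relative to the latter, and the counts at $\Ztruth$ concentrate around their $\binom{n}{2}$-scaled means by a Chernoff/Bernstein bound for sums of independent Bernoullis; using the second part of Equation (\ref{eqn:assumption_nI}) and $p^*=o_n(1)$ this yields $p^{(s)}/p^*,\,q^{(s)}/q^*=1+o_n(1)$, hence $t^{(s)}=(1+o_n(1))t^*$ and $\lambda^{(s)}=(1+o_n(1))\lambda^*$ with $t^*=\tfrac12\log\tfrac{p^*(1-q^*)}{q^*(1-p^*)}>0$ and $\lambda^*=\tfrac{1}{2t^*}\log\tfrac{1-q^*}{1-p^*}$. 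This is the analogue of the parameter-control lemma behind Theorem \ref{thm:mf_iterative}.

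The heart of the argument is the one-step analysis. Fix a permutation $\phi$ attaining $\ell(Z^{(s)},\Ztruth)$ and, for a node $i$ in true community $z^*_i$ and a competitor $b$, decompose the score difference $2 t^{(s)}\sum_{j\neq i}\bigl(Z^{(s)}_{j,z^*_i}-Z^{(s)}_{j,b}\bigr)(A_{i,j}-\lambda^{(s)})$ into an \emph{oracle} part, obtained by replacing $Z^{(s)}$ with $\Ztruth$ and $(t^{(s)},\lambda^{(s)})$ with $(t^*,\lambda^*)$, plus a \emph{perturbation} part. The oracle part has (conditional) mean of order $\bar n_\textmin I$ and is a sum of independent centered Bernoullis, so its lower tail is controlled by the Chernoff bound with exponent $(1-\eta)\bar n_\textmin I$; a union bound over the $n$ nodes and the at most $k$ competitors produces the term $n\exp(-(1-\eta)\bar n_\textmin I)$. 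The perturbation part is bounded, uniformly over every $Z^{(s)}$ in the initialization ball, by $|t^{(s)}-t^*|$, $|\lambda^{(s)}-\lambda^*|$ and the fluctuation $\sup_{|E|\le\ell(Z^{(s)},\Ztruth)}|\sum_{j\in E}(A_{i,j}-\E A_{i,j})|$, times the number of mislabeled neighbors of $i$; summing over $i$ gives the contraction term $\ell(Z^{(s)},\Ztruth)/\sqrt{nI/[wk[n/\bar n_\textmin]^2]}$.

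The main obstacle is exactly the one in the proof of Theorem \ref{thm:mf_iterative}: obtaining the perturbation bound \emph{uniformly} over all sets $E$ of mislabeled nodes with $|E|\le c_\tinit\bar n_\textmin$ and over all iterations $s$, so that the coefficient is genuinely $o_n(1)$ and free of $s$; this is a supremum-of-empirical-process estimate and is where the first part of Equation (\ref{eqn:assumption_nI}) enters. Because Algorithm \ref{alg:mle} is deterministic given $(A,Z^{(0)})$, none of the extra $b_n$ terms required for Gibbs sampling in Theorem \ref{thm:gibbs} arise, so this extension is strictly easier than that one. Iterating the one-step inequality is self-sustaining, since $n\exp(-(1-\eta)\bar n_\textmin I)+c_n c_\tinit\bar n_\textmin<c_\tinit\bar n_\textmin$ keeps every iterate inside the ball, and (as $\ell(Z^{(s)},\Ztruth)$ equals $2$ times the number of mislabeled nodes) the bound forces exact recovery once it drops below $1$; the $s_0$-iteration refinement then follows exactly as Theorem \ref{thm:mf_converge} follows from Theorem \ref{thm:mf_iterative}.
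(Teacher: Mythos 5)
Your proposal takes essentially the same route as the paper: the paper's own proof of this theorem is a one-line remark that Algorithm \ref{alg:mle} is the hard-assignment analogue of BCAVI, so the argument of Theorem \ref{thm:mf_iterative} (parameter consistency, the one-step decomposition into an oracle term giving $n\exp(-(1-\eta)\bar n_\textmin I)$ and a perturbation term giving the contraction factor, and the self-sustaining iteration) transfers essentially verbatim, which is exactly what you reconstruct. The only minor remark is that the uniform perturbation bound you flag as the main obstacle is handled in the paper not by a supremum over mislabeled sets $E$ but by the $L_2^\textsum$ spectral argument ($\opnorm{A-\E A}\lesssim \sqrt{np^*}$, with degree trimming in the sparse regime), and that argument carries over unchanged to the hard-assignment iterates.
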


Algorithm $\ref{alg:mle}$ is essentially the same with  the procedure proposed in \cite{gao2015achieving}. However, \cite{gao2015achieving} can only analyze the performance of one single iteration from $Z^{(0)}$ (i.e., $\ell(Z^{(1)},\Ztruth )$), and it requires extra data splitting steps. Theorem \ref{thm:mle} provides a stronger and cleaner result compared with that of \cite{gao2015achieving}. 


\section{Proofs of Main Theorems}\label{sec:main_proof}
In this section, we give proofs of the theorems in Section \ref{sec:mf_all} and Section \ref{sec:main_theory}. We first present the proof of Theorem \ref{thm:MF_simplifed} in Section \ref{sec:proof_MF_simplifed}. Then we give the proof Theorem \ref{thm:coor_update} in Section \ref{sec:proof_coor_update}. The proof of Theorem \ref{thm:mf_iterative} is given in Section \ref{sec:proof_mf_iterative}.

\subsection{Proof of Theorem \ref{thm:MF_simplifed}}\label{sec:proof_MF_simplifed}
From Equation (\ref{eqn:MF}), by some algebra (see Equation (\ref{eqn:MF_simplification_proof}) in Appendix \ref{sec:appendix_general} for detailed derivation) we have 
\begin{align}\label{eqn:mf_expectation_explicit}
(\hat\pi^\mf,\hat\alpha_p^\mf,\hat\beta_p^\mf,\hat\alpha_q^\mf,\hat\beta_q^\mf) &= \argmin_{\substack{\pi\in\Pi_1 \\ \alpha_p,\beta_p,\alpha_q,\beta_q>0}} \E_{\mathbf{q}}[\log\mathbf{p}(A|Z,p,q)]-\kl(\mathbf{q}(Z,p,q)\|\mathbf{p}(Z,p,q)),
\end{align}
where we use $\mathbf{q}$ instead of $\mathbf{q}_{\pi,\alpha_p,\beta_p,\alpha_q,\beta_q}$ for simplicity. From the conditional distribution in Equation (\ref{eqn:likelihood}), the log-likelihood function can be simplified as
\begin{align*}
\log \mathbf{p}(A|Z,p,q) =\sum_{a,b}\sum_{i<j}Z_{ia}Z_{jb}\left[A_{i,j}\log\frac{B_{ab}}{1-B_{ab}}+\log(1-B_{ab})\right].
\end{align*}
Due to the independence of $Z$ and $p,q$ under $\mathbf{q}$, we have
\begin{align*}
\E_{\mathbf{q}}[\log\mathbf{p}(A|Z,p,q)]&=\E_{\mathbf{q}(p,q)}\left[\E_{\mathbf{q}(Z)}\left[\sum_{a,b}\sum_{i<j}Z_{i,a}Z_{j,b}\left[A_{i,j}\log\frac{B_{ab}}{1-B_{ab}}+\log(1-B_{ab})\right]\right]\right]\\
&=\E_{\mathbf{q}(p,q)}\left[\sum_{a,b}\sum_{i<j}\pi_{i,a}\pi_{j,b}\left[A_{i,j}\log\frac{B_{ab}}{1-B_{ab}}+\log(1-B_{ab})\right]\right].
\end{align*}
Since $B_{a,a}=p,\forall a\in[k]$ and $B_{a,b}=q,\forall a\neq b$, we have
\begin{align}\label{eqn:thm_2_1_1}
\E_{\mathbf{q}}[\log\mathbf{p}(A|Z,p,q)]&=\E_{\mathbf{q}(p,q)}\left[\sum_{a}\sum_{i<j}\pi_{i,a}\pi_{j,a}\left[A_{i,j}\log\frac{p(1-q)}{q(1-p)}+\log\frac{1-p}{1-q}\right]\right]\\
&\quad +\E_{\mathbf{q}(p,q)}\left[\sum_{a, b}\sum_{i<j}\pi_{i,a}\pi_{j,b}\Big[A_{i,j}\log\frac{q}{1-q}+\log(1-q)\Big]\right].\nonumber
\end{align}
By properties of Beta distribution, we obtain
\begin{align*}
\E_{\mathbf{q}(p,q)}\log\frac{p(1-q)}{q(1-p)} &=\E_{\mathbf{q}(p)}\left[\log p-\log(1-p)\right] - \E_{\mathbf{q}(q)}\left[\log q-\log(1-q)\right]\\
&= \left[\psi(\alpha_p)-\psi(\beta_p)\right] - \left[\psi(\alpha_q)-\psi(\beta_q)\right],
\end{align*}
and 
\begin{align*}
\E_{\mathbf{q}(p,q)}\log\frac{1-q}{1-p} &=\E_{\mathbf{q}(q)}\log(1-q) - \E_{\mathbf{q}(p)}\log(1-p)\\
&= \left[\psi(\beta_q)-\psi(\alpha_q+\beta_q)\right] - \left[\psi(\beta_p)-\psi(\alpha_p+\beta_p)\right].
\end{align*}
This leads to
\begin{align}\label{eqn:thm_2_1_2}
\E_{\mathbf{q}(p,q)} \left[\sum_{a}\sum_{i<j}\pi_{i,a}\pi_{j,a}\left[A_{i,j}\log\frac{p(1-q)}{q(1-p)}+\log\frac{1-p}{1-q}\right]\right] & = 2t\left[\sum_{a}\sum_{i<j}\pi_{i,a}\pi_{j,a}(A_{i,j}-\lambda)\right]\\
&= t\langle A-\lambda 1_n1_n^T+\lambda I_n,\pi\pi^{T}\rangle.\nonumber
\end{align}
Similarly we can obtain
\begin{align}\label{eqn:thm_2_1_3}
\E&_{\mathbf{q}(p,q)}\left[\sum_{a, b}\sum_{i<j}\pi_{i,a}\pi_{j,b}\Big[A_{i,j}\log\frac{q}{1-q}+\log(1-q)\Big]\right] \\
&= \left[\E_{\mathbf{q}(q)}\log\frac{q}{1-q}\right]\sum_{i<j}A_{i,j}\sum_{a, b}\pi_{i,a}\pi_{j,b} +\left[\E_{\mathbf{q}(q)} \log (1-q)\right]\sum_{i<j}\sum_{a, b}\pi_{i,a}\pi_{j,b}\nonumber\\
&=\frac{1}{2}\left[\psi(\alpha_q)-\psi(\beta_q)\right]\norm{A}_1 + \frac{n}{2}\left[\psi(\beta_q)-\psi(\alpha_q+\beta_q)\right],\nonumber
\end{align}
where we use the fact that $\norm{\pi_{i,\cdot}}_1=1,\forall i\in[n]$. Now consider the Kullback-–Leibler divergence between $\mathbf{q}(Z,p,q)$ and $\mathbf{p}(Z,p,q)$. Due to the independence of $p,q$ and $\{Z_{i,\cdot}\}_{i=1}^n$ in both distributions, we have
\begin{align}\label{eqn:thm_2_1_4}
\kl(\mathbf{q}(Z,p,q)\|\mathbf{p}(Z,p,q)) &= \kl(\mathbf{q}(Z)\|\mathbf{p}(Z)) + \kl(\mathbf{q}(p)\|\mathbf{p}(p)) + \kl(\mathbf{q}(q)\|\mathbf{p}(q))\\
&=\sum_{i=1}^n\kl\left[\tcate(\pi_{i,\cdot})\|\tcate(\pipr_{i,\cdot})\right] \nonumber\\
&+ \kl\left[\tbeta(\alpha_p,\beta_p)\|\tbeta(\alphapr_p,\betapr_p)\right]+ \kl\left[\tbeta(\alpha_q,\beta_q)\|\tbeta(\alphapr_q,\betapr_q)\right].\nonumber
\end{align}
By Equations (\ref{eqn:mf_expectation_explicit}) - (\ref{eqn:thm_2_1_4}), we conclude with the desired result.

\subsection{Proof of Theorem \ref{thm:coor_update}}\label{sec:proof_coor_update}
Note that 
\begin{align*}
B_{\zi,\zj} = \left[\sum_{a=1}^k Z_{i,a}Z_{j,a}\right] p+\left[\sum_{a\neq b}Z_{i,a}Z_{j,b}\right]q.
\end{align*}
We rewrite the joint distribution $\mathbf{p}(p,q,z,A)$ in Equation (\ref{eqn:joint_likelihood}) as follows,
\begin{align}\label{eqn:joint_likelihood_rewritten}
&\mathbf{p}(p,q,Z,A) \\
&= \left[\prod_{i=1}^n\pipr_{i,z_i}\right] \left[\prod_{i<j} \left[p^{A_{i,j}}(1-p)^{1-A_{i,j}}\right]^{\sum_{a=1}^k Z_{i,a}Z_{j,a}}\right] \left[\prod_{i<j} \left[q^{A_{i,j}}(1-q)^{1-A_{i,j}}\right]^{\sum_{a\neq b}^k Z_{i,a}Z_{j,b}}\right]\nonumber\\
&\quad\times\left[\frac{\Gamma(\alphapr_p+\betapr_p)}{\Gamma(\alphapr_p)\Gamma(\betapr_p)}p^{\alphapr_p-1}(1-p)^{\betapr_p-1}\right] \left[\frac{\Gamma(\alphapr_q+\betapr_q)}{\Gamma(\alphapr_q)\Gamma(\betapr_q)}q^{\alphapr_q-1}(1-q)^{\betapr_q-1}\right].\nonumber
\end{align}

\paragraph{Updates on $p$ and $q$}
From Equation (\ref{eqn:joint_likelihood_rewritten}), $p$ has conditional probability as
\begin{align*}
\mathbf{p}(p|q,Z,A)\propto\left[\prod_{i<j} \left[p^{A_{i,j}}(1-p)^{1-A_{i,j}}\right]^{\sum_{a=1}^k Z_{i,a}Z_{j,a}}\right] \left[\frac{\Gamma(\alphapr_p+\betapr_p)}{\Gamma(\alphapr_p)\Gamma(\betapr_p)}p^{\alphapr_p-1}(1-p)^{\betapr_p-1}\right].
\end{align*}
Then the CAVI update in Equation (\ref{eqn:CAVI_update_general_explicit}) leads to
\begin{align*}
 \mathbf{\hat q}(p)&\propto \exp\left[\E_{\mathbf{q}(q,Z)} \log \mathbf{p}(p|q,Z,A)\right]\\
&\propto    \exp\left[ \E_{\mathbf{q}(Z)}\sum_{i<j}\sum_{a=1}^k Z_{i,a}Z_{j,a}\log\left[p^{A_{i,j}}(1-p)^{1-A_{i,j}}\right] \right] \left[\frac{\Gamma(\alphapr_p+\betapr_p)}{\Gamma(\alphapr_p)\Gamma(\betapr_p)}p^{\alphapr_p-1}(1-p)^{\betapr_p-1}\right]\\
&=    \exp\left[ \sum_{i<j}\sum_{a=1}^k \pi_{i,a}\pi_{j,a}\log\left[p^{A_{i,j}}(1-p)^{1-A_{i,j}}\right] \right] \left[\frac{\Gamma(\alphapr_p+\betapr_p)}{\Gamma(\alphapr_p)\Gamma(\betapr_p)}p^{\alphapr_p-1}(1-p)^{\betapr_p-1}\right].
\end{align*}
It can be written as
\begin{align*}
 \mathbf{\hat q}(p)&\propto \left[p^{\sum_{i<j}\sum_{a=1}^k \pi_{i,a}\pi_{j,a}A_{i,j}}(1-p)^{\sum_{i<j}\sum_{a=1}^k \pi_{i,a}\pi_{j,a}(1-A_{i,j})}\right]  \left[\frac{\Gamma(\alphapr_p+\betapr_p)}{\Gamma(\alphapr_p)\Gamma(\betapr_p)}p^{\alphapr_p-1}(1-p)^{\betapr_p-1}\right].
\end{align*}
The distribution of $p$ is still Beta $p\sim \tbeta (\alpha'_p,\beta'_p)$, with
\begin{align*}
\alpha'_p = \alphapr_p + \sum_{i<j}\sum_{a=1}^k \pi_{i,a}\pi_{j,a}A_{i,j},\text{ and }\beta_p'=\betapr_p+\sum_{i<j}\sum_{a=1}^k \pi_{i,a}\pi_{j,a}(1-A_{i,j}).
\end{align*}
Similar analysis on $q$ yields updates on $\alpha'_q$ and $\beta'_q$. Hence, its proof is omitted.

\paragraph{Updates on $\{Z_{i,\cdot}\}_{i=1}^n$} From Equation (\ref{eqn:joint_likelihood_rewritten}), the conditional distribution on $Z_{i,\cdot}$ is
\begin{align*}
\mathbf{p}(Z_{i,\cdot}|Z_{-i,\cdot},p,q,A)\propto \pipr_{i,z_i}\left[\prod_{j\neq i}B_{\zi,\zj}^{A_{i,j}}(1-B_{\zi,\zj})^{1-A_{i,j}}\right].
\end{align*}
Consequently, up to a constant not depending on $i$, we have
\begin{align*}
&\log \p(Z_{i,a} = 1|Z_{-i,\cdot},p,q,A) \\
&= \log \pipr_{i,a} +\log\left[\sum_{j\neq i}Z_{j,a}\left[A_{i,j}\log \frac{p}{1-p} +\log(1-p)\right]+\sum_{j\neq i}\sum_{b\neq a}Z_{j,b}\left[A_{i,j}\log \frac{q}{1-q} +\log(1-q)\right]\right]\\
&= \log \pipr_{i,a} +\log\left[\sum_{j\neq i}Z_{j,a}\left[A_{i,j}\log \frac{p(1-q)}{q(1-p)} -\log\frac{1-q}{1-p}\right]+\sum_{j\neq i}\left[A_{i,j}\log \frac{q}{1-q} +\log(1-q)\right]\right].
\end{align*}
Then the CAVI update from Equation (\ref{eqn:CAVI_update_general_explicit}) leads to
\begin{align}
\pi_{i,a}' &=  \mathbf{\hat q}_{Z_{i,\cdot}}(Z_{i,a}=1)\nonumber\\
&\propto\exp\left[\E_{\mathbf{q}(p,q,z_{-i})}\log \p(Z_{i,a} = 1|Z_{-i,\cdot},p,q,A)\right]\nonumber\\
&= \exp\left[\E_{\mathbf{q}(p)}\E_{\mathbf{q}(q)}\E_{\mathbf{q}(Z_{-i,\cdot})}\log \p(Z_{i,}=1 |Z_{-i,\cdot},p,q,A)\right]\nonumber\\
&\propto\pipr_{i,a}\exp\left[\E_{\mathbf{q}(p)}\E_{\mathbf{q}(q)}\sum_{j\neq i}\pi_{j,a}\left[A_{i,j}\log \frac{p(1-q)}{q(1-p)} -\log\frac{1-q}{1-p}\right]\right],\label{eqn:pi_update}
\end{align}
where we use the property that $p,q,Z$ are all independent of each other under $\mathbf{q}$. Recall that $p\sim\tbeta(\alpha_p,\beta_p)$ and $q\sim\tbeta(\alpha_q,\beta_q)$. It can be shown that
\begin{align*}
\E_{\mathbf{q}(p)}\log\frac{p}{1-p}=\psi(\alpha_p)-\psi(\beta_p),\text{ and }\E_{\mathbf{q}(p)}\log(1-p)=\psi(\beta_p)-\psi(\alpha_p+\beta_p),
\end{align*}
where $\psi(\cdot)$ is digamma function. Similar results hold for $\E_{\mathbf{q}(q)}\log (q/(1-q))$ and $\E_{\mathbf{q}(q)}\log(1-q)$. Plug in these expectations to Equation (\ref{eqn:pi_update}), we have
\begin{align*}
\pi_{i,a}' &\propto\pipr_{i,a}\exp\left[2t\sum_{j\neq i}\pi_{j,a}(A_{i,j}-\lambda)\right].
\end{align*}

\subsection{Proof of Theorem \ref{thm:mf_iterative}}\label{sec:proof_mf_iterative}

Theorem \ref{thm:mf_iterative} gives a theoretical justification for all iterations in the BCAVI algorithm. Due to the limit of pages, in this section we assume $\ell(\pi^{(0)},Z^*)=o(\bar n_\textmin)$. The proof of the case $\ell(\pi^{(0)},Z^*)$ in a constant order of $\bar n_\textmin$ is essentially the same with slight modification, and we defer it to Section \ref{sec:proof_mf_iterative_constant} in the supplemental material.

To prove the theorem, it is sufficient if we are able to show the loss $\ell(\cdot,\Ztruth )$ decreases in a desired way for one BCAVI iteration, when the community assignment is in an appropriate neighborhood of the truth.  Let $\gamma=o(1)$ be any sequence that goes to zero when $n$ grows. Define $t^*$ and $\lambda^*$ as the true counterparts of $t$ and $\lambda$, by
\begin{align*}
t^*=\frac{1}{2}\log\frac{p^*(1-q^*)}{q^*(1-p^*)},\text{ and }\lambda^* =\frac{1}{2t^*}\log \frac{1-q^*}{1-p^*}.
\end{align*}
The proof of Theorem \ref{thm:mf_iterative} involves  three parts as follows.

~\\
\textbf{Part One: One Iteration.} Consider any $\pi\in\Pi_1$ such that $\norm{\pi-\Ztruth }_1\leq \gamma\bar n_\textmin$. Let $\eta'$ be any sequence such that  $\eta'=o(1)$. Consider any $t$ and $\lambda$ with $|t-t^*|\leq \eta' (p^*-q^*)/p^*$ and $|\lambda-\lambda^*|\leq \eta' (p^*-q^*)$.  We define $\mathcal{F}$ to be the event, that after applying the mapping $h_{t,\lambda}(\cdot)$, there exists some $\eta=o(1)$ such that
\begin{align*}
\norm{h_{t,\lambda}(\pi)-\Ztruth }_1 \leq n\exp(-(1-\eta)\bar n_\textmin I) + \frac{\norm{\pi-\Ztruth }_1}{\sqrt{nI/[wk[n/\bar n_\textmin]^2]}},
\end{align*}
holds uniformly over all the eligible $\pi,t$ and $\lambda$. We have
\begin{align*}
\p(\mathcal{F})\geq 1-\exp[-(\bar n_\textmin I)^\frac{1}{2})] -n ^{-r},
\end{align*}
for some constant $r>0$. We defer its proof to the later part of this section.

~\\
\textbf{Part Two: Consistency of Model Parameters.} Consider any $\pi\in\Pi_1$ such that $\norm{\pi-\Ztruth }_1\leq \gamma\bar n_\textmin$. Define
\begin{align}
  &\alpha_p = \alphapr_p + \sum_{a=1}^k\sum_{i<j}A_{i,j}\pi_{i,a}\pi_{j,a},\quad\beta_p=\betapr_p+\sum_{a=1}^k\sum_{i<j}(1-A_{i,j})\pi_{i,a}\pi_{j,a},\label{eqn:alpha_p_prime}
\end{align}
and
\begin{align}
  &\alpha_q = \alphapr_q + \sum_{a\neq b}\sum_{i<j}A_{i,j}\pi_{i,a}\pi_{j,b},\quad\beta_q=\betapr_q+\sum_{a\neq b}\sum_{i<j}(1-A_{i,j})\pi_{i,a}\pi_{j,b},\label{eqn:alpha_q_prime}
 \end{align}
 and consequently,
 \begin{align}
 t&=\frac{1}{2}\left[\left[\psi(\alpha_p)-\psi(\beta_p)\right] - \left[\psi(\alpha_q)-\psi(\beta_q)\right]\right]\label{eqn:t_prime}\\
\lambda &= \frac{1}{2t}\left[\left[\psi(\beta_q)-\psi(\alpha_q+\beta_q)\right]- \left[\psi(\beta_p)-\psi(\alpha_p+\beta_p)\right]\right].\label{eqn:lambda_prime}
 \end{align}
From Lemma \ref{lem:concentration_t_lambda}, we have a concentration of $t,\lambda$ towards $t^*,\lambda^*$. That is, there exists some $\eta' =o(1)$, such that with probability at least $1-e^35^{-n}$, the following inequalities hold
 \begin{align*}
 |t-t^*|\leq \eta' (p^*-q^*)/p^*,\text{ and }|\lambda-\lambda^*|\leq \eta' (p^*-q^*),
 \end{align*}
 uniformly over all the eligible $\pi$.
 
 ~\\
\textbf{Part Three: Multiple Iterations.} Consider any $\pi\in\Pi_1$ such that $\norm{\pi-\Ztruth }_1\leq \gamma\bar n_\textmin$. Define $\alpha_p,\beta_p,\alpha_q,\beta_q,t,\lambda$ as Equations (\ref{eqn:alpha_p_prime}) - (\ref{eqn:lambda_prime}). A combination of results from \emph{Part One} and \emph{Part Two} immediately implies that
\begin{align}\label{eqn:convergence_pi_prime}
\norm{h_{t,\lambda}(\pi)-\Ztruth }_1\leq n\exp(-(1-\eta)\bar n_\textmin I) + \frac{\norm{\pi-\Ztruth }_1}{\sqrt{nI/[wk[n/\bar n_\textmin]^2]}},
\end{align}
holds uniformly over all the eligible $\pi$ with probability at least $1-\exp[-(\bar n_\textmin I)^\frac{1}{2})]-n ^{-r}$. This is sufficient to show Theorem \ref{thm:mf_iterative}.

 ~\\
\indent The only thing left to be proved, the most critical part towards the proof of Theorem \ref{thm:mf_iterative}, is the claim we made in \emph{Part One}. We are going to prove the claim as follow.

 ~\\
\textbf{Proof Sketch of Part One.}
The error associated with the $[h_{t,\lambda}(\pi)]_{i,\cdot}$ is a function of $\pi$ and $A_{i,\cdot}$. It can be decomposed into a summation of two terms, one only involves the ground truth $\Ztruth $ and the other involves  the deviation $\pi-\Ztruth $. That is,
\begin{align*}
\norm{[h_{t,\lambda}( \pi)]_{i,\cdot}-\Ztruth _{i,\cdot}}_1 \leq f_{i,1}(\Ztruth ,A_{i,\cdot}) + f_{i,2}(\pi-\Ztruth ,A_{i,\cdot}).
\end{align*}
Consequently,
\begin{align}\label{eqn:proof_step_one_sketch}
\norm{h_{t,\lambda}(\pi)-\Ztruth }_1 \leq \underbrace{\sum_{i=1}^nf_{i,1}(\Ztruth ,A_{i,\cdot})}_{\text{involves }\Ztruth } + \underbrace{\sum_{i=1}^nf_{i,2}(\pi-\Ztruth ,A_{i,\cdot})}_{\text{involves }\pi-\Ztruth }.
\end{align}
With a proper choice of $f_{\cdot,1}$ and $f_{\cdot,2}$, the first term on the RHS of Equation (\ref{eqn:proof_step_one_sketch}) leads to the minimax rate $n\exp(-(1-\eta)\bar n_\textmin I) $. Up to a constant not dependent on $\pi,\Ztruth $ or $A$, the second term can be written as
\begin{align*}
\sum_{i=1}^nf_{i,2}(\pi-\Ztruth ,A_{i,\cdot}) \lesssim \sum_a (\pi_{\cdot,a}-\Ztruth _{\cdot,a})^T(A-\E A)(A-\E A)^T(\pi_{\cdot,a}-\Ztruth _{\cdot,a}).
\end{align*}
In this way it is all about the random matrix $A-\E A$ and there exist sharp bounds on $\opnorm{A-\E A}$. Note that $\sum_a\norm{\pi_{\cdot,a}-\Ztruth_{\cdot,a}}^2\leq \sum_a\norm{\pi_{\cdot,a}-\Ztruth_{\cdot,a}}_1\leq \norm{\pi-\Ztruth}_1$. The second term ends up being upper bounded by $\norm{\pi-\pi^*}_1$ multiplied by a coefficient factor.

 ~\\
\textbf{Proof of Part One. }
Denote $z=r^{-1}(\Ztruth)$. By the definition of $h_{t,\lambda}(\cdot)$ in Equation (\ref{eqn:h}), we have
\begin{align*}
\norm{[h_{t,\lambda}( \pi)]_{i,\cdot}-\Ztruth_{i,\cdot}}_1&\leq \frac{2\sum_{a\neq \zi}\pipr_{i,a}\exp\left[2t\sum_{j\neq i}\pi_{j,a}(A_{i,j}-\lambda)\right]}{\sum_{a}\pipr_{i,a}\exp\left[2t\sum_{j\neq i}\pi_{j,a}(A_{i,j}-\lambda)\right]}\\
&\leq 2w\sum_{a\neq \zi}1\wedge\exp\left[2t\sum_{j\neq i}(\pi_{j,a}-\pi_{j,\zi})(A_{i,j}-\lambda)\right].
\end{align*}
Define $f(x)=1\wedge \exp(-x)$. It can be shown that for any $x_0<0$ and any integer $m\geq 1$ we have $f(x)\leq \exp(x_0)+\sum_{l=0}^{m-1}\exp(l x_0/m)\mathbb{I}\{x\geq (l+1) x_0/m\}$, which can be seen as a stepwise approximation of the continuous function $f(x)$. By taking $x_0=-(n_a+n_{\zi})I/2$ and letting $x=2t\sum_{j\neq i}(\pi_{j,a}-\pi_{j,\zi})(A_{i,j}-\lambda)$, we have
\begin{align*}
\norm{[h_{t,\lambda}( \pi)]_{i,\cdot}-\Ztruth_{i,\cdot}}_1&\leq 2w\sum_{a\neq z_i}\exp\left[-\frac{(n_a+n_{\zi})I}{2}\right]+2w\sum_{l=0}^{m-1}\Bigg[\exp\left[-\frac{l(n_a+n_{\zi})I}{2m}\right]\\
&\quad \times \sum_{a\neq \zi}\mathbb{I}\bigg[2t\sum_{j\neq i}(\pi_{j,a}-\pi_{j,\zi})(A_{i,j}-\lambda)\geq -\frac{(l+1)(n_a+n_{\zi})I}{2m}\bigg]\Bigg].
\end{align*}
We choose some $m\rightarrow\infty$ slowly such that 
\begin{align}\label{eqn:m}
m=o(\bar n_\textmin I)\text{ and }m=o([wnI/[k[n/\bar n_\textmin]^2]^{1/4}).
\end{align}
Thus, we have
\begin{align}
\norm{h_{t,\lambda}(\pi)-\Ztruth}_1&\leq 2wnk\exp(-\bar n_\textmin I)+2w\sum_{l=0}^{m-1}\sum_{a=1}^k\sum_{b\neq a}\Bigg[\exp\left[-\frac{l(n_a+n_b)I}{2m}\right]\nonumber\\
&\quad\times \sum_{i:\zi=b}\mathbb{I}\bigg[\sum_{j\neq i}(\pi_{j,a}-\pi_{j,b})(A_{i,j}-\lambda)\geq -\frac{(l+1)(n_a+n_b)I}{4mt}\bigg]\Bigg]\label{eqn:h_pi_l1}
\end{align}
where we use the fact that $\min_{a\neq b}(n_a+b_b)/2\geq \bar n_\textmin$. 

The key to the rest of the analysis is to understand Equation (\ref{eqn:h_pi_l1}) through the decomposition of the critical quantity $\sum_{j\neq i}(\pi_{j,a}-\pi_{j,b})(A_{i,j}-\lambda)$. We will show for any pair of $a,b\in[k]$ such that $a\neq b$, and any $i\in[n]$ such that $\zi=b$, it is equal to a summation of two terms: one only involves the ground truth $\Ztruth$, and the other involves the deviation $\pi-\Ztruth$. The former remains steady along iterations and contributes to the minimax rate, while the latter needs to be connected with the error $\norm{\pi-\Ztruth}_1$.

Let $\theta_{a,b}$ be a vector of length $n$ such that $[\theta_{a,b}]_j = \pi_{j,a} - \Ztruth_{j,a}+\Ztruth_{j,b}-\pi_{j,b},\forall j\in[n]$. Then we have
\begin{align}\label{eqn:pi_decompose_rewrite}
\sum_{j\neq i}(\pi_{j,a}-\pi_{j,b})(A_{i,j}-\lambda) &= \sum_{j\neq i}(\Ztruth_{j,a}-\Ztruth_{j,b})(A_{i,j}-\lambda)+\sum_{j\neq i}(\pi_{j,a} - \Ztruth_{j,a}+\Ztruth_{j,b}-\pi_{j,b})(A_{i,j}-\lambda)\\
&=\sum_{j\neq i}(\Ztruth_{j,a}-\Ztruth_{j,b})(A_{i,j}-\lambda)+\sum_{j\neq i} (A_{i,j}-\lambda)[\theta_{a,b}]_j\nonumber\\
&=\underbrace{\sum_{j\neq i}(\Ztruth_{j,a}-\Ztruth_{j,b})(A_{i,j}-\lambda)}_{\text{involves }\Ztruth} + \underbrace{(A_{i,\cdot}-\E A_{i,\cdot})\theta_{a,b} +\sum_{j\neq i}(\E A_{i,j}-\lambda)[\theta_{a,b}]_j}_{\text{involves }\pi-\Ztruth}.\nonumber
\end{align}

With the help of Equation (\ref{eqn:pi_decompose_rewrite}), Equation (\ref{eqn:h_pi_l1}) can be written as
\begin{align*}
&\norm{h_{t,\lambda}(\pi)-\Ztruth}_1\\
&\leq 2wnk\exp(-\bar n_\textmin I)+2w\sum_{l=0}^{m-1}\sum_{a=1}^k\sum_{b\neq a}\Bigg[\exp\left[-\frac{l(n_a+n_b)I}{2m}\right]\\
&\quad\times \sum_{i:\zi=b}\mathbb{I}\bigg[\sum_{j\neq i}(\Ztruth_{j,a}-\Ztruth_{j,b})(A_{i,j}-\lambda)\geq -\frac{(l+3/2)(n_a+n_b)I}{4mt}-\sum_{j\neq i}(\E A_{i,j}-\lambda)[\theta_{a,b}]_j\bigg]\Bigg]\\
&+2w\sum_{a=1}^k\sum_{b\neq a}\Bigg[\Bigg[\sum_{l=0}^{m-1}\exp\left[-\frac{l(n_a+n_b)I}{2m}\right]\Bigg]\times\sum_{i:\zi=b}\mathbb{I}\left[(A_{i,\cdot}-\E A_{i,\cdot})\theta_{a,b}\geq \frac{\bar n_\textmin I}{4mt}\right]\Bigg].
\end{align*}
Equations (\ref{eqn:assumption_nI}) and (\ref{eqn:m}) imply $\sum_{l=0}^{m-1}\exp\left[-l(n_a+n_b)I/(2m)\right]\leq 2$. Thus, we have
\begin{align*}
\norm{h_{t,\lambda}(\pi)-\Ztruth}_1\leq 2wnk\exp(-\bar n_\textmin I)+\underbrace{2wL_1^\textsum }_{\text{involves } \Ztruth}+ \underbrace{4wL_2^\textsum}_{\text{involves  }\pi-\Ztruth},
\end{align*}
where
\begin{align*}
L_1^\textsum\triangleq\sum_{l=0}^{m-1}\sum_{a=1}^k\sum_{b\neq a}\exp\left[-\frac{l(n_a+n_b)I}{2m}\right]\sum_{i:\zi=b}L_{1,i}(a,b,l),
\end{align*}
with $L_{1,i}(a,b,l)\triangleq\mathbb{I}[\sum_{j\neq i}(\Ztruth_{j,a}-\Ztruth_{j,b})(A_{i,j}-\lambda)\geq -(l+3/2)(n_a+n_b)I/(4mt)-\sum_{j\neq i}(\E A_{i,j}-\lambda)[\theta_{a,b}]_j]$, and 
\begin{align*}
L_2^\textsum\triangleq \sum_{a=1}^k\sum_{b\neq a}\sum_{i:\zi=b}\mathbb{I}\left[(A_{i,\cdot}-\E A_{i,\cdot})\theta_{a,b}\geq \frac{\bar n_\textmin I}{4mt}\right].
\end{align*}
In this way we turn $\norm{h_{t,\lambda}(\pi)-\Ztruth}_1$ into calculations on $L_1^\textsum$ and $L_2^\textsum$, where the former only involves  the ground truth $\Ztruth$ and the latter only involves  the deviation $\pi-\Ztruth$.

We can obtain upper bounds on $L_{1}^\text{sum}$ and $L_{2}^\text{sum}$ as follows. Their proofs are deferred to the end of this section.
\begin{itemize}
\item For $L_{1}^\text{sum}$, there exists a sequence $\eta''=o(1)$ such that with probability at least $1-\exp[-2(\bar n_\textmin I)^\frac{1}{2}]$, we have
\begin{align}\label{eqn:upper_l1sum}
L_1^\textsum \leq nmk\exp\left[-(1-2\eta'')\bar n_\textmin I\right].
\end{align}
\item For $L_{2}^\text{sum}$, there exist constants $c$ and $r$ such that with probability at least $1-n^{-r} -\exp(-5np^*)$, we have
\begin{align}\label{eqn:upper_l2sum}
L_{2}^\textsum\leq \frac{cknp^* \norm{\pi-\Ztruth}_1}{(\bar n_\textmin I/(mt^*))^2} +\frac{cn^2kp^*\exp(-5np^*)}{\bar n_\textmin I/(mt^*)}.
\end{align}
\end{itemize}
Thus, we have
\begin{align*}
\norm{h_{t,\lambda}(\pi)-\Ztruth}_1&\leq 2wnk\exp(-\bar n_\textmin I)+2wnmk\exp\left[-(1-2\eta'')\bar n_\textmin I\right]\\
&\quad  +\frac{4cwknp^* \norm{\pi-\Ztruth}_1}{(\bar n_\textmin I/(mt^*))^2} +\frac{4cwkn^2p^*\exp(-5np^*)}{\bar n_\textmin I/(mt^*)},
\end{align*}
with probability at least $1-\exp[-2(\bar n_\textmin I)^\frac{1}{2}]-n^{-r}-\exp(-5np^*)$. By Propositions \ref{prop:I} and \ref{prop:lambda}, we have $p^*t^{*2}\asymp I$. Then due to Equation (\ref{eqn:m}), we have
\begin{align*}
\frac{w knp^* }{(\bar n_\textmin I/(mt^*))^2}\asymp wm^2\left[\frac{n}{\bar n_\textmin}\right]^2\frac{k}{nI}=o\left[\frac{1}{\sqrt{nI/[wk[n/\bar n_\textmin]^2]}}\right],
\end{align*}
and
\begin{align*}
\frac{wkn^2p^*\exp(-5np^*)}{\bar n_\textmin I/(mt^*)}\asymp wmk\frac{\sqrt{np^*}}{\sqrt{nI}}\left[\frac{n}{\bar n_\textmin}\right]n\exp(-5np^*)\leq n\exp(-5\bar n_\textmin I).
\end{align*}
Thus, with probability at least $1-\exp[-(\bar n_\textmin I)^\frac{1}{2}]-n^{-r}$, there exists some $\eta=o(1)$, such that
\begin{align*}
\norm{h_{t,\lambda}(\pi)-\Ztruth}_1\leq n\exp(-(1-\eta)\bar n_\textmin I) + \frac{\norm{\pi-\Ztruth}_1}{\sqrt{nI/[wk[n/\bar n_\textmin]^2]}}.
\end{align*}

~\\
\indent The proof for \emph{Part One} is complete. The very last thing remained to be obtained is upper bounds on $L_{1}^\text{sum}$ and $L_{2}^\text{sum}$, i.e., Equations (\ref{eqn:upper_l1sum}) and (\ref{eqn:upper_l2sum}). Recall the definition of $\theta_{a,b}$. We have some properties on $\theta_{a,b}$ which will be useful in the analysis for $L_{1}^\text{sum}$ and $L_{2}^\text{sum}$:  $\norm{\theta_{a,b}}_\infty\leq 2$ and
\begin{align}\label{eqn:theta_ab}
\norm{\theta_{a,b}}_1\leq \norm{\pi_{\cdot,a}-\Ztruth_{\cdot,a}}_1+\norm{\pi_{\cdot,b}-\Ztruth_{\cdot,b}}_1\leq \norm{\pi-\Ztruth}_1\leq \gamma \bar n_\textmin,
\end{align}
and
\begin{align}\label{eqn:theta_sum}
\sum_{a=1}^k\sum_{b\neq a}\norm{\theta_{a,b}}_1\leq 2k\sum_a\norm{\pi_{\cdot,a}-\Ztruth_{\cdot,a}}_1 \leq 2k\norm{\pi-\Ztruth}_1.
\end{align}
~\\
\noindent\textit{1. Bounds on $L_{1}^\text{sum}$.} \quad By applying Markov inequality, we have
\begin{align*}
&\E L_{1,i}(a,b,l)\\
&=\p\left[t^*\sum_{j\neq i}(\Ztruth_{j,a}-\Ztruth_{j,b})(A_{i,j}-\lambda)\geq -\frac{t^*(l+3/2)(n_a+n_b)I}{4mt}-t^*\sum_{j\neq i}(\E A_{i,j}-\lambda)[\theta_{a,b}]_j\right]\\
&\leq \exp\left[\frac{t^*(l+3/2)(n_a+n_b)I}{4mt}+t^*(\E A_{i,j}-\lambda 1_n^T)\theta_{a,b}\right] \E\exp\left[t^*\sum_{j\neq i}(\Ztruth_{j,a}-\Ztruth_{j,b})(A_{i,j}-\lambda)\right].
\end{align*}
With the help of Proposition \ref{prop:chernoff_I}, we have
\begin{align*}
&\E\exp\left[t^*\sum_{j\neq i}(\Ztruth_{j,a}-\Ztruth_{j,b})(A_{i,j}-\lambda)\right]\\
&= \exp(-t^*(\lambda-\lambda^*)(n_a-n_b))\exp(-t^*\lambda^*(n_a-n_b))\prod_{j\neq i}\E\exp(t^*(\Ztruth_{j,a}-\Ztruth_{j,b})A_{i,j})\\
&= \exp(-t^*(\lambda-\lambda^*)(n_a-n_b))\left[e^{-t\lambda}\frac{\E e^{tX}}{\E e^{-tY}}\right]^\frac{n_a-n_b}{2}\left[\E e^{tX}\E e^{-tY}\right]^\frac{n_a+n_b}{2}\\
&= \exp(-t^*(\lambda-\lambda^*)(n_a-n_b))\exp\left[-\frac{(n_a+n_b)I}{2}\right].
\end{align*}
Hence
\begin{align}\label{eqn:expect_L_1}
&\E L_1^\textsum \\
&= \sum_{l=0}^{m-1}\sum_{a=1}^k\sum_{b\neq a}\Bigg[\exp\left[-\frac{l(n_a+n_b)I}{2m}\right] \exp\left[\frac{t^*(l+3/2)(n_a+n_b)I}{4mt}+t^*\sum_{j\neq i}(\E A_{i,j}-\lambda)[\theta_{a,b}]_j\right]\nonumber\\
&\quad\times\exp(-t^*(\lambda-\lambda^*)(n_a-n_b))\exp\left[-\frac{(n_a+n_b)I}{2}\right]\Bigg]\nonumber\\
&\leq \sum_{l=0}^{m-1}\sum_{a=1}^k\sum_{b\neq a}\exp\left[-\frac{(1+\frac{l}{m}-\frac{t^*(l+3/2)}{2mt})(n_a+n_b)I}{2}-t^*(\lambda-\lambda^*)(n_a-n_b)+t^*\sum_{j\neq i}(\E A_{i,j}-\lambda)[\theta_{a,b}]_j\right].\nonumber
\end{align}
We are going to show $-(1-\eta'')\bar n_\textmin I$ upper bounds terms in the exponent of RHS of Equation (\ref{eqn:expect_L_1}) by some $\eta''=o(1)$. We first present some properties of $\lambda^*,t^*$ and $I$ that will be helpful:
\begin{align}
& I \asymp (p^*-q^*)^2/p^*,\label{eqn:I_t}\\
&\lambda^*\in(q^*,p^*) \label{eqn:lambda_interval}, \\
\text{and }& t^*\asymp (p^*-q^*)/p^*. \label{eqn:t_star_simple}
\end{align}
Here Equations (\ref{eqn:I_t}) and (\ref{eqn:lambda_interval})  are proved by Propositions \ref{prop:I}  and \ref{prop:lambda}  respectively. Equation (\ref{eqn:t_star_simple}) is due to $t^*\asymp \log(1+(p^*-q^*)/q^*)\asymp (p^*-q^*)/p^*$ under the assumption that $p^*,q^*=o(1)$, $p^*\asymp q^*$.

The first term in the exponent of Equation (\ref{eqn:expect_L_1}) is upper bounded by $-(1-7/(8m))\bar n_\textmin I$ by the assumption $t^*/t=1+o(1)$. Since $|t^*(\lambda -\lambda^*)|\leq \eta't^*(p^*-q^*)$, by Equations (\ref{eqn:I_t}) and (\ref{eqn:t_star_simple}) the second term is upper bounded by $\eta'\bar n_\textmin I$ up to a constant factor. For the last term in the exponent of Equation (\ref{eqn:expect_L_1}), since $|\lambda-\lambda^*|\leq \eta'(p^*-q^*)$ we have
\begin{align*}
t^*\bigg|\sum_{j\neq i}(\E A_{i,j}-\lambda)[\theta_{a,b}]_i\bigg| & \leq t^*\bigg|\sum_{j\neq i}(\E A_{i,j}-\lambda^*)[\theta_{a,b}]_i\bigg| + t^*\bigg|\sum_{j\neq i}(\lambda^*-\lambda)[\theta_{a,b}]_i\bigg|\\
&\leq (1+\eta') t^*(p^*-q^*)\norm{\theta_{a,b}}_1\\
&\leq (1+\eta') t^*(p^*-q^*) \gamma\bar n_\textmin\\
&\lesssim \gamma \bar n_\textmin I,
\end{align*}
where we use Equations (\ref{eqn:theta_ab}) and (\ref{eqn:I_t}) - (\ref{eqn:t_star_simple}).

As a consequence, there exists a sequence $\eta''=o(1)$ that goes to zero slower than $m^{-1}, \gamma, \eta'$, such that the summation of three terms in the exponent of the RHS of Equation (\ref{eqn:expect_L_1}) is upper bounded by $-(1-\eta'')\bar n_\textmin I$. Thus, Equation (\ref{eqn:expect_L_1}) can be written as
\begin{align*}
\E L_1^\textsum \leq nmk\exp\left[-(1-\eta'')\bar n_\textmin I\right].
\end{align*}
Since $\eta''$ goes to 0 slower than $m^{-1}$, we have $\eta ''\geq m^{-1}\geq (\bar n_\textmin I)^\frac{1}{4}$ by Equation (\ref{eqn:m}). Then by applying Markov inequality, we have
\begin{align*}
\p\left[L_1^\textsum \geq nmk\exp\left[-(1-2\eta'')\bar n_\textmin I\right]\right] \leq \exp\left[-\eta''\bar n_\textmin I\right]\leq \exp\left[-2(\bar n_\textmin I)^\frac{1}{2}\right].
\end{align*}
That is, with probability at least $1-\exp[-2(\bar n_\textmin I)^\frac{1}{2}]$, Equation (\ref{eqn:upper_l1sum}) holds.

~\\
\noindent\textit{2. Bounds on $L_{2}^\textsum$.} Depending on whether the network is dense or sparse, we consider two scenarios.

\textit{(1) Dense Scenario: $q^*\geq (\log n)/n$.} In this scenario, we have a sharp bound on $\opnorm{A-\E A}$. First we observe that
\begin{align*}
\sum_{i:\zi=b}[(A_{i,\cdot}-\E A_{i,\cdot})\theta_{a,b}]^2& =\theta_{a,b}^T \sum_{i:\zi=b}[(A_{i,\cdot}-\E A_{i,\cdot})^T(A_{i,\cdot}-\E A_{i,\cdot})]\theta_{a,b}\\
&\leq \theta_{a,b}^T \sum_i[(A_{i,\cdot}-\E A_{i,\cdot})^T(A_{i,\cdot}-\E A_{i,\cdot})]\theta_{a,b}\\
&= \theta_{a,b}^T [(A-\E A)^T(A-\E A)]\theta_{a,b}.
\end{align*}
By applying Markov inequality, we have 
\begin{align*}
L_2^\text{sum}& \leq \sum_{a=1}^k\sum_{b\neq a} \frac{\theta_{a,b}^T [(A-\E A)^T(A-\E A)]\theta_{a,b}}{(\bar n_\textmin I/(4mt))^2}.
\end{align*}
Since $\norm{\theta_{a,b}}_\infty\leq 2$, we have $\norm{\theta_{a,b}}^2\leq 2\norm{\theta_{a,b}}_1$. Lemma \ref{lem:A_opnorm} shows $\opnorm{A-\E A}\leq \sqrt{c_1np}$ holds with probability at least $1-n^{-r}$ for some constants $c_1,r>0$. Together with  Equation (\ref{eqn:theta_sum}), we have
\begin{align*}
\sum_{a=1}^k\sum_{b\neq a}\theta_{a,b}^T [(A-\E A)^T(A-\E A)]\theta_{a,b}&\leq  \sum_{a=1}^k\sum_{b\neq a} \opnorm{A-\E A}^2\norm{\theta_{a,b}}^2\\
&\leq \sum_{a=1}^k\sum_{b\neq a} 2c_1 np \norm{\theta_{a,b}}_1\\
&\leq 4c_1knp \norm{\pi-\Ztruth}_1.
\end{align*}
Thus, with probability at least $1-n^{-r}$,
\begin{align*}
L_2^\text{sum}&\leq \frac{4c_1knp \norm{\pi-\Ztruth}_1}{(\bar n_\textmin I/(4mt))^2}.
\end{align*}

\textit{(2) Sparse Scenario: $q^*<(\log n)/n$}. When the network is sparse, the previous upper bound on $\opnorm{A-\E A}$ no longer holds. Instead, removing nodes with large degrees is required to yield provably sharp bound on $\opnorm{A-\E A}$. Define $S=\{i\in[n],\sum_{j}A_{i,j}\geq 20np^*\}$.	We define $\tilde A,\tilde P$ such that $\tilde A_{i,j}= A_{i,j}\mathbb{I}\{i,j\notin S\}$ and $\tilde P_{i,j}=(\E A_{i,j })\mathbb{I}\{i,j\notin S\}$. Then we have the decomposition as
\begin{align*}
L_2(a,b)&\triangleq \sum_{i:\zi=b}\mathbb{I}\left[(A_{i,\cdot}-\E A_{i,\cdot})\theta_{a,b}\geq \frac{\bar n_\textmin I}{4mt}\right]\\
&\leq  \sum_{i:\zi=b}\mathbb{I}\left[(\tilde A_{i,\cdot}-\tilde P_{i,\cdot})\theta_{a,b}\geq \frac{\bar n_\textmin I}{8mt}\right]\\
&\quad + \sum_{i:\zi=b}\mathbb{I}\left[\sum_{j\neq i} (A_{i,j}-\E A_{i,j})[\theta_{a,b}]_{i,j}\mathbb{I}\{i\in S\text{ or }j\in S\}\geq \frac{\bar n_\textmin I}{8mt}\right]\\
&\triangleq L_{2,1}(a,b)+L_{2,2}(a,b).
\end{align*}
Define $L_{2,1}^\text{sum}\triangleq \sum_{a=1}^k\sum_{b\neq a} L_{2,1}(a,b)$. We have
\begin{align*}
L_{2,1}^\text{sum} \leq \sum_{a=1}^k\sum_{b\neq a} \frac{\theta_{a,b}^T [(\tilde A-\tilde P)^T(\tilde A-\tilde P)]\theta_{a,b}}{(\bar n_\textmin I/(8mt))^2}\leq \sum_{a=1}^k\sum_{b\neq a} \frac{2\opnorm{\tilde A - \tilde P}^2\norm{\theta_{a,b}}_1}{(\bar n_\textmin I/(8mt))^2}.
\end{align*}
Lemma \ref{lem:A_trunc_opnorm} shows $\opnorm{\tilde A-\tilde P}\leq \sqrt{c_2np}$ holds with probability at least $1-n^{-1}$ for some constant $c_2>0$. Then we have
\begin{align*}
L_{2,1}^\text{sum}\leq \frac{4c_2 knp \norm{\pi-\Ztruth}_1}{(\bar n_\textmin I/(8mt))^2}.
\end{align*}
Lemma \ref{lem:sum_zi} shows $\sum_{i,j}| A_{i,j}-\E A_{i,j}|\mathbb{I}\{i\in S\}\leq 20n^2p^*\exp(-5np^*)$ holds with probability at least $1-\exp(-5np^*)$. Then by applying Markov inequality, we have
\begin{align*}
L_{2,2}^\text{sum}&\triangleq \sum_{a=1}^k\left[\sum_{b\neq a}L_{2,2}(a,b)\right]\\
&\leq \sum_{a=1}^k\sum_{i,j=1}^n\frac{| A_{i,j}-\E A_{i,j}||[\theta_{a,b}]_{i,j}|\mathbb{I}\{i\in S\text{ or }j\in S\}}{\bar n_\textmin I/(8mt)}\\
&\leq \sum_{a=1}^k\frac{4\sum_{i,j}| A_{i,j}-\E A_{i,j}|\mathbb{I}\{i\in S\}}{\bar n_\textmin I/(8mt)}\\
&\leq  \frac{80 n^2kp^*\exp(-5np^*)}{\bar n_\textmin I/(8mt)}.
\end{align*}

As a consequence, we have
\begin{align*}
L_2^\text{sum}\leq L_{2,1}^\text{sum}+L_{2,2}^\text{sum}\leq \frac{4c_2 knp^* \norm{\pi-\Ztruth}_1}{(\bar n_\textmin I/(8mt))^2}+ \frac{80 n^2kp^*\exp(-5np^*)}{\bar n_\textmin I/(8mt)},
\end{align*}
with probability at least $1-n^{-1} -\exp(-5np^*)$. By the bounds on $L_1^\textsum$ and $L_2^\textsum$, and due to $t/t^*=1+o(1)$, we obtain Equation (\ref{eqn:upper_l2sum}).

\begin{supplement}
\sname{Supplement A}\label{suppA}
\stitle{Supplement to ``Theoretical and Computational Guarantees of Mean Field  Variational Inference for Community Detection''}
\slink[url]{url to be specified}
\sdescription{In the supplement \cite{zhang2017meanSupp}, we provide the detailed implementations of the batched Gibbs sampling and an iterative algorithm for MLE in Algorithm \ref{alg:gibbs} and Algorithm \ref{alg:mle} respectively. We include proof of Theorem \ref{thm:global}, Theorem \ref{thm:gibbs} and Theorem \ref{thm:mle}. We also include all the auxiliary propositions and lemmas in the supplement.
}
\end{supplement}

\bibliographystyle{plainnat}
\bibliography{variational}

\newpage
\thispagestyle{empty}
\setcounter{page}{1}
\begin{center}
\MakeUppercase{\large Supplement to ``Theoretical and Computational Guarantees of Mean Field Variational Inference for Community Detection''}
\medskip

{BY Anderson Y. Zhang and Harrison H.~Zhou}
\medskip

{Yale University}
\end{center}

\appendix

\section{Additional Algorithms}

In this section, we provide the detailed implementations of the batched Gibbs sampling and an iterative algorithm of MLE for community detection.

\subsection{Batched Gibbs Sampling}\label{sec:algo_gibbs}
~
\begin{algorithm}[ht]
\SetAlgoLined
\KwIn{Adjacency matrix $A$, number of communities $k$, hyperparameters $\pipr,\alphapr_p,\betapr_p,\alphapr_q,\betapr_q$, some initializers $Z^{(0)}$, number of iterations $S$.}
\KwOut{Gibbs sampling $\hat Z,\hat p,\hat q$.}
 \For {$s=1,2,\ldots, S$}  {
 \nl Update $\alpha_p^{(s)},\beta_p^{(s)},\alpha_q^{(s)},\beta_q^{(s)}$ by 
 \begin{align*}
  &\alpha^{(s)}_p = \alphapr_p + \sum_{a=1}^k\sum_{i<j}A_{i,j}Z^{(s-1)}_{i,a}Z^{(s-1)}_{j,a},\beta^{(s)}_p=\betapr_p+\sum_{a=1}^k\sum_{i<j}(1-A_{i,j})Z^{(s-1)}_{i,a}Z^{(s-1)}_{j,a},\\
  &\alpha^{(s)}_q = \alphapr_q + \sum_{a\neq b}\sum_{i<j}A_{i,j}Z^{(s-1)}_{i,a}Z^{(s-1)}_{j,b},\beta^{(s)}_q=\betapr_q+\sum_{a\neq b}\sum_{i<j}(1-A_{i,j})Z^{(s-1)}_{i,a}Z^{(s-1)}_{j,b}.
 \end{align*}
 
Then generate $p^{(s)}\sim\tbeta(\alpha^{(s)}_p,\beta^{(s)}_p)$ and $q^{(s)}\sim\tbeta(\alpha^{(s)}_q,\beta^{(s)}_q)$ independently.

\nl Define
\begin{align*}
t^{(s)}=\frac{1}{2}\log \frac{p^{(s)}(1-q^{(s)})}{(1-p^{(s)})q^{(s)}},\quad\text{and }\lambda^{(s)}=\frac{1}{2t^{(s)}}\log\frac{1-q^{(s)}}{1-p^{(s)}}.
\end{align*}

Then update $\pi^{(s)}$ with
\begin{align*}
\pi^{(s)}=h_{t^{(s)},\lambda^{(s)}}(Z^{(s-1)}),
\end{align*} 

where $h_{t,\lambda}(\cdot)$ is defined as in Equation (\ref{eqn:h}). Independently generate each row of $Z^{(s)}$ from distributions
\begin{align*}
\p(Z^{(s)}_{i,\cdot}=e_a)=\pi_{i,a}^{(s)},\forall a\in[k],\forall i\in[n].
\end{align*} 
 }
 \nl We have $\hat z = z^{(S)}$, $\hat p=p^{(S)}$ and $\hat q=q^{(S)}$.
\caption{Batched Gibbs Sampling \label{alg:gibbs}}
\end{algorithm}

\subsection{An Iterative Algorithm for Maximum Likelihood Estimation}\label{sec:alg_mle}

We first define a mapping $h':\Pi_0\rightarrow\Pi_0$ as follows
\begin{align}\label{eqn:h_prime}
[h'_{\lambda}(Z)]_{i,a} = \mathbb{I}\left[a=\argmax_b \sum_{j\neq i}Z_{i,b}(A_{i,j}-\lambda)\right].
\end{align}
Here if the maximizer is not unique, we simply pick the smallest index.

\begin{algorithm}[ht]
\SetAlgoLined
\KwIn{Adjacency matrix $A$, number of communities $k$, some initializers $z^{(0)}$, number of iterations $S$.}
\KwOut{Estimation $\hat Z,\hat p,\hat q$.}
 \For {$s=1,2,\ldots, S$}  {
 \nl Update $p^{(s)},q^{(s)}$ by
 \begin{align*}
 p^{(s)} = \frac{\sum_{a=1}^k\sum_{i<j}A_{i,j}Z^{(s-1)}_{i,a}Z^{(s-1)}_{j,a}}{\sum_{a=1}^k\sum_{i<j}(1-A_{i,j})Z^{(s-1)}_{i,a}Z^{(s-1)}_{j,a}}
 \end{align*}
 and
 \begin{align*}
 q^{(s)}=\frac{\sum_{a\neq b}\sum_{i<j}A_{i,j}Z^{(s-1)}_{i,a}Z^{(s-1)}_{j,b}}{\sum_{a\neq b}\sum_{i<j}(1-A_{i,j})Z^{(s-1)}_{i,a}Z^{(s-1)}_{j,b}}.
 \end{align*}
 
\nl Define
\begin{align*}
t^{(s)}=\frac{1}{2}\log \frac{p^{(s)}(1-q^{(s)})}{(1-p^{(s)})q^{(s)}},\quad\text{and }\lambda^{(s)}=\frac{1}{2t^{(s)}}\log\frac{1-q^{(s)}}{1-p^{(s)}}.
\end{align*}

Then update $\pi^{(s)}$ with
\begin{align*}
Z^{(s)}=h'_{\lambda^{(s)}}(Z^{(s-1)}),
\end{align*} 

where $h'_{\lambda}(\cdot)$ is defined as in Equation (\ref{eqn:h_prime}).
 }
 \nl We have $\hat z = z^{(S)}$, $\hat p=p^{(S)}$ and $\hat q=q^{(S)}$.
\caption{An Iterative Algorithm for MLE \label{alg:mle}}
\end{algorithm}

\section{Proofs of Other Theorems}
In this section, we first validate Theorem \ref{thm:mf_iterative} when $\ell(\pi^{(0)},\pi^*)$ is in a constant order of $\bar n_\textmin$, which complements the proof presented in Section \ref{sec:proof_mf_iterative}. The we give proofs of theorems stated in Section \ref{sec:diss}, including Theorem \ref{thm:global}, Theorem \ref{thm:gibbs} and Theorem \ref{thm:mle}.

\subsection{Proof of Theorem \ref{thm:mf_iterative} for the case $\ell(\pi^{(0)},\pi^*)$ in a constant order of $\bar n_\textmin$}\label{sec:proof_mf_iterative_constant}

For any $\pi$ such that $\ell(\pi,\pi^*)\leq c_\tinit \bar n_\textmin$, we are going to show when $c_\tinit$ is sufficiently small
\begin{align}\label{eqn:iterative_constant_main}
\ell(h_{t,\lambda}(\pi),Z^*)\leq n\exp(-\bar n_\textmin I/25) + \frac{\ell(\pi,Z^*)}{2\sqrt{nI/[wk[n/\bar n_\textmin]^2]}},
\end{align}
with probability at least $1-\exp(-\bar n_\textmin I/10)-n^{-r}$ for some constant $r>0$. If it holds, for any $\pi^{(0)}$ such that $\ell(\pi^{(0)},Z^*)=c\bar n_\textmin$ for some constant $c\leq c_\tinit$, the term $n\exp(-\bar n_\textmin I/25)$ is dominated by $\ell(\pi^{(0)},Z^*)/\sqrt{nI/[wk[n/\bar n_\textmin]^2]}$ which implies
\begin{align*}
\ell(\pi^{(1)},Z^*)\leq n\exp(-(1-\eta)/\bar n_\textmin I) + \frac{\ell(\pi^{(0)},Z^*)}{\sqrt{nI/[wk[n/\bar n_\textmin]^2]}}.
\end{align*}
It also implies $\ell(\pi^{(1)},Z^*)=o(\bar n_\textmin)$, which means after the first iteration, the results in Section \ref{sec:proof_mf_iterative} can be directly applied and the proof is complete.

~\\
\noindent The proof of Equation (\ref{eqn:iterative_constant_main}) mainly follows the proof of \emph{Part One} in Section \ref{sec:proof_mf_iterative}. We have
\begin{align*}
\norm{[h_{t,\lambda}( \pi)]_{i,\cdot}-\Ztruth_{i,\cdot}}_1&\leq 2w\sum_{a\neq \zi}1\wedge\exp\left[2t\sum_{j\neq i}(\pi_{j,a}-\pi_{j,\zi})(A_{i,j}-\lambda)\right].
\end{align*}
Note that the inequality $1\wedge \exp(-x)\leq f(x_0)+\mathbb{I}\{x\geq x_0\}$ holds for any $x_0\geq 0$. By taking $x_0= (n_a+n_{\zi})I/4$, we have
\begin{align*}
\norm{[h_{t,\lambda}( \pi)]_{i,\cdot}-\Ztruth_{i,\cdot}}_1 \leq 2w\sum_{a\neq z_i}\left[\exp\left[-\frac{(n_a+n_{\zi})I}{4}\right] + \mathbb{I}\left[\sum_{j\neq i}(\pi_{j,a}-\pi_{j,\zi})(A_{i,j}-\lambda) \geq -\frac{(n_a+n_{\zi})I}{8t}\right]\right],
\end{align*}
and consequently, 
\begin{align*}
\norm{h_{t,\lambda}( \pi)-\Ztruth}_1 &\leq 2wnk\exp(-\bar n_\textmin I/2) \\
&\quad +2w\sum_{a=1}^k\sum_{b\neq a}\sum_{i:z_i=b}\mathbb{I}\bigg[\sum_{j\neq i}(\pi_{j,a}-\pi_{j,b})(A_{i,j}-\lambda)\geq -\frac{(n_a+n_{\zi})I}{8t}\bigg]\Bigg].
\end{align*}
Define $\theta_{a,b}$ the same way as in Section \ref{sec:proof_mf_iterative}, and by the same argument, we have 
\begin{align*}
&\norm{h_{t,\lambda}( \pi)-\Ztruth}_1 \leq 2wnk\exp(-\bar n_\textmin I/2) +2w\sum_{a=1}^k\sum_{b\neq a}\sum_{i:z_i=b}\mathbb{I}\left[(A_{i,\cdot}-\E A_{i,\cdot})\theta_{a,b}\geq \frac{\bar n_\textmin I}{8t}\right]\\
&\quad +2w\sum_{a=1}^k\sum_{b\neq a}\sum_{i:z_i=b}\mathbb{I}\bigg[\sum_{j\neq i}(Z^*_{j,a}-Z^*_{j,b})(A_{i,j}-\lambda)\geq -\frac{(n_a+n_b)I}{4t}-\sum_{j\neq i}(\E A_{i,j}-\lambda)[\theta_{a,b}]_j\bigg].
\end{align*}

From Lemma \ref{lem:concentration_t_lambda}, when $c_\tinit$ is sufficiently small, with probability at least $1-e^35^{-n}$ we have
\begin{align}\label{eqn:proof_constant_t_lambda}
\max\left\{\frac{|t-t^*|}{(p^*-q^*)/p^*},\frac{|\lambda-\lambda^*|}{(p^*-q^*)}\right\} \leq 24c_0c_\tinit.
\end{align}
Proposition \ref{prop:lambda} shows that $\lambda^*\in(q^* + c(p^*-q^*), q^*+(1-c)(p^*-q^*))$ for some positive constant $0<c<1/2$. Therefore, when $c_\tinit$ is sufficiently small, we have $\lambda \in(q^*,p^*)$. Thus,
\begin{align*}
\left|\sum_{j\neq i}(\E A_{i,j}-\lambda)[\theta_{a,b}]_j\right|\leq (p^*-q^*) \norm{\theta_{a,b}}_1 \leq (p^*-q^*) \norm{\pi-Z^*}_1\leq c_\tinit (p^*-q^*) \bar n_\textmin,
\end{align*}
where we use Equation (\ref{eqn:theta_ab}). By Equations (\ref{eqn:I_t}) - (\ref{eqn:t_star_simple}), it is smaller than $(n_a+n_{z_i})/(8t)$ when $c_\tinit$ is sufficiently small. As a consequence, we have
\begin{align*}
&\norm{h_{t,\lambda}( \pi)-\Ztruth}_1 \leq 2wnk\exp(-\bar n_\textmin I/2) +2w\sum_{a=1}^k\sum_{b\neq a}\sum_{i:z_i=b}\mathbb{I}\left[(A_{i,\cdot}-\E A_{i,\cdot})\theta_{a,b}\geq \frac{\bar n_\textmin I}{8t}\right]\\
&\quad +2w\sum_{a=1}^k\sum_{b\neq a}\sum_{i:z_i=b}\mathbb{I}\bigg[\sum_{j\neq i}(Z^*_{j,a}-Z^*_{j,b})(A_{i,j}-\lambda)\geq -\frac{(n_a+n_b)I}{8t}\bigg].
\end{align*}
Define $L_1^\textsum=\sum_{a=1}^k\sum_{b\neq a}\sum_{i:z_i=b}\mathbb{I}\left[\sum_{j\neq i}(Z^*_{j,a}-Z^*_{j,b})(A_{i,j}-\lambda)\geq -(n_a+n_b)I/(8t)\right]$ and $L_2^\textsum=\sum_{a=1}^k\sum_{b\neq a}\sum_{i:z_i=b}\mathbb{I}\left[(A_{i,\cdot}-\E A_{i,\cdot})\theta_{a,b}\geq \bar n_\textmin I/(8t)\right]$. Our analysis on them is quite similar to that in Section \ref{sec:proof_mf_iterative}. By Markov inequality,
\begin{align*}
\E L_1^\textsum&=\sum_{a=1}^k\sum_{b\neq a}\sum_{i:z_i=b}\p\left[t^*\sum_{j\neq i}(Z^*_{j,a}-Z^*_{j,b})(A_{i,j}-\lambda)\geq -t^*(n_a+n_b)I/(8t)\right]\\
&\leq \sum_{a=1}^k\sum_{b\neq a}\sum_{i:z_i=b} \exp\left[\frac{t^*(n_a+n_b)I}{8t} -t^*(\lambda-\lambda^*)(n_a-n_b)\right]\E \exp\left[t^*\sum_{j\neq i}(Z^*_{j,a}-Z^*_{j,b})(A_{i,j}-\lambda^*)\right]\\
&\leq \sum_{a=1}^k\sum_{b\neq a}\sum_{i:z_i=b} \exp\left[\frac{t^*(n_a+n_b)I}{8t} -t^*(\lambda-\lambda^*)(n_a-n_b)-\frac{(n_a+n_b)I}{2}\right].
\end{align*}
By Equations (\ref{eqn:I_t}) -  (\ref{eqn:t_star_simple}) and (\ref{eqn:proof_constant_t_lambda}), when $c_\tinit$ is small enough, $t^*/t\leq 2$ and $t^*|\lambda-\lambda^*|\leq I/6$. Thus
\begin{align*}
\E L_1^\textsum\leq nk\exp(-\bar n_\textmin I/12).
\end{align*}
Hence, with probability at least $1-\exp(-\bar n_\textmin I/24)$,
\begin{align*}
L_1^\textsum\leq nk\exp(-\bar n_\textmin I/24).
\end{align*}
For $L_2^\textsum$ we use the same argument as in Section \ref{sec:proof_mf_iterative} and obtain
\begin{align*}
L_2^\textsum\leq \frac{4c_2 knp^* \norm{\pi-\Ztruth}_1}{(\bar n_\textmin I/(8t))^2}+ \frac{80 n^2kp^*\exp(-5np^*)}{\bar n_\textmin I/(8t)},
\end{align*}
with probability at least $1-n^{-r} -\exp(-5np^*)$ for some constants $r,c_1,c_2>0$. Recall that
\begin{align*}
\norm{h_{t,\lambda}( \pi)-\Ztruth}_1 \leq 2wnk\exp(-\bar n_\textmin I/2) + 2w L_1^\textsum + 2w L_2^\textsum.
\end{align*}
Using the same argument as in Section \ref{sec:proof_mf_iterative}, we conclude with
\begin{align*}
\norm{h_{t,\lambda}(\pi)-\Ztruth}_1\leq n\exp(-\bar n_\textmin I/25) + \frac{1}{2\sqrt{nI/[wk[n/\bar n_\textmin]^2]}}\norm{\pi-\Ztruth}_1,
\end{align*}
with probability at least $1-\exp(-\bar n_\textmin I/10)-n^{-r}$.

\subsection{Proof of Theorem \ref{thm:global}}
Define $t^*=\frac{1}{2}\log\frac{p^*(1-q^*)}{q^*(1-p^*)}$ and $\lambda^* =\frac{1}{2t^*}\log\frac{1-q^*}{1-p^*}$. By the same simplification we derive in Theorem \ref{thm:MF_simplifed}, we have
\begin{align*}
\hat\pi^\mf = \argmax_{\pi\in\Pi_1} f'(\pi;A),
\end{align*}
where
\begin{align*}
f'(\pi;A)=\langle A+\lambda^* I_n-\lambda^* 1_n1_n^T,\pi\pi^T\rangle -\frac{1}{t^*}\sum_{i=1}^n \kl(\tcate(\pi_{i,\cdot})\|\tcate(\pipr_{i,\cdot})).
\end{align*}
Recall the definition of $h_{t,\lambda}(\cdot)$ as in Equation (\ref{eqn:h}). A key observation is that $\hat\pi^\mf =h_{t^*,\lambda^*}(\hat\pi^\mf)$, otherwise if there exists some $i\in[n]$ such that $[h_{t^*,\lambda^*}(\hat\pi^\mf)]_{i,\cdot}$ not equal to $\hat\pi^\mf_{i,\cdot}$. This indicates the implementation of CAVI update on the $i$-th row of $\pi$ will make change, leading to the decrease of $f'(\cdot;A)$. This contradicts with the fact that $\hat\pi^\mf$ is the global minimizer.

The fixed-point property of $\hat\pi^\mf$ is the key to our analysis. It involves three steps.
\begin{itemize}
\item \emph{Step One.} For any $\pi$ such that $\ell(\pi,\Ztruth)=o(\bar n_\textmin)$, by the same analysis as in the proof of Theorem \ref{thm:mf_iterative}, we are able to show that there exist constant $r>0$ and sequence $\eta=o(1)$ such that
\begin{align*}
\norm{h_{t^*,\lambda^*}(\pi)-\Ztruth}_1 \leq n\exp(-(1-\eta)\bar n_\textmin I) + \frac{\norm{\pi-\Ztruth}_1}{\sqrt{nI/[wk[n/\bar n_\textmin]^2]}},
\end{align*}
with probability at least $1-\exp[-(\bar n_\textmin I)^\frac{1}{2}] -n ^{-r}$.

\item \emph{Step Two.} Lemma \ref{lem:mf_loose} presents some loose upper bound for $\ell(\hat\pi^\mf,\Ztruth)$. That is, under the assumption $\rho nI/[wk^2[n/\bar n_\textmin]^2]\rightarrow\infty$, with probability at least $1-e^35^{-n}$, we have
\begin{align*}
\ell(\hat\pi^\mf,\Ztruth) \leq o(\bar n_\textmin).
\end{align*}

\item \emph{Step Three.} Using the property that $h_{t^*,\lambda^*}(\hat\pi^\mf)=\hat\pi^\mf$, we have
\begin{align*}
\norm{\hat\pi^\mf-\Ztruth}_1 \leq n\exp(-(1-\eta)\bar n_\textmin I) + \frac{\norm{\hat\pi^\mf-\Ztruth}_1}{\sqrt{nI/[wk[n/\bar n_\textmin]^2]}}
\end{align*}
holds with probability at least $1-\exp[-(\bar n_\textmin I)^\frac{1}{2}] -n ^{-r}$. Then we obtain the desired result by simple algebra.

\end{itemize}

\subsection{Proof of Theorem \ref{thm:gibbs}}
By law of total expectation, we have
\begin{align}\label{eqn:gibbs_proof_1}
\E_{Z^{(s+1)}}\Big[\norm{Z^{(s+1)}-\Ztruth}_1\Big|A,Z^{(0)}\Big]&= \E_{\pi^{(s+1)}}\bigg[\E_{Z^{(s+1)}}\Big[\norm{Z^{(s+1)}-\Ztruth}_1\Big|\pi^{(s+1)},A,Z^{(0)}\Big]\bigg|A,Z^{(0)}\bigg]\\
& = \E_{\pi^{(s+1)}}\Big[\norm{\pi^{(s+1)}-\Ztruth}_1\Big|A,Z^{(0)}\Big],\nonumber
\end{align}
where the first equation is due to that the conditional expectation of $Z^{(s+1)}$ is $\pi^{(s+1)}$. We are going to build the connection between $\pi^{(s)}$ and $\pi^{(s+1)}$. In Algorithm \ref{alg:gibbs}, there are intermediate steps between $\pi^{(s)}$ and $\pi^{(s+1)}$ as follows:
\begin{align*}
\pi^{(s)} \leadsto Z^{(s)} \leadsto (p^{(s+1)},q^{(s+1)})\rightarrow (t^{(s+1)},\lambda^{(s+1)})\rightarrow\pi^{(s+1)},
\end{align*}
where we use the plain right arrow ($\rightarrow$) to indicate deterministic generation and the curved right arrow ($\leadsto$) to indicate random generation. Despite a slight abuse of notation, we define $\pi^{(0)}=Z^{(0)}$.

Analogous to the proof of Theorem \ref{thm:mf_iterative} in Section \ref{sec:proof_mf_iterative}, we assume $\ell(Z^{(0)},Z^*)=o(\bar n_\textmin)$. The proof for the case $\ell(Z^{(0)},Z^*)$ in the same order of $\bar n_\textmin$ is similar and thus is omitted.

Let $\gamma=o(1)$ be any sequence goes to 0 when $n$ grows. We define a series of events as follows:
\begin{itemize}
\item global event $\mathcal{F}$: We define $\mathcal{F}$ exactly the same way as we define in the proof of Theorem \ref{thm:mf_iterative} in Section \ref{sec:proof_mf_iterative} with respect to sequences $\gamma$ and $\eta'$, and we have $\p(\mathcal{F})\geq 1-\exp[-(\bar n_\textmin I)^\frac{1}{2})] -n ^{-r}$ for some constant $r>0$. We have $\eta'=o(1)$ whose value will be determined later.

\item global event $\mathcal{G}$: Consider any $Z\in\Pi_1$ such that $\norm{Z-\Ztruth}_1\leq \gamma\bar n_\textmin$. Define 
\begin{align*}
 &\alpha_p = \alphapr_p + \sum_{a=1}^k\sum_{i<j}A_{i,j}Z_{i,a}Z_{j,a},\beta_p=\betapr_p+\sum_{a=1}^k\sum_{i<j}(1-A_{i,j})Z_{i,a}Z_{j,a},\\
  &\alpha_q = \alphapr_q + \sum_{a\neq b}\sum_{i<j}A_{i,j}Z_{i,a}Z_{j,b},\beta_q=\betapr_q+\sum_{a\neq b}\sum_{i<j}(1-A_{i,j})Z_{i,a}Z_{j,b}.
\end{align*}
Define $\mathcal{G}$ be the event that
\begin{align*}
\max\left\{\left|\frac{\alpha_p}{\alpha_p+\beta_p}-p^*\right|, \left|\frac{\alpha_q}{\alpha_q+\beta_q}-q^*\right|\right\}\leq \eta'' (p^*-q^*)
\end{align*}
holds uniformly over all the eligible $Z$ for some sequence $\eta''=o(1)$. Then by the same analysis as in Lemma \ref{lem:concentration_t_lambda}, we have $\p(\mathcal{G})\geq 1-e^35^{-n}$.

\item local events $\{\mathcal{H}_1^{(s)}\}_{s=1}^S$: We define $\mathcal{H}_1^{(s)} = \{\norm{\pi^{(s)}-\Ztruth}_1\geq \gamma \bar n_\textmin/2\}$.

\item local events $\{\mathcal{H}_2^{(s)}\}_{s=1}^S$: We define $\mathcal{H}_2^{(s)} = \{\norm{Z^{(s)} - \Ztruth}_1 \geq \gamma\bar n_\textmin\}$.  For the conditional probability, we have
\begin{align*}
&\p(\mathcal{H}_2^{(s)}=1| \mathcal{H}_1^{(s)}=0)\\
&\leq \p\left[\left|\sum_{i=1}^n\left[\norm{Z_{i,\cdot}^{(s)} - \Ztruth_{i,\cdot}}_1-\norm{\pi_{i,\cdot}^{(s)} - \Ztruth_{i,\cdot}}_1\right]\right|\geq \gamma \bar n_\textmin - \norm{\pi^{(s)} - \Ztruth}_1\Bigg|\mathcal{H}_1^{(s)}=0\right]\\
&\leq \p\left[\left|\sum_{i=1}^n\left[\norm{Z_{i,\cdot}^{(s)} - \Ztruth_{i,\cdot}}_1-\norm{\pi_{i,\cdot}^{(s)} - \Ztruth_{i,\cdot}}_1\right]\right|\geq \gamma \bar n_\textmin/2\Bigg|\mathcal{H}_1^{(s)}=0\right]
\end{align*}
Since $\norm{\pi^{(s)}-\Ztruth}_1\leq \gamma \bar n_\textmin/2$  given $\mathcal{H}_1^{(s)}=0$ by Bernstein inequality,  we have
\begin{align*}
\p(\mathcal{H}_2^{(s)}=1| \mathcal{H}_1^{(s)}=0)&\leq \exp\left[-\frac{(\gamma \bar n_\textmin)^2/8}{\norm{\pi^{(s)} - \Ztruth}_1+\gamma \bar n_\textmin/6}\right]\\
&\leq \exp\left[-3(\gamma \bar n_\textmin)^2/16\right].
\end{align*}

\item local events $\{\mathcal{H}_3^{(s)}\}_{s=1}^S$: We define $\mathcal{H}_3^{(s)} = \{|t^{(s)}-t^*|\geq \eta'(p^*-q^*)/p^*,\text{ or }|\lambda^{(s)}-\lambda^*|\geq \eta' (p^*-q^*)\}$. If the global event $\mathcal{G}$ holds and the local event $\mathcal{H}_2^{(s)}$ does not hold, we have
\begin{align*}
\max\left\{\left|\frac{\alpha^{(s+1)}_p}{\alpha_p^{(s+1)}+\beta_p^{(s+1)}}-p^*\right|, \left|\frac{\alpha_q^{(s+1)}}{\alpha_q^{(s+1)}+\beta_q^{(s+1)}}-q^*\right|\right\}\leq \eta'' (p^*-q^*).
\end{align*}
Note that $\alpha_p^{(s+1)}+\beta_p^{(s+1)} = \alphapr_p+\betapr_p+\sum_{a=1}^k\sum_{i<j} Z_{i,a}^{(s)}Z_{j,a}^{(s)} \geq n^2/k$. Using the tail bound of Beta distribution (Lemma \ref{lem:concentration_beta}) we are able to show
\begin{align*}
&\p\left[\left|p^{(s+1)} - \frac{\alpha^{(s+1)}_p}{\alpha_p^{(s+1)}+\beta_p^{(s+1)}}\right|\geq \eta''(p^*-q^*)\Bigg|\mathcal{H}_2^{(s)}=0,\mathcal{G}=1\right]\\
&\leq \exp\left[-\eta''^{2}n^2\frac{(p^*-q^*)^2}{2p^*}\right]\\
&\leq \exp\left[-\eta''^{2}n^2I/2\right],
\end{align*}
where the last inequality is due to Proposition \ref{prop:I}. This leads to
\begin{align*}
\p\left[\left|p^{(s+1)} - p^*\right|\geq 2\eta''(p^*-q^*)\Big|\mathcal{H}_2^{(s)}=0,\mathcal{G}=1\right]\leq \exp\left[-\eta''^{2}n^2I/2\right].
\end{align*}
And similar result holds for $q^{(s+1)}$. Then by the same analysis as in the proof of Lemma \ref{lem:concentration_t_lambda}, $\max\{|p^{(s+1)}-p^*|,|q^{(s+1)}-q^*|\}\leq 2\eta''(p^*-q^*)$ leads to
\begin{align*}
\max\left\{\frac{|t^{(s+1)}-t^*|}{(p^*-q^*)/p^*},\frac{|\lambda^{(s+1)-\lambda^*}|}{p^*-q^*}\right\}\leq 16c_0\eta''.
\end{align*}
By taking $\eta'=16c_0\eta''$, we obtain
\begin{align*}
\p(\mathcal{H}_3^{(s+1)}=1|\mathcal{H}_2^{(s)}=0,\mathcal{G}=1)\leq 2\exp\left[-\eta''^{2}n^2I/2\right].
\end{align*}
\end{itemize}

Note that events $\mathcal{F}$ and $\mathcal{G}$ are about the adjacency matrix $A$. The events $\mathcal{H}_1^{(s)}, \mathcal{H}_2^{(s)}$ and $\mathcal{H}_3^{(s+1)}$ are for $\pi^{(x)},Z^{(s)}$ and $(p^{(s+1)}, q^{(s+1)})$ respectively. With all the above events defined, we can continue our analysis for Equation (\ref{eqn:gibbs_proof_1}). Under the event $\mathcal{F}\cap \mathcal{G} \cap (\mathcal{H}_1^{(s)}\cup \mathcal{H}_2^{(s)}\cup \mathcal{H}_3^{(s+1)})^C$ we have
\begin{align}\label{eqn:gibbs_pi_z}
\norm{\pi^{(s+1)}-\Ztruth}_1 &\leq n\exp(-(1-\eta)\bar n_\textmin I) + c_n\norm{\pi^{(s)}-\Ztruth}_1,
\end{align}
where $c_n= [nI/[wk[n/\bar n_\textmin]^2]]^{-1/2}$. As a consequence, under the event $\mathcal{F}\cap \mathcal{G} \cap (\prod_{v=0}^s\mathcal{H}_1^{(v)}\cup \mathcal{H}_2^{(v)}\cup \mathcal{H}_3^{(v+1)})^C$, we have
\begin{align*}
\norm{\pi^{(s+1)}-\Ztruth}_1 \leq n\exp(-(1-2\eta)\bar n_\textmin I) + c_n^s\norm{\pi^{(0)}-\Ztruth}_1.
\end{align*}
Therefore, we have
\begin{align}\label{eqn:gibbs_proof_2}
&\E_{\pi^{(s+1)}}\left[\norm{\pi^{(s+1)}-\Ztruth}_1\Big|\mathcal{H}_1^{(0)}=0,\mathcal{F}=1,\mathcal{G}=1\right] \leq n\exp(-(1-2\eta)\bar n_\textmin I) \\
&+ c_n^s\norm{\pi^{(0)}-\Ztruth}_1 + n\p\left[\prod_{v=1}^s\mathcal{H}_1^{(v)}\cup \mathcal{H}_2^{(v)}\cup \mathcal{H}_3^{(v+1)}\Big|\mathcal{H}_1^{(0)}=0,\mathcal{F}=1,\mathcal{G}=1\right].\nonumber
\end{align}
Due to the small value of $c_n$, if $\norm{\pi^{(s)}-\Ztruth}_1 \leq \gamma \bar n_\textmin$, Equation (\ref{eqn:gibbs_pi_z}) immediately implies $\norm{\pi^{(s+1)}-\Ztruth}_1 \leq \gamma\bar n_\textmin$. This implies that under the event $\mathcal{F}\cup \mathcal{G}$ we have
\begin{align*}
\mathcal{H}_1^{(s+1)} \subset \mathcal{H}_1^{(s)} \cup \mathcal{H}_2^{(s)} \cup \mathcal{H}_3^{(s+1)},\forall s\geq 0,
\end{align*} 
and consequently,
\begin{align*}
\prod_{v=0}^s\mathcal{H}_1^{(v)}\cup \mathcal{H}_2^{(v)}\cup \mathcal{H}_3^{(v+1)} \subset \mathcal{H}_1^{(0)} \prod_{v=0}^s\mathcal{H}_2^{(v)}\cup \mathcal{H}_3^{(v+1)},\forall s\geq 1.
\end{align*}
Thus,
\begin{align}\label{eqn:events_summation}
&\p\left[\prod_{v=0}^s\mathcal{H}_1^{(v)}\cup \mathcal{H}_2^{(v)}\cup \mathcal{H}_3^{(v+1)}\Big|\mathcal{H}_1^{(0)}=0,\mathcal{F}=1,\mathcal{G}=1\right] \\
&\leq \p\left[\prod_{v=0}^s\mathcal{H}_2^{(v)}\cup \mathcal{H}_3^{(v+1)}\Big|\mathcal{H}_1^{(0)}=0,\mathcal{F}=1,\mathcal{G}=1\right]\nonumber\\
&\leq \sum_{v=0}^s\p(\mathcal{H}_2^{(v)}=1| \mathcal{H}_1^{(v)}=0) + \sum_{v=0}^n \p(\mathcal{H}_3^{(v+1)}=1|\mathcal{H}_2^{(v)}=0,\mathcal{G}=1)\nonumber\\
&\leq  (s+1)\left[\exp\left[-3(\gamma \bar n_\textmin)^2/16\right]+ 2\exp\left[-\eta''^{2}n^2I/2\right]\right].\nonumber
\end{align}
Note that $\p(\mathcal{H}_1^{(0)}=0,\mathcal{F}=1,\mathcal{G}=1)\geq 1-\exp[-(\bar n_\textmin I)^\frac{1}{2})] -n ^{-r} -e^35^{-n} -\epsilon$. Recall we define $\pi^{(0)}=Z^{(0)}$. By Equations (\ref{eqn:gibbs_proof_1}), (\ref{eqn:gibbs_proof_2}) and (\ref{eqn:events_summation}), we have
\begin{align*}
\E_{Z^{(s+1)}}\Big[\norm{Z^{(s+1)}-\Ztruth}_1\Big|A,Z^{(0)}\Big] &\leq n\exp(-(1-2\eta)\bar n_\textmin I) + c_n^s\norm{Z^{(0)}-\Ztruth}_1 + (s+1)nb_n,
\end{align*}
with probability at least $1-\exp[-(\bar n_\textmin I)^\frac{1}{2})] -n ^{-r} -e^35^{-n} -\epsilon$, where $b_n=\exp\left[-3(\gamma \bar n_\textmin)^2/16\right]+ 2\exp\left[-\eta''^{2}n^2I/2\right]$.

\subsection{Proof of Theorem \ref{thm:mle}}\label{sec:proof_mle}
Note the similarity between Algorithm \ref{alg:mle} and Algorithm \ref{alg:BCAVI}. We can prove Theorem \ref{thm:mle} with almost the identical argument used in the proof of Theorem \ref{thm:mf_iterative}, thus omitted.

\section{Statements and Proofs of Auxiliary Lemmas and Propositions}

We include all the auxiliary propositions and lemmas in this section.

\subsection{Statements and Proofs of Lemmas and Propositions for Theorem \ref{thm:mf_iterative}}\label{sec:Auxiliary}

\begin{lemma}\label{lem:concentration_t_lambda}
Let $c_\tinit$ be some sufficiently small constant. Consider any $\pi\in\Pi_1$ such that $\norm{\pi-\Ztruth}_1\leq c_\tinit n/k$. Let $\alpha_p,\beta_p,\alpha_q,\beta_q,t,\lambda$ be the outputs after one step CAVI iteration from $\pi$ described in Algorithm \ref{alg:BCAVI}. That is, they are defined as Equations (\ref{eqn:alpha_p_prime}) - (\ref{eqn:lambda_prime}). Define
\begin{align*}
\hat p =\frac{\sum_{i<j}\sum_{a=1}^k \pi_{i,a}\pi_{j,a}A_{i,j}}{\sum_{i<j}\sum_{a=1}^k \pi_{i,a}\pi_{j,a}},\text{ and }\hat q =\frac{\sum_{i<j}\sum_{a\neq b}\pi_{i,a}\pi_{j,b}A_{i,j}}{\sum_{i<j}\sum_{a\neq b}\pi_{i,a}\pi_{j,b}}.
\end{align*}
Under the same assumption as in Theorem \ref{thm:mf_iterative}, there exists some sequence $\epsilon=o(1)$ such that with probability at least $1-e^35^{-n}$, the following inequality holds
\begin{align*}
\max \left\{\frac{|\hat p-p^*|}{p^*-q^*},\frac{|\hat q-q^*|}{p^*-q^*},\frac{|t-t^*|}{(p^*-q^*)/p^*},\frac{|\lambda-\lambda^*|}{p^*-q^*}\right\}\leq \epsilon + 24c_0\frac{\norm{\pi-\Ztruth}_1}{n/k},
\end{align*}
 uniformly over all the eligible $\pi$. In addition if we further assume $c_\tinit $ goes to 0, the LHS of the above inequality will be simply upper bounded by $\epsilon$.
\end{lemma}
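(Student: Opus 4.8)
The plan is to reduce the statement to a concentration estimate for $\hat p$ and $\hat q$ and then propagate it through the closed forms for $t,\lambda$. First I note that with $W_{i,j}:=\sum_a\pi_{i,a}\pi_{j,a}$ the updates (\ref{eqn:alpha_p_prime})--(\ref{eqn:alpha_q_prime}) give $\alpha_p/(\alpha_p+\beta_p)=\hat p+O(\alphapr_p+\betapr_p)/\sum_{i<j}W_{i,j}$ and similarly for $q$; since $\sum_{i<j}W_{i,j}\asymp n^2/k$ and, by (\ref{eqn:assumption_nI}), the prior masses are $o((p^*-q^*)n^2/k)$, these ratios coincide with $\hat p,\hat q$ up to $o(p^*-q^*)$. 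Since all four parameters are $\gtrsim np^*\to\infty$ and $\psi(x)\in(\log(x-\tfrac12),\log x)$, replacing $\psi$ by $\log$ in (\ref{eqn:t_prime})--(\ref{eqn:lambda_prime}) again costs $o(p^*-q^*)$. What then remains is to control $|\hat p-p^*|$, $|\hat q-q^*|$ and to feed these into a mean-value estimate for $t^*=\tfrac12\log\tfrac{p^*(1-q^*)}{q^*(1-p^*)}$ and $\lambda^*=\tfrac1{2t^*}\log\tfrac{1-q^*}{1-p^*}$: the bound $q^*\ge c_0p^*$ enters precisely here, through derivatives of size $\asymp 1/q^*\le 1/(c_0 p^*)$ and through $t^*\asymp(p^*-q^*)/p^*$ (Proposition \ref{prop:lambda}), and the normalisations $p^*-q^*$, $(p^*-q^*)/p^*$ in the statement are exactly the ones making this propagation scale-free.

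For the concentration of $\hat p=\frac{\sum_{i<j}W_{i,j}A_{i,j}}{\sum_{i<j}W_{i,j}}$ (the case of $\hat q$ being symmetric, with within- and between-community pairs interchanged) I would split $\hat p-p^*=\bigl(\E[\hat p\mid\pi]-p^*\bigr)+\bigl(\hat p-\E[\hat p\mid\pi]\bigr)$. Writing $W^*_{i,j}:=\sum_aZ^*_{i,a}Z^*_{j,a}=\mathbb I\{z_i=z_j\}$, the bias equals $(q^*-p^*)\,\bigl(\sum_{i<j:\,z_i\ne z_j}W_{i,j}\bigr)\big/\bigl(\sum_{i<j}W_{i,j}\bigr)$; since $\sum_{i<j:\,z_i\ne z_j}W_{i,j}\le\sum_{i<j}|W_{i,j}-W^*_{i,j}|\le(n-1)\norm{\pi-\Ztruth}_1$ (using $|W_{i,j}-W^*_{i,j}|\le\norm{\pi_{i,\cdot}-\Ztruth_{i,\cdot}}_1+\norm{\pi_{j,\cdot}-\Ztruth_{j,\cdot}}_1$) while $\sum_{i<j}W_{i,j}\gtrsim n^2/k$, the bias is deterministically $\lesssim(p^*-q^*)\,\norm{\pi-\Ztruth}_1/(n/k)$. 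For the fluctuation I split $W=W^*+(W-W^*)$: the fixed-weight part $\sum_{i<j:\,z_i=z_j}(A_{i,j}-p^*)\big/\sum_{i<j}W_{i,j}$ is a single sum of $\asymp n^2/k$ independent centred bounded variables, and Bennett's inequality bounds it by $\varepsilon(p^*-q^*)$ with probability at least $1-e^35^{-n}$ using $nI/k\to\infty$; the cross term $\langle\pi\pi^T-\Ztruth\Ztrutht,\,A-\E A\rangle\big/\sum_{i<j}W_{i,j}$ I would control by writing $\pi\pi^T-\Ztruth\Ztrutht=(\pi-\Ztruth)\pi^T+\Ztruth(\pi-\Ztruth)^T$ (rank $\le 2k$) and using Cauchy--Schwarz to get $|\langle\cdot,\cdot\rangle|\le 2\opnorm{A-\E A}\sqrt{n\norm{\pi-\Ztruth}_1}$, together with $\opnorm{A-\E A}\lesssim\sqrt{np^*}$ (Lemma \ref{lem:A_opnorm}, or Lemma \ref{lem:A_trunc_opnorm} after removing the vertices of degree $\ge 20np^*$ in the sparse case, exactly as in \emph{Part One} of the proof of Theorem \ref{thm:mf_iterative}); a peeling over dyadic values of $\norm{\pi-\Ztruth}_1$ then shows this term is $\le\varepsilon(p^*-q^*)$ when $\norm{\pi-\Ztruth}_1\lesssim 1/I$ and $\le C(p^*-q^*)\norm{\pi-\Ztruth}_1/(n/k)$ otherwise.

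Collecting these bounds gives $\tfrac{|\hat p-p^*|}{p^*-q^*},\tfrac{|\hat q-q^*|}{p^*-q^*}\le\varepsilon+C\,\tfrac{\norm{\pi-\Ztruth}_1}{n/k}$ uniformly over the eligible $\pi$ with the stated probability; the mean-value estimate of the first paragraph then yields the same shape for $\tfrac{|t-t^*|}{(p^*-q^*)/p^*}$ and $\tfrac{|\lambda-\lambda^*|}{p^*-q^*}$, the constant $C$ being absorbed into $24c_0$ after the $c_0$-bookkeeping. When moreover $c_\tinit\to 0$ one has $\norm{\pi-\Ztruth}_1/(n/k)\le c_\tinit=o(1)$, so this term folds into $\varepsilon$ and only $\varepsilon$ survives. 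The main obstacle is the uniformity over the uncountable ball $\{\pi:\norm{\pi-\Ztruth}_1\le c_\tinit n/k\}$ in the cross term: the crude bound $\sum_{i<j}|W_{i,j}-W^*_{i,j}|A_{i,j}$ throws away the centring of $A-\E A$ and only produces a term of order $p^*\norm{\pi-\Ztruth}_1/(n/k)$, which is not small relative to $p^*-q^*$ when $p^*-q^*\ll p^*$, so one must genuinely exploit both the mean-zero structure of $A-\E A$ and the rank-$2k$ structure of $\pi\pi^T-\Ztruth\Ztrutht$, and arrange the peeling so that the deviation is either proportional to $\norm{\pi-\Ztruth}_1$ or absorbed into $\varepsilon(p^*-q^*)$.
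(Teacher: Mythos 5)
Your reduction (priors negligible under Equation (\ref{eqn:assumption_nI}), digamma replaced by $\log$, then a mean-value propagation from $(\hat p,\hat q)$ to $(t,\lambda)$ using $q^*\geq c_0p^*$ and $t^*\asymp(p^*-q^*)/p^*$) is exactly the paper's second half, and your bias bound $\lesssim(p^*-q^*)\norm{\pi-\Ztruth}_1/(n/k)$ matches the paper's. Where you genuinely diverge is the fluctuation term: the paper does not decompose around $\Ztruth$ at all, but invokes Lemma \ref{lem:A_minus_EA_Grothendiecks} (Grothendieck/Gu\'edon--Vershynin) to get $\sup_{\pi\in\Pi_1}|\langle A-\E A,\pi\pi^T\rangle|\leq 6n\sqrt{np^*}$ in one stroke, which is uniform over all of $\Pi_1$ and contributes only to the $\epsilon$ term, so no sparse/dense split, no degree truncation, and no case analysis in $\norm{\pi-\Ztruth}_1$ are needed. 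Your route (scalar Bernstein at $\Ztruth$ plus the cross term $\langle(\pi-\Ztruth)\pi^T+\Ztruth(\pi-\Ztruth)^T,A-\E A\rangle$ bounded by $2\opnorm{A-\E A}\sqrt{n\norm{\pi-\Ztruth}_1}$ via Frobenius--Cauchy--Schwarz) is sound, and the resulting bound $k\sqrt{p^*\norm{\pi-\Ztruth}_1}/n$ does interpolate correctly between $\epsilon(p^*-q^*)$ and $C(p^*-q^*)\norm{\pi-\Ztruth}_1/(n/k)$ at the threshold $\norm{\pi-\Ztruth}_1\asymp 1/I$; note the dyadic peeling you invoke is superfluous, since once the operator-norm event holds your inequality is deterministic and already uniform in $\pi$.

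The one real shortfall is the probability claim. The lemma asserts failure probability $e^35^{-n}$, and this exponential rate is precisely what the Grothendieck route delivers (a union bound over $\pm1$ sign vectors); your route hinges on $\opnorm{A-\E A}\lesssim\sqrt{np^*}$, which via Lemma \ref{lem:A_opnorm} (dense) or Lemma \ref{lem:A_trunc_opnorm} plus Lemma \ref{lem:sum_zi} (sparse, after truncation) only holds with probability $1-n^{-r}$ (resp.\ $1-n^{-1}-e^{-5np^*}$), and no spectral-norm bound at the $\sqrt{np^*}$ scale can hold with probability $1-5^{-n}$ in the sparse regime. So as written you prove a weaker version of the lemma, with polynomial rather than exponential failure probability; this still suffices for Theorem \ref{thm:mf_iterative}, whose probability statement already carries an $n^{-c}$ term, but if you want the lemma exactly as stated you should swap the cross-term argument for the uniform Grothendieck bound (or otherwise avoid the operator norm).
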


\begin{proof}
We are going to obtain tight bounds on $|\hat p-p^*|$ and $|\hat q -q^*|$ first. Note that we have the ``variance-bias'' decomposition as in
\begin{align*}
|\hat p-p^*|\leq \frac{|\sum_{i<j}\sum_{a=1}^k \pi_{i,a}\pi_{j,a}(A_{i,j}-\E A_{i,j})|}{\sum_{i<j}\sum_{a=1}^k \pi_{i,a}\pi_{j,a}}+\left|\frac{\sum_{i<j}\sum_{a=1}^k \pi_{i,a}\pi_{j,a}\E A_{i,j}}{\sum_{i<j}\sum_{a=1}^k \pi_{i,a}\pi_{j,a}}-p^*\right|.
\end{align*}
We have concentration inequality holds for the numerator in the first term by Lemma \ref{lem:A_minus_EA_Grothendiecks}. That is, with probability at least $1-e^35^{-n}$, we have
\begin{align*}
\left|\sum_{i<j}\sum_{a=1}^k \pi_{i,a}\pi_{j,a}(A_{i,j}-\E A_{i,j})\right|=\left|\frac{1}{2}\langle A-\E A,\pi\pi^T\rangle\right|\leq 3n\sqrt{np^*}
\end{align*}
holds uniformly over all $\pi\in\Pi_1$. For the denominator, we have
\begin{align*}
\frac{n^2}{2}\geq \sum_{i<j}\sum_{a=1}^k \pi_{i,a}\pi_{j,a}=\frac{1}{2}\sum_{a=1}^k\norm{\pi_{\cdot,a}}_1^2\geq  \frac{n^2}{2k},
\end{align*}
since $\sum_{a=1}^k\norm{\pi_{\cdot,a}}_1=n$. Thus, we are able to obtain an upper bound on the first term as
\begin{align*}
\frac{|\sum_{i<j}\sum_{a=1}^k \pi_{i,a}\pi_{j,a}(A_{i,j}-\E A_{i,j})|}{\sum_{i<j}\sum_{a=1}^k \pi_{i,a}\pi_{j,a}}\leq 6\sqrt{\frac{k^2p^*}{n}}.
\end{align*}
For the second term, since $\E A_{i,j}=p^*\sum_{a=1}^k\Ztruth_{i,a}\Ztruth_{j,a} + q^*(1-\sum_{a=1}^k\Ztruth_{i,a}\Ztruth_{j,a})$, we have
\begin{align*}
\left|\frac{\sum_{i<j}\sum_{a=1}^k \pi_{i,a}\pi_{j,a}\E A_{i,j}}{\sum_{i<j}\sum_{a=1}^k \pi_{i,a}\pi_{j,a}}-p^*\right|&=(p^*-q^*)\frac{\left|\sum_{i<j}\left[\sum_{a=1}^k \pi_{i,a}\pi_{j,a}\right]\left[\sum_{a=1}^k 1-\Ztruth_{i,a}\Ztruth_{j,a}\right]\right|}{\sum_{i<j}\sum_{a=1}^k \pi_{i,a}\pi_{j,a}}\\
&=(p^*-q^*)\frac{\left|\langle \pi\pi^T,11^T-\Ztruth\Ztrutht \rangle\right|}{\sum_{i<j}\sum_{a=1}^k \pi_{i,a}\pi_{j,a}}\\
&=(p^*-q^*)\frac{\left|\langle \pi\pi^T-\Ztruth\Ztrutht ,11^T-\Ztruth\Ztrutht \rangle\right|}{\sum_{i<j}\sum_{a=1}^k \pi_{i,a}\pi_{j,a}},
\end{align*}
where in the last inequality we use the orthogonality between $\Ztruth\Ztrutht $ and $11^T-\Ztruth\Ztrutht $.
For its numerator, we have
\begin{align*}
\left|\langle \pi\pi^T-\Ztruth\Ztrutht ,11^T-\Ztruth\Ztrutht \rangle\right|&\leq \norm{\pi\pi^T-\Ztruth\Ztrutht }_1\\
&\leq \norm{\pi-\Ztruth}_1(\norm{\pi}_1+\norm{\Ztruth}_1)\\
&\leq \norm{\pi-\Ztruth}_1(2\norm{\Ztruth}_1+\norm{\pi-\Ztruth}_1)\\
&\leq 3n\norm{\pi-\Ztruth}_1.
\end{align*}
This leads to
\begin{align*}
\left|\frac{\sum_{i<j}\sum_{a=1}^k \pi_{i,a}\pi_{j,a}\E A_{i,j}}{\sum_{i<j}\sum_{a=1}^k \pi_{i,a}\pi_{j,a}}-p^*\right|\leq \frac{3n\norm{\pi-\Ztruth}_1(p^*-q^*)}{n^2/k}\leq 3kn^{-1}(p^*-q^*)\norm{\pi-\Ztruth}_1.
\end{align*}
Thus,
\begin{align*}
|\hat p-p^*|&\leq 6\sqrt{\frac{k^2p^*}{n}} +3kn^{-1}(p^*-q^*)\norm{\pi-\Ztruth}_1 \leq\left[\sqrt{\frac{k^2p^*}{n(p^*-q^*)^2}}+\frac{3\norm{\pi-\Ztruth}_1}{n/k}\right](p^*-q^*).
\end{align*}
Similar result holds for $|\hat q-q^*|$. Denote $\eta_0 = \sqrt{\frac{k^2p^*}{n(p^*-q^*)^2}}+\frac{3\norm{\pi-\Ztruth}_1}{n/k}$, thus
\begin{align*}
\max\{|\hat p-p^*|,|\hat q-q^*|\} \leq \eta_0(p^*-q^*).
\end{align*}
By the assumption of $nI$ in Equation (\ref{eqn:assumption_nI}) and Proposition \ref{prop:I}, we have $n(p^*-q^*)^2/(k^2p^*)\asymp nI/k^2\rightarrow\infty$. Therefore, the first term in $\eta_0$ goes to 0. The second term in $\eta_0$ is at most $3c_\tinit$ which implies $\eta_0\leq 4c_\tinit$.

 By the fact that the digamma function satisfies $\psi(x)\in(\log(x-1/2),\log x),\forall x\geq 1/2$, we have
\begin{align*}
\psi(\alpha_p)-\psi(\beta_p)&\geq \log \frac{\alpha_p-1/2}{\beta_p}\\
&=\log\left[\frac{\left[\alphapr_p-1/2+\sum_{i<j}\sum_{a=1}^k \pi_{i,a}\pi_{j,a}A_{i,j}\right]\big/\left[\sum_{i<j}\sum_{a=1}^k \pi_{i,a}\pi_{j,a}\right]}{1+\left[\betapr_p-\sum_{i<j}\sum_{a=1}^k \pi_{i,a}\pi_{j,a}A_{i,j}\right]\big/\left[\sum_{i<j}\sum_{a=1}^k \pi_{i,a}\pi_{j,a}\right]}\right]\\
&=\log\left[\frac{\hat p+(\alphapr_p-1/2)\big/\left[\sum_{i<j}\sum_{a=1}^k \pi_{i,a}\pi_{j,a}\right]}{1-\hat p +\betapr_p\big/\left[\sum_{i<j}\sum_{a=1}^k \pi_{i,a}\pi_{j,a}\right]}\right].
\end{align*}
Recall that we have shown $\sum_{i<j}\sum_{a=1}^k \pi_{i,a}\pi_{j,a}$ lies in the interval of $(n^2/(2k),n^2/2)$. By Equation (\ref{eqn:assumption_nI}), there exists a sequence $\eta'=o(1)$ such that $\alpha_p,\beta_p\leq \eta'(p^*-q^*)n^2/k$. Then we have
\begin{align*}
\psi(\alpha_p)-\psi(\beta_p) \geq \log\frac{p^* - |p^*-\hat p|- \eta'(p^*-q^*)}{1-p^* + |p^*-\hat p|+ \eta'(p^*-q^*)}.
\end{align*}
Similar analysis leads to
\begin{align*}
\psi(\alpha_q)-\psi(\beta_q) \leq \log\frac{q^* + |q^*-\hat q| + \eta'(p^*-q^*)}{1-q^* - |q^*-\hat q| -\eta'(p^*-q^*)}.
\end{align*}
Together we have
\begin{align*}
t - t^* &\geq \log\left[\frac{p^* - |p^*-\hat p|- \eta'(p^*-q^*)}{1-p^* + |p^*-\hat p|+ \eta'(p^*-q^*)}\frac{1-q^* - |q^*-\hat q| -\eta'(p^*-q^*)}{q^* + |q^*-\hat q| + \eta'(p^*-q^*)}\right] - t^*\\
&\geq \log\left[\left[1-\frac{|p^*-\hat p|+ \eta'(p^*-q^*)}{q^*}\right]^4 \frac{p^*(1-q^*)}{q^*(1-p^*)}\right] -t^*\\
& = 4\log\left[1-(\eta_0+\eta')\frac{p^*-q^*}{q^*}\right].\\
\end{align*}
Recall that we assume $c_0p^*<q^*<p^*$. Thus $(\eta_0+\eta')(p^*-q^*)/p^*\leq 5c_\tinit c_0$. When $c_\tinit$ is sufficiently small, we have $(\eta_0+\eta')(p^*-q^*)/p^* \leq 1/2$. Then using the fact $-x\geq\log(1-x)\geq -2x,\forall x\in(0,1/2)$. 
We have
\begin{align*}
t - t^* \geq -8 (\eta_0+\eta')(p^*-q^*)/q^*.
\end{align*}
Analogously we can obtain the same upper bound on $\hat t - t^*$, and then 
\begin{align*}
|t - t^*| \leq 8c_0(\eta_0+\eta')\frac{p^*-q^*}{p^*}.
\end{align*}

Identical analysis can be applied towards bounds on $|\hat\lambda-\lambda^*|$. Note that
\begin{align*}
\log\frac{\beta_p}{\alpha_p+\beta_p}=\log\left[\frac{1-\hat p +\betapr_p\big/\left[\sum_{i<j}\sum_{a=1}^k \pi_{i,a}\pi_{j,a}\right]}{1 +(\alphapr_p+\betapr_p)\big/\left[\sum_{i<j}\sum_{a=1}^k \pi_{i,a}\pi_{j,a}\right]}\right],
\end{align*}
similarly for $\alpha_q,\beta_q$. Omitting the immediate steps, we end up with
\begin{align*}
|\lambda-\lambda^*| &= |\left[\psi(\beta_q)-\psi(\alpha_q+\beta_q)\right]- \left[\psi(\beta_p)-\psi(\alpha_p+\beta_p)\right] - \lambda^*| \leq 8(\eta_0+\eta')(p^*-q^*).
\end{align*}

The proof is complete after we unify and rephrase all the aforementioned results.
\end{proof}

\begin{lemma}\label{lem:A_minus_EA_Grothendiecks}
Let $A\in [0,1]^{n\times n}$ such that $A=A^T$ and $A_{i,i}=0,\forall i\in[n]$. Assume $\{A_{i,j}\}_{i<j}$ are independent random variable, and there exists $p\leq 1$ such that $9n^{-1}\leq \frac{2}{n(n-1)}\sum_{i<j} \text{Var}(A_{i,j})\leq p$, and then we have
\begin{align*}
\sup_{\pi\in\Pi_1}\Big|\langle A-\E A, \pi\pi^T\rangle\Big|\leq 6n\sqrt{np},
\end{align*}
with probability at least $1-e^35^{-n}$.
\end{lemma}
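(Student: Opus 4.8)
The plan is to reduce the supremum over the relaxed set $\Pi_1$ to a supremum over a finite family of sign vectors via Grothendieck's inequality, and then apply Bernstein's inequality together with a union bound. Write $W = A - \E A$; it is symmetric with zero diagonal (since $A_{i,i}=0$) and has independent upper‑triangular entries in $[-1,1]$. The key structural observation is that each row $\pi_{i,\cdot}$ of a matrix $\pi\in\Pi_1$ is a probability vector, hence $\|\pi_{i,\cdot}\|_2\le\|\pi_{i,\cdot}\|_1=1$, and $(\pi\pi^T)_{i,j}=\langle\pi_{i,\cdot},\pi_{j,\cdot}\rangle$. Thus $\langle W,\pi\pi^T\rangle=\sum_{i,j}W_{i,j}\langle\pi_{i,\cdot},\pi_{j,\cdot}\rangle$ is a bilinear form evaluated at unit vectors of a finite‑dimensional Hilbert space, and Grothendieck's inequality gives $\sup_{\pi\in\Pi_1}|\langle W,\pi\pi^T\rangle|\le K_G\max_{s,t\in\{-1,1\}^n}|s^TWt|$ for the (absolute) Grothendieck constant $K_G$. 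Since the target matrices $\pi\pi^T$ are positive semidefinite one may instead use the symmetric version and reduce to $K_G\max_{s\in\{-1,1\}^n}|s^TWs|$, which shrinks the net from $4^n$ to $2^n$. This step is what gives the lemma its name, and it is what avoids any dependence on $k$: the naive bound $\pi\pi^T=\sum_{a}\pi_{\cdot,a}\pi_{\cdot,a}^T$ combined with a cut‑norm estimate $|\pi_{\cdot,a}^TW\pi_{\cdot,a}|\lesssim n\sqrt{np}$ (obtained from the level‑set decomposition $\pi_{\cdot,a}=\int_0^1 \mathbb{I}\{\pi_{\cdot,a}>u\}\,du$ of each $\pi_{\cdot,a}\in[0,1]^n$) would cost an extra factor $k$.

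Next I would control $s^TWt$ for a fixed pair $(s,t)$. Writing $s^TWt=\sum_{i<j}(s_it_j+s_jt_i)W_{i,j}$ exhibits it as a sum of independent, mean‑zero terms, each bounded by $2$ in absolute value, with total variance $\sum_{i<j}(s_it_j+s_jt_i)^2\,\mathrm{Var}(A_{i,j})\le 4\sum_{i<j}\mathrm{Var}(A_{i,j})\le 2n(n-1)p$ by the average‑variance hypothesis. Bernstein's inequality then yields $\p(|s^TWt|\ge u)\le 2\exp\!\big(-\tfrac{u^2/2}{2n(n-1)p+2u/3}\big)$. Taking $u$ a fixed multiple of $n\sqrt{np}$ and invoking the lower bound $\tfrac{2}{n(n-1)}\sum_{i<j}\mathrm{Var}(A_{i,j})\ge 9n^{-1}$ — which forces $np\ge 9$, so that $u$ is of order at most $n^2 p$ and the linear term $2u/3$ is dominated by the variance term $2n(n-1)p$ — the exponent becomes of order $n$ with a constant one can push as large as needed by enlarging the multiple. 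A union bound over the $\le 4^n$ pairs $(s,t)$ (or $2^n$ vectors in the PSD version) then beats this entropy, and after folding in $K_G$ one lands at $\sup_{\pi\in\Pi_1}|\langle W,\pi\pi^T\rangle|\le 6n\sqrt{np}$ off an event of probability at most $e^3 5^{-n}$.

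The main obstacle is purely quantitative: one must choose the multiple $u/(n\sqrt{np})$ large enough that the Bernstein exponent exceeds $\log 4$ (resp. $\log 2$) per coordinate and still leaves the additional $\log 5$ slack demanded by the target probability $5^{-n}$, while simultaneously keeping $K_G\cdot u\le 6n\sqrt{np}$. Getting all three constraints to close at once is precisely where the hypothesis $\tfrac{2}{n(n-1)}\sum_{i<j}\mathrm{Var}(A_{i,j})\ge 9n^{-1}$ enters: it guarantees that Bernstein operates in its sub‑Gaussian (quadratic) regime rather than its Poissonian (linear) regime, so that the exponent scales like $u^2/(n^2p)\asymp n$ rather than like $u\asymp n\sqrt{np}$. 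The $e^3$ prefactor and the base $5$ are then just the leftover room after optimizing, and the remainder of the argument is routine.
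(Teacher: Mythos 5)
Your plan has the same architecture as the paper's proof: bound $\langle A-\E A,\pi\pi^T\rangle$ by the bilinear form over unit vectors (rows of $\pi$ have $\ell_2$ norm at most $1$), apply Grothendieck's inequality to reduce to sign vectors, and control $\sup_{s,t\in\{-1,1\}^n}|s^T(A-\E A)t|$ by concentration. The paper simply cites Lemma 4.1 of Gu\'edon--Vershynin for the statement that this sign-vector supremum is at most $3n\sqrt{np}$ with probability $1-e^35^{-n}$, and then applies Grothendieck with constant $<2$; the constants $6$ and $e^35^{-n}$ in the lemma are inherited directly from that citation. You instead propose to re-derive the sign-vector bound by Bernstein plus a union bound, and this is where the argument has a genuine gap: the three constraints you yourself list do not close. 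To beat the $4^n$ union bound with the extra margin $5^{-n}$ you need the per-pair exponent to be at least about $n\log 20\approx 3n$. With deviation $u=c\,n\sqrt{np}$, individual terms bounded by $2$, total variance at most $2n^2p$, and $np$ allowed to be as small as $9$ by the hypothesis, Bernstein gives exponent at least $\frac{c^2n}{4+4c/9}$, and the requirement forces $c\gtrsim 4.2$ (even discarding the linear term entirely forces $c\ge 2\sqrt{\log 20}\approx 3.46$, and at $np=9$ the linear term inflates the denominator by roughly $40\%$ at the relevant $c$, so it cannot be discarded). Multiplying by any admissible Grothendieck constant (Krivine's $1.78$, or even the optimal $K_G>1.67$) then yields a final constant around $7$, not $6$; conversely, capping $K_G\,c\le 6$ forces $c\le 3.6$, and the union bound then only delivers a failure probability of order $e^{-0.9n}$, far from $e^35^{-n}$. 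So the assertion that ``one can push the multiple as large as needed'' conflicts with the cap imposed by the target constant $6$, and the claim that $np\ge 9$ places Bernstein safely in its sub-Gaussian regime is quantitatively insufficient in the worst allowed regime. As written, your proof would establish the lemma with a worse constant or a worse probability, but not the stated combination; to recover the paper's constants you need the sharper input that the paper imports from Gu\'edon--Vershynin (or some argument finer than a naive union bound over all $4^n$ sign pairs).

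A secondary point: your parenthetical reduction to $K_G\max_{s\in\{-1,1\}^n}|s^TWs|$ ``since $\pi\pi^T$ is PSD'' is not a valid form of Grothendieck's inequality. The absolute constant $K_G$ is attached to the bipartite form with independent sign vectors $s,t$; for the symmetric problem $\max\{\langle W,X\rangle:X\succeq 0,\ \mathrm{diag}(X)\le 1\}$ versus $\max_s s^TWs$ with a general symmetric zero-diagonal $W$, the correct comparison constant grows like $\log n$ (Charikar--Wirth; Alon et al.), which would destroy the bound. Your primary route through pairs $(s,t)$ is fine and is exactly what the paper uses, so simply drop the $2^n$ shortcut.
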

\begin{proof}
This result is a direct consequence of Grothendieck inequality \cite{grothendieck1996resume} (see also Theorem 3.1 of \cite{guedon2016community} for a rephrased statement) on the matrix $A-\E A$. The Lemma 4.1 of \cite{guedon2016community} proves that with probability at least $1-e^35^{-n}$,
\begin{align*}
\sup_{s,t\in\{-1,1\}^n}\Big|\sum_{i,j}(A_{i,j}-\E A_{i,j})s_it_j\Big|\leq 3n\sqrt{np}.
\end{align*}
Then by  applying Grothendieck inequality we obtain
\begin{align*}
\sup_{\norm{X_i}_2\leq 1,\forall i\in[n]}\Big|\sum_{i,j}(A_{i,j}-\E A_{i,j}) X_i^TX_j\Big|\leq 3cn\sqrt{np},
\end{align*}
where $c$ is a positive constant smaller than 2. This concludes with
\begin{align*}
\sup_{\pi\in\Pi_1}\Big|\langle A-\E A, \pi\pi^T\rangle\Big|\leq 6n\sqrt{np},
\end{align*}
\end{proof}

\begin{proposition}\label{prop:chernoff_I}
Assume $0<q<p<1$. Let $X\sim \Ber(q)$ and $Y\sim \Ber(p)$. Recall the definition $\lambda =\log\frac{1-q}{1-p}/\log\frac{p(1-q)}{q(1-p)}$, $t=\frac{1}{2}\log\frac{p(1-q)}{q(1-p)}$ and $I=-2\log[\sqrt{pq}+\sqrt{(1-p)(1-q)}]$. Then the following two equations hold
\begin{align}\label{eqn:chernoff_I_1}
e^{t\lambda} = \left(\frac{\E e^{tX}}{\E e^{-tY}}\right)^\frac{1}{2},\text{ and }\;\;\E e^{tX}\E e^{-tY} =\exp(-I).
\end{align}
\end{proposition}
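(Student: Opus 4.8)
The statement is a purely algebraic identity between moment generating functions, so the plan is to compute both MGFs explicitly and simplify. First I would record that, since $X\sim\Ber(q)$ and $Y\sim\Ber(p)$, we have $\E e^{tX}=qe^{t}+(1-q)$ and $\E e^{-tY}=pe^{-t}+(1-p)$, and that the definition $t=\tfrac12\log\tfrac{p(1-q)}{q(1-p)}$ gives $e^{2t}=\tfrac{p(1-q)}{q(1-p)}$, i.e.\ $e^{t}=\sqrt{\tfrac{p(1-q)}{q(1-p)}}$.

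The key step is a factorization. Substituting $e^{t}$ into $\E e^{tX}$ and pulling out $\sqrt{(1-q)/(1-p)}$ yields
\begin{align*}
\E e^{tX}=\sqrt{\tfrac{1-q}{1-p}}\left(\sqrt{pq}+\sqrt{(1-p)(1-q)}\right),
\end{align*}
and symmetrically, substituting $e^{-t}=\sqrt{\tfrac{q(1-p)}{p(1-q)}}$ into $\E e^{-tY}$ and pulling out $\sqrt{(1-p)/(1-q)}$ gives $\E e^{-tY}=\sqrt{\tfrac{1-p}{1-q}}\left(\sqrt{pq}+\sqrt{(1-p)(1-q)}\right)$. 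Multiplying these two identities, the prefactors cancel and the bracket is squared, so $\E e^{tX}\,\E e^{-tY}=\left(\sqrt{pq}+\sqrt{(1-p)(1-q)}\right)^{2}=e^{-I}$ by the definition of $I$; this is the second claimed equation. Dividing the two identities instead gives $\E e^{tX}/\E e^{-tY}=(1-q)/(1-p)$, hence $(\E e^{tX}/\E e^{-tY})^{1/2}=\sqrt{(1-q)/(1-p)}$.

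For the first equation I would note that $\lambda=\log\tfrac{1-q}{1-p}\big/\log\tfrac{p(1-q)}{q(1-p)}=\log\tfrac{1-q}{1-p}\big/(2t)$, so $t\lambda=\tfrac12\log\tfrac{1-q}{1-p}$ and therefore $e^{t\lambda}=\sqrt{(1-q)/(1-p)}$, which matches the expression just computed. There is no genuine obstacle here beyond bookkeeping the square roots; the only point to check is that $0<q<p<1$ ensures $t>0$ and that all radicands are positive, so every manipulation is legitimate.
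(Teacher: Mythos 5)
Your proof is correct and follows essentially the same route as the paper: a direct computation of the two Bernoulli moment generating functions at $e^{\pm t}=\left(\tfrac{p(1-q)}{q(1-p)}\right)^{\pm 1/2}$ followed by algebraic simplification. Your explicit factorization $\E e^{tX}=\sqrt{\tfrac{1-q}{1-p}}\left(\sqrt{pq}+\sqrt{(1-p)(1-q)}\right)$ and its counterpart for $\E e^{-tY}$ is a clean way to get both identities at once (product and ratio), and it also respects the stated assignment $X\sim\Ber(q)$, $Y\sim\Ber(p)$, whereas the paper's proof text momentarily swaps the roles of $p$ and $q$.
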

\begin{proof}
The proof is straightforward and all by calculation. Note that $\E \exp(tX)=pe^t+1-p$ and $\E \exp(tY)=qe^t+1-q$. We can easily obtain
\begin{align*}
\E e^{tX}\E e^{-tY} = (pe^t+1-p)(qe^{-t}+1-q)=(\sqrt{pq}+\sqrt{(1-p)(1-q)})^2=\exp(-I).
\end{align*}
We can justify the first part of Equation \eqref{eqn:chernoff_I_1} in a similar way. 
\end{proof}

\begin{lemma}\label{lem:A_opnorm}[Theorem 5.2 of \cite{lei2015consistency}]
Let $A\in\{0,1\}^{n\times n}$ be a symmetric binary matrix with $A_{i,i}=0,\forall i\in[n]$, and $\{A_{i,j}\}_{i<j}$ are independent Bernoulli random variable. If $p\triangleq\max_{i,j} \E A_{i,j} \geq \log n/n$. Then there exist constants $c,r>0$ such that
\begin{align*}
\opnorm{A-\E A}\leq c\sqrt{np},
\end{align*}
with probability at least $1-n^{-r}$.
\end{lemma}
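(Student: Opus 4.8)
The plan is to follow the spectral-norm argument for inhomogeneous random graphs due to Feige and Ofek, which is the proof given in \cite{lei2015consistency}. Write $W=A-\E A$ and $d=np$; the target is $\opnorm{W}\le c\sqrt d$ on an event of probability at least $1-n^{-r}$. Since $W$ is symmetric with zero diagonal, the first step is the usual discretization: instead of optimizing $x^{T}Wy$ over the unit sphere, I would optimize over the rescaled integer grid $\mathcal T=\{x\in n^{-1/2}\mathbb Z^n:\norm{x}_2\le 1\}$, which is a $\tfrac12$-net of the unit ball and satisfies $|\mathcal T|\le e^{C_1 n}$ for an absolute constant $C_1$; this gives $\opnorm{W}\le C_0\sup_{x,y\in\mathcal T}x^{T}Wy$ for an absolute constant $C_0$. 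The heart of the argument is then to bound $x^{T}Wy$ for each fixed $x,y\in\mathcal T$ by splitting the index set into \emph{light} pairs, where $|x_iy_j|\le\sqrt d/n$, and \emph{heavy} pairs, where $|x_iy_j|>\sqrt d/n$, and treating the two contributions to $x^{T}Wy=\sum_{\mathrm{light}}W_{ij}x_iy_j+\sum_{\mathrm{heavy}}W_{ij}x_iy_j$ by entirely different means.

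For the light part I would use Bernstein's inequality. Grouping the summands over $i<j$ makes them independent and mean zero; they are bounded by $2\sqrt d/n$ in absolute value and their variances sum to at most $4p\,\norm{x}_2^2\norm{y}_2^2\le 4d/n$. Bernstein at deviation level $t=C_2\sqrt d$ then yields a per-pair failure probability $\le 2\exp(-c\,C_2^2 n)$, so choosing $C_2$ a large enough absolute constant and union bounding over the $\le e^{2C_1 n}$ choices of $(x,y)$ controls $\sup_{x,y\in\mathcal T}|\sum_{\mathrm{light}}W_{ij}x_iy_j|$ by $C_2\sqrt d$ with probability at least $1-n^{-r}$. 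This part is routine bookkeeping.

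The heavy part is the real obstacle, and it is also where the hypothesis $p\ge(\log n)/n$ is indispensable, since there no per-pair concentration helps (a single coordinate product can be of order $1$). The deterministic piece $\sum_{\mathrm{heavy}}p_{ij}x_iy_j$ is $O(\sqrt d)$ because $|x_iy_j|>\sqrt{p/n}$ on heavy pairs forces $\sum_{\mathrm{heavy}}|x_iy_j|\le\sqrt{n/p}$. For the random piece $\sum_{\mathrm{heavy}}A_{ij}x_iy_j$ one passes to the event that the graph has the \emph{discrepancy property}: for all $S,T\subseteq[n]$, either $e(S,T)\lesssim|S||T|d/n$, or $e(S,T)\log(e(S,T)n/(|S||T|d))\lesssim\max(|S|,|T|)\log(n/\max(|S|,|T|))$; a Chernoff-plus-union-bound estimate (using $d\gtrsim\log n$) shows this holds with probability at least $1-n^{-r}$. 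Granting it, I would decompose the coordinates of $x$ and $y$ into dyadic magnitude classes, use $\norm{x}_2,\norm{y}_2\le1$ to cap the cardinality of each class, and sum the discrepancy bound over pairs of classes to conclude $\sup_{x,y\in\mathcal T}|\sum_{\mathrm{heavy}}W_{ij}x_iy_j|\le C_3\sqrt d$. Combining the light and heavy bounds gives $\opnorm{W}\le C_0(C_2+C_3)\sqrt d$, i.e.\ the claim with $c=C_0(C_2+C_3)$ after relabelling $r$. I expect the delicate case analysis behind the heavy-pair estimate to absorb almost all the work; everything else reduces to a net argument and Bernstein's inequality, and the full details are in \cite{lei2015consistency}.
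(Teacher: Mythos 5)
Your sketch is correct and is essentially the same argument as the paper's source for this statement: the paper gives no proof and simply cites Theorem 5.2 of Lei and Rinaldo, whose proof is exactly the Feige--Ofek/Friedman--Kahn--Szemer\'edi scheme you outline (grid-net discretization, Bernstein for light pairs, discrepancy plus dyadic decomposition for heavy pairs, with $p\gtrsim(\log n)/n$ entering only in the heavy-pair step). The only nit is that your Bernstein exponent for the light pairs scales linearly (not quadratically) in $C_2$ once the deviation term dominates, but this does not affect the conclusion since the exponent still grows without bound in $C_2$ and so can beat the $e^{C_1 n}$ net cardinality.
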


The following lemma on the operator norm of sparse networks is from \cite{chin2015stochastic}. In the original statement of Lemma 12 in \cite{chin2015stochastic}, ``with probability $1-o(1)$'' is stated. However, its proof in \cite{chin2015stochastic} gives explicit form of the probability that the statement holds, which is at least $1-n^{-1}$.
\begin{lemma}\label{lem:A_trunc_opnorm}[Lemma 12 of \cite{chin2015stochastic}]
Suppose $M$ is random symmetric matrix with zero on the diagonal whose entries above the diagonal are independent with the following distribution
\begin{align*}
M_{i,j}=\begin{cases}
1-p_{i,j},\text{ w.p. }p_{i,j};\\
-p_{i,j},\text{ w.p. }1-p_{i,j}.
\end{cases}
\end{align*}
Let $p\triangleq\max_{i,j} p_{i,j}$ and $\tilde M$ be the matrix obtained from $M$ by zeroing out all the rows and columns having more than $20np$ positive entries. Then there exists some constant $c>0$ such that 
\begin{align*}
\opnorm{\tilde M}\leq c\sqrt{np},
\end{align*}
holds with probability at least $1-n^{-1}$.
\end{lemma}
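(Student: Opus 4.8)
The plan is to run the regularization argument of Feige and Ofek, adapted to the inhomogeneous setting. Write $d=np$ and let $\tilde A\in\{0,1\}^{n\times n}$ be the adjacency matrix recording the surviving positive entries of $M$, so that $\tilde M_{i,j}=\tilde A_{i,j}-p_{i,j}\mathbb{I}\{i\text{ and }j\text{ survive}\}$. By a standard discretization it suffices to bound the bilinear form $u^{\top}\tilde Mv$ uniformly over $u,v$ ranging in a grid $T$ of the unit ball with $\log|T|\lesssim n$, losing only an absolute constant. First I would fix such a pair $u,v$ and call $(i,j)$ a \emph{light} pair if $|u_iv_j|\le\sqrt d/n$ and a \emph{heavy} pair otherwise, splitting $u^{\top}\tilde Mv$ into the two corresponding sums.

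The light part is routine: $\sum_{\text{light}}\tilde M_{i,j}u_iv_j$ is a sum of independent mean-zero terms of magnitude at most $\sqrt d/n$ with total variance at most $\sum_{i,j}p_{i,j}u_i^2v_j^2\le p$, so Bernstein's inequality gives $\p\big[\,|\sum_{\text{light}}\tilde M_{i,j}u_iv_j|\ge C_1\sqrt d\,\big]\le 2e^{-cC_1n}$ for fixed $(u,v)$; taking $C_1$ a large enough absolute constant and union bounding over $T\times T$ makes the light contribution $O(\sqrt d)$ uniformly, with probability $1-e^{-cn}$. The centering part of the heavy pairs is harmless too: there are at most $(n/\sqrt d)^2$ heavy pairs, so Cauchy--Schwarz gives $\sum_{\text{heavy}}p_{i,j}|u_iv_j|\le p\cdot(n/\sqrt d)=\sqrt d$.

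The crux is the random heavy part $\sum_{\text{heavy}}\tilde A_{i,j}u_iv_j$. I would first establish the bounded-discrepancy property of the regularized graph: with probability at least $1-n^{-1}$, every pair of subsets $S,R\subseteq[n]$ obeys either $e_{\tilde A}(S,R)\le e\,p|S||R|$ or $e_{\tilde A}(S,R)\log\big(e_{\tilde A}(S,R)/(p|S||R|)\big)\lesssim(|S|\vee|R|)\log\big(n/(|S|\vee|R|)\big)$, where $e_{\tilde A}(S,R)=\sum_{i\in S,\,j\in R}\tilde A_{i,j}$. This follows from a union bound over all $S,R$: a Chernoff bound handles the typical regime, and the atypically dense pairs are controlled using that after deleting every vertex of degree exceeding $20d$ no surviving vertex contributes more than $20d$ edges --- this is precisely where the truncation is used. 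Granting this, I would bucket the coordinates of $u$ and $v$ by dyadic magnitude, $I_s=\{i:2^{-s-1}<|u_i|\le 2^{-s}\}$ and $R_t$ analogously (only $O(\log n)$ scales being relevant once heaviness is imposed), bound $\sum_{\text{heavy}}\tilde A_{i,j}u_iv_j\le\sum_{s,t}2^{-s}2^{-t}e_{\tilde A}(I_s,R_t)$, and on each block apply whichever regime of the discrepancy bound is active, using $|I_s|\lesssim 2^{2s}\wedge(n/d)$ and $e_{\tilde A}(I_s,[n])\le 20d|I_s|$ for the one-sided blocks. Summing the near-geometric series over the $O(\log^2 n)$ blocks gives $\sum_{\text{heavy}}\tilde A_{i,j}u_iv_j=O(\sqrt d)$ uniformly over $T\times T$ on the discrepancy event. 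Combining the three estimates and re-absorbing the discretization constant yields $\opnorm{\tilde M}\le c\sqrt{np}$, and since the binding event is the discrepancy one the overall probability is at least $1-n^{-1}$.

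The step I expect to be the main obstacle is the discrepancy lemma together with its dyadic consequence: the union bound over exponentially many pairs $(S,R)$ is delicate precisely in the dense regime, and converting the two-regime edge-count estimate into an $O(\sqrt d)$ spectral bound --- choosing the dyadic cutoffs, separating one-sided from two-sided blocks, and verifying that the $\log$ factors telescope --- is where essentially all the effort lies. The light-couple estimate and the net reduction are standard and quick.
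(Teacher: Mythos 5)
The paper does not actually prove this lemma: it is imported verbatim from Lemma 12 of Chin, Rao and Vu, with the only added remark being that the proof there yields the explicit probability $1-n^{-1}$. Your sketch is the standard Kahn--Szemer\'edi/Feige--Ofek regularization argument (net reduction, light/heavy couples, Bernstein for the light part, a two-regime discrepancy estimate exploiting the $20np$ degree cap, dyadic decomposition for the heavy part), which is exactly the route taken in the cited source, so you are reconstructing the outsourced proof rather than proposing a different one, and the overall plan is sound.

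One step, as written, would fail and needs the standard repair. You treat $\sum_{\text{light}}\tilde M_{i,j}u_iv_j$ as ``a sum of independent mean-zero terms,'' but the entries of $\tilde M$ are neither independent nor mean zero: each carries the survival indicator of its row and column, which depends on the whole matrix. Bernstein has to be applied to the untruncated light sum $\sum_{\text{light}} M_{i,j}u_iv_j$, and the correction $\sum_{\text{light}}(M_{i,j}-\tilde M_{i,j})u_iv_j$, supported on pairs with a removed endpoint, must be controlled separately --- e.g.\ by bounding the number of removed vertices and the total degree mass they carry (a bound of exactly the type the paper proves in Lemma \ref{lem:sum_zi}), combined with $|u_iv_j|\le \sqrt{np}/n$ on light pairs. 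This is routine but not free, especially in the sparse regime $np=O(1)$ where removed vertices occur with non-negligible probability. The remaining ingredients --- the union bound giving the two-regime discrepancy property and the dyadic summation converting it into an $O(\sqrt{np})$ bound on the heavy part --- are, as you say, where the real work lies; with the light-couple step patched as above, the argument goes through and delivers the stated $1-n^{-1}$ probability.
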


\begin{lemma}\label{lem:sum_zi}
Let $A\in\{0,1\}^{n\times n}$ be a symmetric binary matrix with $A_{i,i}=0,\forall i\in[n]$, and $\{A_{i,j}\}_{i<j}$ are independent Bernoulli random variable. Let $p\geq \max_{i,j} \E A_{i,j}$. Define $S=\{i\in[n],\sum_{j}A_{i,j}\geq 20np\}$ and $Z_i=\sum_j|A_{i,j}-\E A_{i,j}|\mathbb{I}\{i\in S\}$. Then with probability at least $1-\exp(-5np)$, we have
\begin{align*}
\sum_{i} Z_i\leq 20n^2p\exp(-5np).
\end{align*}
\end{lemma}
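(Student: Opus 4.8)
The plan is to reduce $\sum_i Z_i$ to a sum over the few high-degree vertices and then control its expectation by a Chernoff bound on the degrees, finishing with Markov's inequality. Write $d_i=\sum_j A_{i,j}$ for the degree of node $i$, so that $S=\{i:d_i\geq 20np\}$, and note $\E d_i=\sum_{j\neq i}\E A_{i,j}\leq np$. First I would bound each summand pointwise: for $i\in S$,
\[
Z_i=\sum_j|A_{i,j}-\E A_{i,j}|\leq\sum_j A_{i,j}+\sum_j \E A_{i,j}\leq d_i+np\leq d_i+\tfrac{1}{20}d_i\leq 2d_i,
\]
using $d_i\geq 20np\geq 20\,\E d_i$ on $S$, while $Z_i=0$ for $i\notin S$. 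Hence $\sum_i Z_i\leq 2\sum_i d_i\mathbb{I}\{d_i\geq 20np\}$, and it suffices to show $\sum_i d_i\mathbb{I}\{d_i\geq 20np\}\leq 10n^2p\exp(-5np)$ with probability at least $1-\exp(-5np)$.

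Next I would estimate $\E[d_i\mathbb{I}\{d_i\geq 20np\}]$. Setting $\theta=\log 20>0$ and using $\mathbb{I}\{d_i\geq 20np\}\leq e^{\theta(d_i-20np)}$ gives $d_i\mathbb{I}\{d_i\geq 20np\}\leq e^{-20np\theta}d_ie^{\theta d_i}$. Since $d_i$ is a sum of independent Bernoullis with mean at most $np$, one has $\E[e^{\theta d_i}]\leq\exp(np(e^\theta-1))$ and, by differentiating the log-moment-generating function, $\E[d_ie^{\theta d_i}]\leq npe^\theta\,\E[e^{\theta d_i}]$. Combining, $\E[d_i\mathbb{I}\{d_i\geq 20np\}]\leq 20np\exp(np(19-20\log 20))\leq 20np\exp(-40np)$, so summing over $i$, $\E[\sum_i Z_i]\leq 40n^2p\exp(-40np)$. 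Markov's inequality then gives $\p(\sum_i Z_i\geq 20n^2p\exp(-5np))\leq 2\exp(-35np)\leq\exp(-5np)$ whenever $np$ is bounded away from $0$. In the complementary degenerate regime ($20np<1$, where $\exp(-5np)$ is itself close to $1$) the set $S$ consists of the non-isolated vertices and the crude bound $\E[\sum_i Z_i]\leq\E\sum_{i,j}(A_{i,j}+\E A_{i,j})\leq 2n^2p$ together with Markov still closes the estimate with room to spare.

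I expect the only real obstacle to be getting the exponential rate in $\E[d_i\mathbb{I}\{d_i\geq 20np\}]$ sharp enough: replacing $d_i$ by the trivial bound $n$ before applying Markov loses a factor of order $1/p$ and is fatal after dividing by $n^2p\exp(-5np)$, so the truncated-moment estimate $\E[d_ie^{\theta d_i}]\leq npe^\theta\E[e^{\theta d_i}]$ (equivalently, summing the Chernoff tail $\p(d_i\geq t)\leq e^{t-\mu}(\mu/t)^t$ against the weight $t$ over $t\geq 20np$) is what makes the constants work out. Everything else is bookkeeping of constants.
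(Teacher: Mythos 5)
Your proof is correct, and it reaches the bound by a genuinely different route than the paper. The paper works directly with $Z_i=\sum_j|A_{i,j}-\E A_{i,j}|\mathbb{I}\{i\in S\}$ and bounds $\E Z_i$ by tail integration: a Bernstein bound $\p(Z_i>s)\leq \exp(-s/2)$ for $s\geq 20np$ together with a Bernstein bound on $\p(i\in S)=\p(Z_i>0)\leq\exp(-21np/2)$, giving $\E Z_i\leq 20np\exp(-10np)$, and then Markov on $\sum_i Z_i$ yields exactly the stated $\exp(-5np)$. You instead reduce pointwise to the degree, $Z_i\leq 2d_i\mathbb{I}\{d_i\geq 20np\}$ (valid since $np\leq d_i/20$ on $S$), and compute a truncated exponential moment $\E[d_i\mathbb{I}\{d_i\geq 20np\}]\leq e^{-20np\log 20}\,\E[d_ie^{d_i\log 20}]\leq 20np\exp(-40np)$, which is a clean Chernoff-style calculation and in fact gives a sharper exponent than the paper's $\exp(-10np)$; both proofs finish with Markov's inequality applied to $\sum_i Z_i$. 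The price of your extra factor $2$ (from $Z_i\leq 2d_i$ and from the two-sided bookkeeping) is that Markov only gives failure probability $2\exp(-35np)$, which beats $\exp(-5np)$ only when $np\geq (\log 2)/30$; you correctly notice this and close the small-$np$ regime with the crude bound $\E\sum_iZ_i\leq 2n^2p$, and since that regime ($np<1/20$, say) overlaps the range where the main estimate applies, all values of $np$ are covered. The paper's constants are arranged so that no case split is needed, but your argument is self-contained, avoids Bernstein's inequality entirely, and the truncated-MGF step $\E[d_ie^{\theta d_i}]\leq npe^{\theta}\E[e^{\theta d_i}]$ is verified correctly.
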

\begin{proof}
Note that $\E \sum_j|A_{i,j}-\E A_{i,j}| \leq 2np(1-p)\leq 2np$. For any $s\geq 20np$, we have
\begin{align*}
\p(Z_i> s)&\leq \p\left[\sum_j|A_{i,j}-\E A_{i,j}|-\E \sum_j|A_{i,j}-\E A_{i,j}|>s-2np\right]\\
&\leq \exp\left[-\frac{\frac{1}{2}(s-2np)^2}{np + \frac{1}{3}(s-2np)}\right]\\
&\leq \exp(-s/2),
\end{align*}
by implementing Bernstein inequality. Applying Bernstein inequality again we have
\begin{align*}
\p(Z_i>0) &= \p\left[\sum_{j} A_{i,j}\geq 20np\right]\\
&\leq \p\left[\sum_j A_{i,j}-\E \sum_j A_{i,j}\geq 18 np\right]\\
&\leq \exp\left[-\frac{(18np)^2/2}{np+18np/3}\right]\\
&\leq \exp(-21np/2).
\end{align*}
Thus, we are able to bound $\E Z_i$ with
\begin{align*}
\E Z_i&\leq \int_0^{20np}\p(Z_i>0)\diff s +\int_{20np}^\infty \p(Z_i>s)\diff s\\
&\leq 20np\exp(-21np/2) + \int_{20np}^\infty \exp(-s/2)\\
&\leq 20np\exp(-10np).
\end{align*}
By Markov inequality, we have
\begin{align*}
\p\left[\sum_{i,j}| A_{i,j}-\E A_{i,j}|\mathbb{I}\{i\in S\}\geq 20n^2p\exp(-5np)\right]&=\p\left[\sum_i Z_i\geq 20n^2p\exp(-5np)\right]\\
&\leq \frac{n\E Z_1}{20n^2p\exp(-5np)}\\
&\leq \exp(-5np).
\end{align*}
\end{proof}

\begin{proposition}\label{prop:I}
Under the assumption that $0<q<p=o(1)$. For $I=-2\log\left[\sqrt{pq}+\sqrt{(1-p)(1-q)}\right]$ we have
\begin{align*}
I=(1+o(1))(\sqrt{p}-\sqrt{q})^2.
\end{align*}
Consequently, $(p-q)^2/(4p)\leq I\leq (p-q)^2/p$.
\end{proposition}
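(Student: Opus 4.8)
Write $S=\sqrt{pq}+\sqrt{(1-p)(1-q)}$, the Bhattacharyya coefficient of $\Ber(p)$ and $\Ber(q)$, so that $I=-2\log S$; the plan is to pass through the squared Hellinger distance $1-S$. The cornerstone is the algebraic identity, obtained by expanding the two squares on the right,
\begin{align*}
1-S=\tfrac12(\sqrt p-\sqrt q)^2+\tfrac12(\sqrt{1-p}-\sqrt{1-q})^2,
\end{align*}
together with the bound $S\le 1$ (Cauchy--Schwarz). First I would check $1-S=o(1)$: the first summand is at most $p=o(1)$, and the second equals $(p-q)^2/(\sqrt{1-p}+\sqrt{1-q})^2\le(p-q)^2\le p^2=o(1)$.

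Next I would show that the second summand is negligible next to the first. Rewriting each as $(p-q)^2$ divided by a square, their ratio is $(\sqrt p+\sqrt q)^2/(\sqrt{1-p}+\sqrt{1-q})^2\le p/(1-p)=o(1)$, using $q<p$ in the numerator and $1-q>1-p$ in the denominator. Hence $1-S=\tfrac12(\sqrt p-\sqrt q)^2(1+o(1))$. Since $1-S\to0$, applying $-\log(1-x)=x(1+o(1))$ as $x\to0$ (equivalently the two-sided bound $x\le-\log(1-x)\le x/(1-x)$ for $x\in[0,1)$) gives $I=-2\log S=2(1-S)(1+o(1))=(1+o(1))(\sqrt p-\sqrt q)^2$, which is the first display of the proposition.

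For the ``consequently'' part I would use $(\sqrt p-\sqrt q)^2=(p-q)^2/(\sqrt p+\sqrt q)^2$ and note that $0<q<p$ forces $p<(\sqrt p+\sqrt q)^2<4p$, so $(p-q)^2/(4p)<(\sqrt p-\sqrt q)^2<(p-q)^2/p$. The lower bound on $I$ is then immediate and non-asymptotic, since $-\log u\ge1-u$ yields $I=-2\log S\ge2(1-S)\ge(\sqrt p-\sqrt q)^2>(p-q)^2/(4p)$; the upper bound $I\le(p-q)^2/p$ follows for $n$ large by combining the asymptotic equivalence just established with the strict inequality $(\sqrt p+\sqrt q)^2>p$. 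The only place demanding care is this last step: the lower-order contributions — the $(\sqrt{1-p}-\sqrt{1-q})^2$ term inside $1-S$ and the $1/S$ factor coming from $-\log u\le(1-u)/u$ — must be tracked as genuine $o(1)$ corrections so they do not spoil the constant in the upper bound. There is no deeper obstacle here; every ingredient is an elementary inequality for $\log$ or for $(\sqrt p\pm\sqrt q)^2$, and downstream this proposition is only ever invoked through the clean two-sided relation $I\asymp(p-q)^2/p$ that these bounds encode.
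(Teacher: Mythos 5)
Your main asymptotic claim is proved correctly, and by a genuinely different route than the paper takes: the paper simply cites Lemma B.1 of Zhang and Zhou (2016) and gives no argument, whereas you give a self-contained derivation through the Hellinger identity $1-S=\tfrac12(\sqrt p-\sqrt q)^2+\tfrac12(\sqrt{1-p}-\sqrt{1-q})^2$, the ratio bound $(\sqrt p+\sqrt q)^2/(\sqrt{1-p}+\sqrt{1-q})^2\le p/(1-p)$, and $-\log(1-x)=x(1+o(1))$. That chain is sound, and your lower bound $I\ge 2(1-S)\ge(\sqrt p-\sqrt q)^2\ge(p-q)^2/(4p)$ is correct and even non-asymptotic.

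The gap is exactly at the step you flagged and then dismissed: deducing $I\le(p-q)^2/p$ from ``$I=(1+o(1))(\sqrt p-\sqrt q)^2$ together with the strict inequality $(\sqrt p+\sqrt q)^2>p$.'' A strict inequality with no quantitative margin cannot absorb a $(1+o(1))$ factor. The margin here is $(\sqrt p+\sqrt q)^2-p=2\sqrt{pq}+q$, which is $o(p)$ whenever $q/p\to0$, so the multiplicative slack $(\sqrt p+\sqrt q)^2/p=1+O(\sqrt{q/p})$ can vanish faster than your $o(1)$ error terms (which are of order $p$ and $1-S$, coming from the second Hellinger summand and the $1/S$ factor). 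Indeed, under the bare hypothesis $0<q<p=o(1)$ the inequality with constant exactly $1$ can fail: take $q=e^{-1/p}$, so $\sqrt{pq}$ is negligible, $S\le\sqrt{1-p}\le 1-p/2-p^2/8$, hence $I\ge p+p^2/4-o(p^2)>p\ge(p-q)^2/p$ for small $p$. So the ``consequently'' upper bound cannot be closed by your argument (or any argument) without a lower bound on $q/p$. This is harmless where the proposition is actually invoked, since the paper assumes $c_0p^*<q^*<p^*$ (see Theorem \ref{thm:mf_iterative} and Proposition \ref{prop:lambda}); under $q\ge c_0p$ you get $(\sqrt p+\sqrt q)^2\ge(1+\sqrt{c_0})^2p$, a constant-factor margin that does absorb the $1+o(1)$, and your proof then goes through. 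You should either add that hypothesis to your upper-bound step or weaken the constant (e.g., prove $I\le(1+o(1))(p-q)^2/p$), rather than assert the clean constant-$1$ bound from a strict inequality alone.
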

\begin{proof}
It is a partial result of Lemma B.1 in \cite{zhang2016minimax}.
\end{proof}

\begin{proposition}\label{prop:lambda}
Define $\lambda =\log\frac{1-q}{1-p}/\log\frac{p(1-q)}{q(1-p)}$. For any $p,q>0$ such that $p,q=o(1)$ and $p\asymp q$, there exists a constant $0<c<1/2$ such that
\begin{align*}
\frac{\lambda -q}{p-q}\in(c,1-c).
\end{align*}
\end{proposition}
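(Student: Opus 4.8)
The plan is to recognize $\lambda$ as a weighted average. Writing the numerator and denominator of $\lambda$ as integrals,
\begin{align*}
\log\frac{1-q}{1-p} = \int_q^p \frac{dx}{1-x}, \qquad \log\frac{p(1-q)}{q(1-p)} = \int_q^p \frac{dx}{x(1-x)},
\end{align*}
(we may assume $p\neq q$, so that $\lambda$ is well defined) and noting $\frac{1}{1-x} = x\cdot\frac{1}{x(1-x)}$, we get
\begin{align*}
\lambda = \frac{\int_q^p x\, w(x)\,dx}{\int_q^p w(x)\,dx}, \qquad w(x) = \frac{1}{x(1-x)} > 0,
\end{align*}
i.e.\ $\lambda$ is the mean of $x$ over $[q,p]$ against the positive weight $w$. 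In particular $q<\lambda<p$, which already yields $\frac{\lambda-q}{p-q}\in(0,1)$ and hence Equation~(\ref{eqn:lambda_interval}); the real content is that this ratio stays bounded away from $0$ and $1$. A first reduction: the substitution $(p,q)\leftrightarrow(q,p)$ leaves $\lambda$ unchanged and replaces $\frac{\lambda-q}{p-q}$ by $1-\frac{\lambda-q}{p-q}$, while the target set $(c,1-c)$ is symmetric about $1/2$; so I may assume $p>q$.

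Next, set $\delta = (p-q)/q$; since $p\asymp q$ and $p>q$, $\delta$ lies in a fixed interval $(0,C]$. Substituting $x = q(1+\delta u)$ with $u\in[0,1]$ and using that $\frac{1}{1-x} = 1 + O(p)$ uniformly for $x\in[q,p]$ (because $p=o(1)$), the factors of $q$ cancel in the ratio and I obtain
\begin{align*}
\frac{\lambda-q}{p-q} = \big(1+O(p)\big)\,H(\delta), \qquad H(\delta) := \frac{\int_0^1 \frac{u}{1+\delta u}\,du}{\int_0^1 \frac{1}{1+\delta u}\,du} = \frac{\delta-\log(1+\delta)}{\delta\log(1+\delta)},
\end{align*}
with the $O(p)$ term uniform over the admissible range of $\delta$. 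The elementary inequalities $\log(1+\delta)<\delta$ and $(1+\delta)\log(1+\delta)>\delta$ for $\delta>0$ (the latter being $\log u > 1 - 1/u$ for $u>1$) give $0<H(\delta)<1$, and a Taylor expansion at $\delta=0$ shows $H(\delta)\to\tfrac12$ as $\delta\to0^+$. Hence $H$ extends continuously to the compact interval $[0,C]$ with values in $(0,1)$, so $H([0,C])\subseteq[m,M]$ for some constants $0<m\le M<1$. Choosing $p$ small enough (equivalently $n$ large enough) that the uniform factor $1+O(p)$ lies in $[1-\epsilon_0,1+\epsilon_0]$ with $\epsilon_0$ so small that $(1+\epsilon_0)M<1$, we conclude $\frac{\lambda-q}{p-q}\in(c,1-c)$ with $c = \min\{(1-\epsilon_0)m,\,1-(1+\epsilon_0)M\}$ (replace $c$ by $\min\{c,1/4\}$ if a value below $1/2$ is desired).

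The only genuine obstacle is the regime $p/q\to1$, i.e.\ $\delta\to0$: there the crude estimate $\lambda\approx (p-q)/\log(p/q)$ is degenerate because $\log(p/q)$ is itself small, so keeping only the leading term loses control of $\frac{\lambda-q}{p-q}$. The weighted-average representation circumvents this precisely because $H(\delta)$ has a finite nonzero limit $\tfrac12$ at $\delta=0$; everything else is a routine expansion of $\log(1+\delta)$ together with a compactness argument.
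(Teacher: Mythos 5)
Your proof is correct, but it takes a genuinely different route from the paper's. The paper writes $x=p-q$, expresses $\lambda$ as $\bigl[1+\log(1+x/q)/\log(1+x/(1-q-x))\bigr]^{-1}$, and then argues by cases: for $x\geq q/10$ it bounds the ratio directly using $\log(1+s)\leq s$, and for $x<q/10$ it performs second-order Taylor expansions of both logarithms with explicitly controlled remainders ($\epsilon_1,\epsilon_2\leq 1/10$) and algebraic manipulation to reach $(\lambda-q)/(p-q)\geq 1/8$, finally asserting the companion bound $(p-\lambda)/(p-q)>c$ ``by exactly the same discussion.'' You instead represent the two logarithms as $\int_q^p\frac{dx}{1-x}$ and $\int_q^p\frac{dx}{x(1-x)}$, so that $\lambda$ becomes a weighted mean of $x$ over $[q,p]$ with weight $1/(x(1-x))$ — which immediately yields $q<\lambda<p$, i.e.\ Equation~(\ref{eqn:lambda_interval}) — and then reduce the two-sided bound, after factoring out the uniform $1+O(p)$ from $1/(1-x)$, to the single function $H(\delta)=\frac{\delta-\log(1+\delta)}{\delta\log(1+\delta)}$ of $\delta=(p-q)/q$, which is continuous on the compact range $[0,C]$ forced by $p\asymp q$, takes values in $(0,1)$, and tends to $1/2$ at $0$. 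This treats the delicate regime $p/q\to 1$ and the bounded-away regime uniformly, exploits the $p\leftrightarrow q$ symmetry explicitly rather than implicitly, and avoids the paper's remainder bookkeeping; what it gives up is the explicit numerical constant (the paper's $1/8$), replaced by a compactness argument, and it requires the trivial final adjustment (e.g.\ halving your $c$) to make the endpoints of your interval strict, which does not affect the argument.
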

\begin{proof}
First we are going to establish the lower bound. Let $x=p-q$, and then we can rewrite $\lambda$ as
\begin{align*}
\lambda = \frac{1}{1+\frac{\log(1+x/q)}{\log(1+x/(1-q-x))}}.
\end{align*}
\paragraph{Case I: $x\geq q/10$} Define $s = (p-q)/q$. Since $p\asymp q$ we have $s\geq 1/10$ and also upper bounded by some constant. We have
\begin{align*}
\frac{\lambda -q}{p-q}&=\frac{1}{s}\left[\frac{1}{q}\frac{1}{1+\frac{\log(1+s)}{\log(1+sq/(1-(s+1)q))}}-1\right]\\
&= \frac{1}{s}\left[\frac{(1-q)\log(1+sq/(1-(s+1)q))-q\log(1+s)}{q\log(1+sq/(1-(s+1)q))+q\log(1+s)}\right]\\
&\geq \frac{1}{s}\frac{(1-q)\frac{sq}{1-(s+1)q}-q\log(1+s)}{2q\log(1+s)}\\
&\geq \frac{1}{8}\frac{1-q}{\log(1+s)},
\end{align*}
which is lower bounded by some constant $c>0$.
\paragraph{Case II: $x < q/10$} By Taylor theorem, there exist constants $0\leq \epsilon_1,\epsilon_2\leq 1/10$ such that 
\begin{align*}
&\log\left[1+\frac{x}{q}\right] = \frac{x}{q}-\frac{1-\epsilon_1}{2}\left[\frac{x}{q}\right]^2,\\
\text{and }&\log\left[1+\frac{x}{1-q-x}\right] = \frac{x}{1-q-x}-\frac{1-\epsilon_2}{2}\left[\frac{x}{1-q-x}\right]^2.
\end{align*}
Thus, we have
\begin{align*}
\frac{\log(1+\frac{x}{q})}{\log(1+\frac{x}{1-q-x})}=\frac{q(1-q)^2-\left[2q(1-q)+\frac{1-\epsilon_1}{2}(1-q)^2\right]x+c_1 x^2 +c_2 x^3}{q^2(1-q)-\frac{3-\epsilon_2}{2}q^2x},
\end{align*}
where $c_1=(1-\epsilon_1)(1-q)+q$ and $c_2=-(1-\epsilon_1)/2$. Thus,
\begin{align*}
\frac{\lambda -q}{p-q}& = \frac{1}{x}\left[\frac{q^2(1-q)-\frac{3-\epsilon_2}{2}q^2x}{q(1-q)-\left[2q(1-q)+\frac{1-\epsilon_1}{2}(1-q)^2+\frac{3-\epsilon_2}{2}q^2\right]x+c_1 x^2 +c_2 x^3}-q\right]\\
&=\frac{\left[\frac{1}{2}q(1-q)+\frac{\epsilon_2}{2}q^2(1-q)-\frac{\epsilon_1}{2}(1-q)^2q\right]+c_1qx+c_2qx^2}{q(1-q)-\left[2q(1-q)+\frac{1-\epsilon_1}{2}(1-q)^2+\frac{3-\epsilon_2}{2}q^2\right]x+c_1 x^2 +c_2 x^3}
\end{align*}
Note that $|c_1|,|c_2|\leq 1$. We have
\begin{align*}
\frac{\lambda -q}{p-q} \geq \frac{\frac{1}{4}q(1-q)}{2q(1-q)}\geq 1/8.
\end{align*}

By using exactly the same discussion, we can show $(p-\lambda)/(p-q)>c$. Thus, we proved the desired bound stated in the proposition.
\end{proof}

\subsection{Statements and Proofs of Lemmas and Propositions for Theorem \ref{thm:global}}

\begin{lemma}\label{lem:mf_loose}
Let $\Ztruth\in\Pi_0$. Assume $p^*,q^*=o(1)$ and $p^*\asymp q^*$. Define $t^*,\lambda^*$ and $\hat\pi^\mf$ the same way as in Theorem \ref{thm:global}. If $nI/[k\log kw]\rightarrow\infty$, we have with probability at least $1-e^35^{-n}$,
\begin{align*}
\norm{\Ztruth\Ztrutht -\hat\pi^\mf(\hat\pi^{\mf})^T}_1\lesssim n^2/\sqrt{nI}.
\end{align*}
If we further assume $\Ztruth\in\Pi_0^{(\rho,\rho')}$ with arbitrary $\rho,\rho'$, and then we have with probability at least $1-e^35^{-n}$,
\begin{align*}
\ell(\hat\pi^\mf,\Ztruth)\lesssim \rho^{-1}n\sqrt{k^2/(nI)}.
\end{align*}
\end{lemma}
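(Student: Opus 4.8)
The plan is to use the variational characterization $\hat\pi^\mf=\argmax_{\pi\in\Pi_1}f'(\pi;A)$ with $f'(\pi;A)=\langle A+\lambda^*I_n-\lambda^*1_n1_n^T,\pi\pi^T\rangle-\frac1{t^*}\sum_i\kl(\tcate(\pi_{i,\cdot})\|\tcate(\pipr_{i,\cdot}))$ that was derived in the proof of Theorem \ref{thm:global}. Writing $\Delta=\Ztruth\Ztrutht-\hat\pi^\mf(\hat\pi^\mf)^T$, I would bound $\|\Delta\|_1$ by sandwiching the quantity $\langle\E A+\lambda^*I_n-\lambda^*1_n1_n^T,\Delta\rangle$ between an upper bound coming from optimality plus a uniform concentration inequality, and a lower bound coming from a deterministic ``growth'' property of the population objective. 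The statement about $\ell(\hat\pi^\mf,\Ztruth)$ then follows from a purely combinatorial inequality converting $\|\Ztruth\Ztrutht-\pi\pi^T\|_1$ into $\ell(\pi,\Ztruth)$ when the communities are balanced.

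\textbf{Step 1: the matrix bound.} Since $\Ztruth\in\Pi_0\subset\Pi_1$, optimality gives $f'(\hat\pi^\mf;A)\ge f'(\Ztruth;A)$; using $\kl(\cdot\|\cdot)\ge0$ and $\kl(\tcate(e_{z_i})\|\tcate(\pipr_{i,\cdot}))=-\log\pipr_{i,z_i}\le\log(wk)$ (because $\pipr_{i,z_i}\ge 1/(wk)$ by definition of $w$), this rearranges to $\langle A+\lambda^*I_n-\lambda^*1_n1_n^T,\Delta\rangle\le n\log(wk)/t^*$. Splitting $A=\E A+(A-\E A)$ and applying Lemma \ref{lem:A_minus_EA_Grothendiecks} (which bounds $\sup_{\pi\in\Pi_1}|\langle A-\E A,\pi\pi^T\rangle|\le 6n\sqrt{np^*}$ with probability $\ge 1-e^35^{-n}$) to both $\Ztruth$ and $\hat\pi^\mf$ yields $\langle\E A+\lambda^*I_n-\lambda^*1_n1_n^T,\Delta\rangle\le n\log(wk)/t^*+12n\sqrt{np^*}$. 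For the lower bound, note that for $i\neq j$ the $(i,j)$ entry of $\E A+\lambda^*I_n-\lambda^*1_n1_n^T$ equals $P_{i,j}-\lambda^*$, which is $p^*-\lambda^*>0$ when $z_i=z_j$ and $q^*-\lambda^*<0$ otherwise, while $\Delta_{i,j}=1-(\hat\pi^\mf(\hat\pi^\mf)^T)_{i,j}\ge0$ in the first case and $\Delta_{i,j}=-(\hat\pi^\mf(\hat\pi^\mf)^T)_{i,j}\le0$ in the second; hence $\langle\E A+\lambda^*I_n-\lambda^*1_n1_n^T,\Delta\rangle=\sum_{i\ne j}(P_{i,j}-\lambda^*)\Delta_{i,j}\ge\min\{p^*-\lambda^*,\lambda^*-q^*\}\sum_{i\ne j}|\Delta_{i,j}|\ge\min\{p^*-\lambda^*,\lambda^*-q^*\}(\|\Delta\|_1-n)$, using $\sum_i|\Delta_{i,i}|\le n$ and Proposition \ref{prop:lambda} to bound $\min\{p^*-\lambda^*,\lambda^*-q^*\}$ below by a constant multiple of $p^*-q^*$. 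Combining the two bounds and simplifying with $t^*\asymp(p^*-q^*)/p^*$ and $I\asymp(p^*-q^*)^2/p^*$ (Propositions \ref{prop:I}, \ref{prop:lambda}), the $n\sqrt{np^*}$ term becomes the dominant $n^2/\sqrt{nI}$, while the additive $n$ and the term $\asymp n\log(wk)/I$ are both $o(n^2/\sqrt{nI})$ because $nI/[k\log(kw)]\to\infty$ (which forces in particular $nI\gg\log^2(wk)$ and $I=o(n)$).

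\textbf{Step 2: from the matrix bound to $\ell$.} Now assume $\Ztruth\in\Pi_0^{(\rho,\rho')}$ and set $v_a:=\sum_{i:z_i=a}\pi_{i,\cdot}\in\mathbb R^k_{\ge0}$, so $\|v_a\|_1=n_a$. A direct computation gives the \emph{identity} $n_a^2-\|v_a\|_2^2=\sum_{i,j:z_i=z_j=a}(1-(\pi\pi^T)_{i,j})=:\|\Delta\|_{1,a}$ (the block-$a$ restricted $\ell_1$ norm of $\Delta$, nonnegative since $(\pi\pi^T)_{i,j}\le1$), and likewise $\langle v_a,v_b\rangle\le\|\Delta\|_{1,(a,b)}$ for $a\neq b$; these blocks are disjoint so the restricted norms sum to at most $\|\Delta\|_1$. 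Letting $\sigma(a)=\argmax_b v_{a,b}$, the identity together with $\|v_a\|_2^2\le v_{a,\sigma(a)}n_a$ gives $n_a-v_{a,\sigma(a)}\le\|\Delta\|_{1,a}/n_a$, and the between-block bound forces $\sigma$ injective (hence a bijection) once $\|\Delta\|_1$ is small compared with $(\min_a n_a)^2$, which the standing assumptions on $nI$ guarantee. Taking $\phi=\sigma^{-1}$ in the definition of $\ell$, $\ell(\pi,\Ztruth)\le\sum_a\sum_{i:z_i=a}\|\pi_{i,\cdot}-e_{\sigma(a)}\|_1=\sum_a2(n_a-v_{a,\sigma(a)})\le\sum_a2\|\Delta\|_{1,a}/n_a\le(2/\min_a n_a)\|\Delta\|_1\le(2k/(\rho n))\|\Delta\|_1$; plugging in Step 1 gives $\ell(\hat\pi^\mf,\Ztruth)\lesssim(k/(\rho n))\cdot n^2/\sqrt{nI}=\rho^{-1}n\sqrt{k^2/(nI)}$.

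\textbf{Main obstacle.} The growth inequality in Step 1 is essentially free once one reads off the sign pattern of $\E A-\lambda^*1_n1_n^T$ and invokes Proposition \ref{prop:lambda}, and the combinatorial conversion in Step 2 is routine modulo the bijectivity of $\sigma$; the genuinely delicate point is the bookkeeping in Step 1, namely verifying that both the prior-penalty contribution $n\log(wk)/t^*$ and the crude diagonal slack $n$ are swamped by $n^2/\sqrt{nI}$ --- this is exactly what dictates the hypothesis $nI/[k\log(kw)]\to\infty$ rather than merely $nI\to\infty$, and it is where Propositions \ref{prop:I} and \ref{prop:lambda} have to be used carefully.
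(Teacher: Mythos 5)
Your Step 1 is correct and is essentially the paper's argument: optimality of $\hat\pi^\mf$ plus the uniform Grothendieck bound of Lemma \ref{lem:A_minus_EA_Grothendiecks} gives the upper bound on $\langle \E A+\lambda^* I_n-\lambda^* 1_n1_n^T,\Delta\rangle$, and your direct sign-pattern lower bound $\geq c(p^*-q^*)(\norm{\Delta}_1-n)$ is just an unpacked version of Proposition \ref{prop:alpha_plus_gamma} combined with Proposition \ref{prop:lambda} and $2(\alpha+\gamma)\geq \norm{\Delta}_1-n/2$; the final bookkeeping (including the domination of the $n\log(kw)/I$ term) is at the same level as the paper's.

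Step 2, however, has a genuine gap. You convert the matrix bound into a bound on $\ell$ by defining $\sigma(a)=\argmax_b v_{a,b}$ and choosing $\phi=\sigma^{-1}$, which requires $\sigma$ to be a bijection. Your justification is that injectivity is ``forced once $\norm{\Delta}_1$ is small compared with $(\min_a n_a)^2$, which the standing assumptions on $nI$ guarantee.'' That last claim is false under the lemma's hypotheses: the smallness you need is $n^2/\sqrt{nI}\ll (\rho n/k)^2$, i.e.\ $nI\gg k^4/\rho^4$, whereas the lemma only assumes $nI/[k\log kw]\rightarrow\infty$ and explicitly allows \emph{arbitrary} $\rho,\rho'$ (and growing $k$). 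In the regime $k^2/\rho^2\ll nI\ll k^4/\rho^4$ the asserted bound $\ell(\hat\pi^\mf,\Ztruth)\lesssim \rho^{-1}n\sqrt{k^2/(nI)}$ is non-vacuous, yet your injectivity argument does not apply and two blocks may share the same maximizing column, so the chain $\ell\leq (2k/(\rho n))\norm{\Delta}_1$ is not established there. The paper avoids this entirely: it never passes through $\norm{\Delta}_1$ for the second claim, but instead uses the purely deterministic Proposition \ref{prop:alpha_gamma}, valid for every $\pi\in\Pi_1$ with no smallness condition, whose proof handles exactly the collision case via the $S_1/S_2$ decomposition (using that one may take the optimal permutation, $\norm{\pi-\Ztruth}_1=\ell(\pi,\Ztruth)$, so a collision makes the between-block term $\gamma$ itself large), and then pairs $\alpha+\gamma\geq \frac{\rho n}{16k}\ell(\pi,\Ztruth)$ directly with the inner-product upper bound. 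To repair your argument you would either need to add a hypothesis of the type $nI\gg k^4/\rho^4$ (which changes the statement) or replace the crude ``bijectivity by smallness'' step with a collision analysis of the kind carried out in Proposition \ref{prop:alpha_gamma}.
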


\begin{proof}
Form Lemma \ref{lem:A_minus_EA_Grothendiecks}, with probability at least $1-e^35^{-n}$, we have uniformly for all $\pi\in\Pi_1$
\begin{align}\label{eqn:grothendieck}
|\langle A-\E A, \pi\pi^T\rangle|\leq 6n\sqrt{np^*}.
\end{align} 
In the remaining part of the proof, we always assume the above event holds. Denote $f'(\pi)=  \langle A+\lambda^* I_n-\lambda^* 1_n1_n^T,\pi\pi^T\rangle- (t^*)^{-1}\sum_{i=1}^n \kl(\pi_{i,\cdot}\|\pipr_{i,\cdot})$ for any $\pi\in\Pi_1$. Here we adopt the notation $\kl(\pi_{i,\cdot}\|\pipr_{i,\cdot})$ short for $\kl(\tcate(\pi_{i,\cdot})\|\tcate(\pipr_{i,\cdot}))$, and we do it in the same way in the rest part of the proof.  Thus,
\begin{align*}
\langle \E A+\lambda^* I_n -\lambda^* 1_n1_n^T,\hat\pi^\mf(\hat\pi^{\mf})^T\rangle &\geq \langle A+\lambda^* I_n -\lambda^* 1_n1_n^T,\hat\pi^\mf(\hat\pi^{\mf})^T\rangle -6n\sqrt{np^*}\\
&= f'(\hat\pi^\mf) -6n\sqrt{np^*} + (t^*)^{-1}\sum_{i=1}^n \kl(\hat\pi_{i,\cdot}^\mf\|\pipr_{i,\cdot})\\
&\geq f'(\Ztruth)-6n\sqrt{np^*}+ (t^*)^{-1}\sum_{i=1}^n \kl(\hat\pi_{i,\cdot}^\mf\|\pipr_{i,\cdot})\\
&\geq \langle \E A+\lambda^* I_n -\lambda^* 1_n1_n^T,\Ztruth\Ztrutht \rangle -12n\sqrt{np^*}\\
&\quad+(t^*)^{-1}\sum_{i=1}^n \kl(\hat\pi_{i,\cdot}^\mf\|\pipr_{i,\cdot})-(t^*)^{-1}\sum_{i=1}^n \kl(\Ztruth_{i,\cdot}\|\pipr_{i,\cdot}),
\end{align*}
where we use Equation \eqref{eqn:grothendieck} twice in the first and last inequality. Note that for any $\pi\in\Pi_1$, we have
\begin{align*}
|\kl(\pi_{i,\cdot}\|\pipr_{i,\cdot})|\leq |\sum_{j}\pi_{i,j}\log\pi_{i,j}|+|\sum_{j}\pi_{i,j}\log\pipr_{i,j}|\leq \log k + \log w,
\end{align*}
where the second inequality is due to $0\geq \sum_{j}\pi_{i,j}\log\pi_{i,j} = \kl(\pi_{i,\cdot}\|k^{-1} 1_k)-\log k\geq -\log k,$ where $k^{-1} 1_k$ can be explicitly written as a length-$k$ vector $(1/k,1/k,\ldots,1/k)$. Then we have
\begin{align*}
\Bigg|\sum_{i=1}^n \kl(\hat\pi^\mf_{i,\cdot}\|\pipr_{i,\cdot})- \sum_{i=1}^n \kl(\Ztruth_{i,\cdot}\|\pipr_{i,\cdot})\Bigg|\leq 2n\log kw.
\end{align*}
Thus,
\begin{align*}
\langle \E A+\lambda^* I_n -\lambda^* 1_n1_n^T,\Ztruth\Ztrutht -\hat\pi^\mf(\hat\pi^{\mf})^T\rangle \leq 12n\sqrt{np^*} +2(t^*)^{-1}n\log kw.
\end{align*}
By Proposition \ref{prop:alpha_plus_gamma}, we have
\begin{align*}
\langle \E A+\lambda^* I_n -\lambda^* 1_n1_n^T,\Ztruth\Ztrutht -\hat\pi^\mf(\hat\pi^{\mf})^T\rangle\geq 2(p^*-q^*)\left[\left(1-\frac{\lambda^*-q^*}{p^*-q^*}\right)\alpha+\frac{\lambda^*-q^*}{p^*-q^*}\gamma\right],
\end{align*}
where $\alpha=\langle\Ztruth\Ztrutht -\hat\pi^\mf(\hat\pi^\mf)^{T},\Ztruth\Ztrutht -I_n\rangle/2$ and $\gamma=\langle\hat\pi^\mf(\hat\pi^\mf)^{T}-\Ztruth\Ztrutht ,1_n1_n^T-\Ztruth\Ztrutht \rangle/2$. By Proposition \ref{prop:lambda}, there exists a constant $c>0$ such that
\begin{align}\label{eqn:inner_product_lower}
\langle \E A+\lambda^* I_n -\lambda^* 1_n1_n^T,\Ztruth\Ztrutht -\hat\pi^\mf(\hat\pi^{\mf})^T\rangle \geq 2c(p^*-q^*)(\alpha+\gamma).
\end{align}
Note that the following inequality holds
\begin{align*}
2(\alpha+\gamma) &=\norm{\Ztruth\Ztrutht -\hat\pi^\mf(\hat\pi^\mf)^{T}}_1-\langle \Ztruth\Ztrutht -\hat\pi^\mf(\hat\pi^\mf)^{T},I_n \rangle/2\\
&\geq \norm{\Ztruth\Ztrutht -\hat\pi^\mf(\hat\pi^\mf)^{T}}_1 - n/2.
\end{align*}
These together lead to
\begin{align*}
\norm{\Ztruth\Ztrutht -\hat\pi^\mf(\hat\pi^{\mf})^T}_1\leq \frac{1}{c(p^*-q^*)}\left[12n\sqrt{np^*} +2(t^*)^{-1}n\log kw+c(p^*-q^*)n/2\right].
\end{align*}
Note that $t^*\asymp (p^*-q^*)/p^*$ when $p^*\asymp q^*$. Together by Proposition \ref{prop:I}, as long as $nI/[k\log kw]\rightarrow\infty$, the last two terms in the RHS of the above formula is dominated by the first term. Thus,
\begin{align*}
\norm{\Ztruth\Ztrutht -\hat\pi^\mf(\hat\pi^{\mf})^T}_1\lesssim \frac{n^2}{\sqrt{nI}}.
\end{align*}
If we further assume $\Ztruth\in\Pi_0^{(\rho,\rho')}$, Proposition \ref{prop:alpha_gamma} and  Equation \eqref{eqn:inner_product_lower} lead to
\begin{align*}
\langle \E A+\lambda^* I_n -\lambda^* 1_n1_n^T,\Ztruth\Ztrutht -\hat\pi^\mf(\hat\pi^{\mf})^T\rangle \geq \frac{\rho c n(p^*-q^*)}{8k}\ell(\hat\pi^\mf,\Ztruth).
\end{align*}
So we have
\begin{align*}
\ell(\hat\pi^\mf,\Ztruth) &\leq \frac{8k}{\rho cn(p^*-q^*)}(12n\sqrt{np^*} +2(t^*)^{-1}n\log kw)\\
&\leq \frac{192 k}{\rho c}\sqrt{\frac{np^*}{(p^*-q^*)^2}}.
\end{align*}
\end{proof}

Before we state the remaining lemmas and propositions used in the Proof of Lemma \ref{lem:mf_loose}, we first introduce two definitions. For any $\pi,\pi'\in[0,1]^{n\times k}$, define $\alpha(\pi;\pi')=\langle\pi^{'}\pi^{'T}-\pi\pi^T,\pi^{'}\pi^{'T}-I_n\rangle/2$ and $\gamma(\pi;\pi')=\langle\pi\pi^T-\pi^{'}\pi^{'T},1_n1_n^T-\pi^{'}\pi^{'T}\rangle/2$.

\begin{proposition}\label{prop:alpha_plus_gamma}
Define $P = \Ztruth B\Ztrutht -p I_n$, with $B=q1_k1_k^T+(p-q)I_k$. We have the equation
\begin{align*}
\langle P+\lambda I_n -\lambda 1_n1_n^T, \Ztruth\Ztrutht  - \pi\pi^T \rangle = 2(p-q)\left[\left(1-\frac{\lambda-q}{p-q}\right)\alpha(\pi;\Ztruth)+\frac{\lambda-q}{p-q}\gamma(\pi;\Ztruth)\right].
\end{align*}
\end{proposition}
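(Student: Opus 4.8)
The plan is a direct computation in the space of symmetric $n\times n$ matrices equipped with the Frobenius inner product. First I would simplify $P+\lambda I_n-\lambda 1_n1_n^T$. Since $\Ztruth\in\Pi_0$, every row of $\Ztruth$ is a standard basis vector, so $\Ztruth 1_k=1_n$ and hence $\Ztruth 1_k1_k^T\Ztrutht=1_n1_n^T$. Plugging $B=q1_k1_k^T+(p-q)I_k$ into $P=\Ztruth B\Ztrutht-pI_n$ gives $\Ztruth B\Ztrutht=q1_n1_n^T+(p-q)\Ztruth\Ztrutht$, so
\begin{align*}
P+\lambda I_n-\lambda 1_n1_n^T=(q-\lambda)1_n1_n^T+(p-q)\Ztruth\Ztrutht+(\lambda-p)I_n .
\end{align*}

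Next I would abbreviate $M:=\Ztruth\Ztrutht-\pi\pi^T$ and take the Frobenius inner product of the displayed identity with $M$, so that the left-hand side of the proposition becomes the linear combination $(q-\lambda)\langle 1_n1_n^T,M\rangle+(p-q)\langle\Ztruth\Ztrutht,M\rangle+(\lambda-p)\langle I_n,M\rangle$. On the other side, the definitions give $2\alpha(\pi;\Ztruth)=\langle M,\Ztruth\Ztrutht\rangle-\langle M,I_n\rangle$ and $2\gamma(\pi;\Ztruth)=\langle M,\Ztruth\Ztrutht\rangle-\langle M,1_n1_n^T\rangle$, while the factor simplifies as $2(p-q)\big[(1-\tfrac{\lambda-q}{p-q})\alpha+\tfrac{\lambda-q}{p-q}\gamma\big]=2(p-\lambda)\alpha+2(\lambda-q)\gamma$. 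Substituting the expressions for $2\alpha$ and $2\gamma$ and collecting the coefficients of $\langle M,\Ztruth\Ztrutht\rangle$, $\langle M,I_n\rangle$ and $\langle M,1_n1_n^T\rangle$ yields coefficients $(p-\lambda)+(\lambda-q)=p-q$, $-(p-\lambda)=\lambda-p$ and $-(\lambda-q)=q-\lambda$ respectively, which matches the linear combination obtained for the left-hand side. This closes the argument.

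This proposition is essentially a bookkeeping identity, so I do not anticipate a genuine obstacle; the only points deserving care are the reduction $\Ztruth 1_k1_k^T\Ztrutht=1_n1_n^T$, which is where membership $\Ztruth\in\Pi_0$ enters, and the final matching of the three coefficients. I would present the computation in the order above: simplify $P$, rewrite the inner product as a combination of the three ``basis'' inner products $\langle\cdot,M\rangle$, expand $\alpha,\gamma$ in the same basis, and compare.
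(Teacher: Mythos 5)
Your proposal is correct and is essentially the paper's own argument: both expand $\Ztruth B\Ztrutht=q1_n1_n^T+(p-q)\Ztruth\Ztrutht$ (using $\Ztruth 1_k=1_n$) and then regroup the resulting inner products with $\Ztruth\Ztrutht-\pi\pi^T$ to recover $2(p-\lambda)\alpha+2(\lambda-q)\gamma$, the only cosmetic difference being that you match coefficients in the basis $\{\Ztruth\Ztrutht,I_n,1_n1_n^T\}$ while the paper groups directly into $\Ztruth\Ztrutht-I_n$ and $\Ztruth\Ztrutht-1_n1_n^T$.
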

\begin{proof}
Note that $\Ztruth B\Ztrutht -p I_n = (p-q) \Ztruth\Ztrutht  + q 1_n1_n^T$. We have
\begin{align*}
\langle P+\lambda I_n -\lambda 1_n1_n^T, \Ztruth\Ztrutht  - \pi\pi^T \rangle&= (p-q)\langle \Ztruth\Ztrutht -\frac{\lambda -q }{p-q}1_n1_n^T+\frac{\lambda - p}{p-q} I_n  , \Ztruth\Ztrutht  - \pi\pi^T \rangle\\
& = (p-q)\langle \Ztruth\Ztrutht -I_n  , \Ztruth\Ztrutht  - \pi\pi^T \rangle\\
&\quad + (\lambda-q)\langle I_n-1_n1_n^T , \Ztruth\Ztrutht  - \pi\pi^T \rangle\\
& = (p-\lambda)\langle \Ztruth\Ztrutht -I_n  , \Ztruth\Ztrutht  - \pi\pi^T \rangle\\
&\quad + (\lambda-q)\langle  \Ztruth\Ztrutht -1_n1_n^T , \Ztruth\Ztrutht  - \pi\pi^T \rangle\\
&=2(p-q)\alpha(\pi;\Ztruth) + 2(\lambda -q)\gamma(\pi;\Ztruth).
\end{align*}
Consequently, we obtain the desired bound.
\end{proof}

\begin{proposition}\label{prop:alpha_gamma}
If $\Ztruth\in\Pi_0^{(\rho,\rho')}$, $\pi\in\Pi_1$, we have
\begin{align*}
\alpha(\pi;\Ztruth)+\gamma(\pi;\Ztruth)\geq \frac{\rho n}{16 k } \ell(\pi,\Ztruth).
\end{align*}
\end{proposition}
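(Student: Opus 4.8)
The plan is to reduce the statement to a combinatorial inequality about the aggregate ``soft confusion matrix'' of $\pi$, and then exploit the fact that the permutation attaining $\ell(\pi,\Ztruth)$ realizes a maximum-weight matching. Since $\Ztruth\Ztrutht$ is unchanged if we relabel the communities of $\Ztruth$, both $\alpha(\pi;\Ztruth)$ and $\gamma(\pi;\Ztruth)$ depend on $\Ztruth$ only through $\Ztruth\Ztrutht$, so I may choose the labeling so that the identity permutation attains the infimum in $\ell(\pi,\Ztruth)$. Write $z=r^{-1}(\Ztruth)$, $V_a=\{i:z_i=a\}$, $n_a=|V_a|\in[\rho n/k,\rho'n/k]$, $e_i=2(1-\pi_{i,z_i})$ (so that $\ell(\pi,\Ztruth)=\sum_i e_i$), and let $m_c^{(b)}=\sum_{j\in V_c}\pi_{j,b}$ be the mass placed by true community $c$ on label $b$, so $\sum_b m_c^{(b)}=n_c$ and $\mu_c:=n_c-m_c^{(c)}$ satisfies $\sum_c\mu_c=\tfrac12\sum_i e_i$.

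First I would rewrite $\alpha+\gamma$ using $2\Ztruth\Ztrutht-I_n-1_n1_n^T$ as a test matrix (it equals $+1$ on same-community off-diagonal entries, $-1$ between communities, $0$ on the diagonal). Because the rows of $\pi$ are probability vectors, $(\Ztruth\Ztrutht-\pi\pi^T)_{ij}\geq0$ when $z_i=z_j$ and $\leq0$ when $z_i\neq z_j$, which yields the exact identity $2(\alpha+\gamma)=\sum_i C_i\geq0$ with $C_i=\sum_{j\neq i,\,z_j=z_i}(1-\langle\pi_{i,\cdot},\pi_{j,\cdot}\rangle)+\sum_{j:\,z_j\neq z_i}\langle\pi_{i,\cdot},\pi_{j,\cdot}\rangle$. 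Using the elementary bounds $1-\langle u,v\rangle\geq u_c(1-v_c)$ (take $c=z_i$) and $\langle u,v\rangle\geq u_bv_b$ (take $b=z_j$) for probability vectors $u,v$, then summing over $i\in V_a$ with $\sum_{i\in V_a}e_i=2\mu_a$, $\sum_{i\in V_a}\pi_{i,b}=m_a^{(b)}$, and $\sum_{i\in V_a}e_i\pi_{i,z_i}\leq\tfrac12 n_a$, I obtain
\begin{align*}
2(\alpha+\gamma)\;\geq\;\sum_a m_a^{(a)}\mu_a\;+\;\sum_b m_b^{(b)}\Bigl(\sum_{a\neq b} m_a^{(b)}\Bigr)\;-\;\frac{n}{4}.
\end{align*}
Setting $\nu_b:=\sum_{a\neq b}m_a^{(b)}$, one has $\sum_a\mu_a=\sum_b\nu_b=\tfrac12\ell(\pi,\Ztruth)$, so it remains to show $\sum_a m_a^{(a)}\mu_a+\sum_b m_b^{(b)}\nu_b\gtrsim\frac{\rho n}{k}\ell(\pi,\Ztruth)$, the term $n/4$ being lower order (and the statement's constant $1/16$ leaves a factor-$2$ cushion).

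For ``clean'' communities/labels, $m_a^{(a)}\geq n_a/2\geq\rho n/(2k)$, the terms $m_a^{(a)}\mu_a$ and $m_b^{(b)}\nu_b$ already dominate $\tfrac{\rho n}{2k}\mu_a$ and $\tfrac{\rho n}{2k}\nu_b$, which more than pays their share. The crux is accounting for the mass shed by ``corrupted'' communities ($m_a^{(a)}<n_a/2$, so $\mu_a>n_a/2$), where $m_a^{(a)}\mu_a$ is too small: one must show the shed mass $\{m_a^{(b)}\}_{b\neq a}$ lands on labels $b$ whose own communities retain enough mass. This is where the maximum-weight-matching property enters: since the identity alignment is optimal, $m_a^{(a)}+m_b^{(b)}\geq m_a^{(b)}+m_b^{(a)}$ for all $a\neq b$, so whenever community $a$ dumps mass $m_a^{(b)}$ onto label $b$, community $b$ must keep at least $m_a^{(b)}-m_a^{(a)}$ of its own mass on label $b$; feeding these swap inequalities into $\sum_b m_b^{(b)}\nu_b$ pairs each unit of shed mass with a comparable unit of retained mass. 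I expect this last step to be the main obstacle: a corrupted community can spread its shed mass over many labels, so a crude Cauchy--Schwarz on $\sum_{b\neq a}(m_a^{(b)})^2$ would lose a factor $k$; the fix is to charge each chunk $m_a^{(b)}$ directly against $m_b^{(b)}$ through the swap inequality and to verify that this global accounting double-counts nothing, after which absorbing the lower-order $n/4$ and the slack in the constant finishes the proof.
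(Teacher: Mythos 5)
Your setup is sound up to the displayed intermediate bound: reducing to the identity permutation, the exact identity $2(\alpha+\gamma)=\sum_i C_i$, the elementary inequalities $1-\langle u,v\rangle\geq u_{z_i}(1-v_{z_i})$ and $\langle u,v\rangle\geq u_{z_j}v_{z_j}$, and the swap inequalities $m_a^{(a)}+m_b^{(b)}\geq m_a^{(b)}+m_b^{(a)}$ are all correct. The gap is in the step you yourself flag as the remaining obstacle, and it cannot be closed: the target inequality
\begin{align*}
\sum_a m_a^{(a)}\mu_a+\sum_b m_b^{(b)}\nu_b\;\gtrsim\;\frac{\rho n}{k}\,\ell(\pi,\Ztruth)
\end{align*}
is false when $k$ is allowed to grow (which the paper explicitly permits). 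Take equal community sizes $n/k$ (so $\rho=1$) and $\pi_{i,\cdot}=(1/k,\dots,1/k)$ for every $i$; the identity permutation is optimal, $m_a^{(b)}=n/k^2$ for all $a,b$, $\ell\asymp n$, so the left side is $\asymp n^2/k^2$ while the right side is $\asymp n^2/k$. Meanwhile the true value is $\alpha+\gamma\asymp n^2/k$, so the loss of the factor $k$ is created by your pointwise bounds, not by the statement: keeping only $\pi_{i,z_i}(1-\pi_{j,z_i})$ and $\pi_{i,z_j}\pi_{j,z_j}$ throws away exactly the shed--shed interactions (mass of one community spread over two different foreign labels, and mass of two corrupted communities dumped on a common label), and these carry the bulk of $\alpha+\gamma$ when the retained masses $m_b^{(b)}$ are small. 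No accounting via the swap inequalities can recover this, since in the example above every product $m_b^{(b)}m_a^{(b)}$ is $\asymp n^2/k^4$ and their total is only $\asymp n^2/k^2$: the required mass is simply absent from the quantity you reduced to. (Secondarily, the claim that the $-n/4$ term is lower order also fails when $\ell(\pi,\Ztruth)\lesssim k/\rho$, though that could be patched by bounding the diagonal correction by $\sum_i\pi_{i,z_i}(1-\pi_{i,z_i})\leq\frac12\ell$ instead of $n/4$.)

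For comparison, the paper avoids this by never reducing to retained-times-shed products. Writing $L_{u,v}=m_u^{(v)}$, it keeps the full quadratic forms $\alpha\geq\frac12\sum_u\sum_{w\neq w'}L_{u,w}L_{u,w'}$ and $\gamma=\frac12\sum_{u\neq v}\sum_w L_{u,w}L_{v,w}$, and splits communities into those where no single foreign label receives more than $\tfrac34|\mathcal{C}_u|$ of the mass (there the within-term alone gives $|\mathcal{C}_u|^2-\sum_wL_{u,w}^2\geq\tfrac14|\mathcal{C}_u|L_u$ regardless of how small $L_{u,u}$ is) and those where some label $v$ receives more than $\tfrac34|\mathcal{C}_u|$ (only there is the swap inequality invoked, forcing $L_{v,v}\geq\tfrac12 L_u$ so that $\gamma$ picks up $L_{u,v}L_{v,v}$). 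If you want to salvage your route, you must retain the cross terms $\sum_{w\neq w'}L_{u,w}L_{u,w'}$ rather than only the $w=z_i$ and $w=z_j$ pieces; at that point you are essentially reconstructing the paper's case analysis.
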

\begin{proof}
We use $\alpha,\gamma$ instead of $\alpha(\pi;\Ztruth),\gamma(\pi;\Ztruth)$ for simplicity. Without loss of generality we assume $\norm{\pi-\Ztruth}_1=\ell(\pi,\Ztruth)$. Define $\mathcal{C}_u=\{i:\Ztruth_{i,u}=1\}$ and $L_{u,v}=\sum_{i\in\mathcal{C}_u}\pi_{i,v}$.  We have the equality $\sum_{v}L_{u,v}=|\mathcal{C}_u|$ and also
\begin{align*}
&\alpha=\frac{1}{2}\sum_{u}\left[|\mathcal{C}_u|^2-\sum_{i,j\in\mathcal{C}_u}\sum_w\pi_{i,w}\pi_{j,w}\right]=\frac{1}{2}\sum_{u}\left[|\mathcal{C}_u|^2-\sum_wL_{u,w}^2\right]=\frac{1}{2}\sum_u\sum_{w\neq w'}L_{u,w}L_{u,w'}\\
\text{and }&\gamma=\frac{1}{2}\sum_{u\neq v}\sum_{i\in\mathcal{C}_u,j\in\mathcal{C}_v}\sum_w\pi_{i,w}\pi_{j,w}=\frac{1}{2}\sum_{u\neq v}\sum_wL_{u,w}L_{v,w}.
\end{align*}
We define $[k]$ into two disjoint subsets $S_1$ and $S_2$ where
\begin{align*}
&S_1=\Big\{u\in[k]:\forall v\neq u, L_{u,v}\leq \frac{3}{4}|\mathcal{C}_u|\Big\},\\
\text{ and }&S_2=\Big\{i\in[k]:\exists v\neq u, L_{u,v}> \frac{3}{4}|\mathcal{C}_u|\Big\}.
\end{align*}
Define $L_u=\sum_{v\neq u}L_{u,v}$. For any $u\in S_1$, if $L_{u,u}\geq |\mathcal{C}_u|/4$, we have $|\mathcal{C}_u|^2-\sum_wL_{u,w}^2\geq L_{u,u}L_u\geq |\mathcal{C}_u|L_u/4$. If $L_{u,u}< \frac{1}{4}|\mathcal{C}_u|$ we have $|\mathcal{C}_u|^2-\sum_wL_{u,w}^2\geq \frac{3}{8}|\mathcal{C}_u|^2\geq |\mathcal{C}_u|L_u/4$ as well. This leads to
\begin{align*}
\alpha\geq \frac{1}{2}\sum_{u\in S_1}\left[|\mathcal{C}_u|^2-\sum_wL_{u,w}^2\right]\geq \frac{1}{8}\sum_{u\in S_1}|\mathcal{C}_u|L_u.
\end{align*}
For any $u\in S_2$ there exists a $v\neq u$ such that $L_{u,v}> \frac{3}{4}|\mathcal{C}_u|$. We must have $L_{u,u}+L_{v,v}\geq L_{u,v}+L_{v,u}$ otherwise $\norm{\pi-\Ztruth}_1=\ell(\pi,\Ztruth)$ does not hold since we can switch the $u$-th and $v$-th columns of $\pi$ to make $\norm{\pi-\Ztruth}_1$ smaller. Consequently, we have $L_{v,v}\geq L_u/2$. So we have $\sum_{u'\neq u}\sum_wL_{u,w}L_{u',w}\geq L_{u,v}L_{v,v}\geq 3|\mathcal{C}_u|L_u/8$. Then we have
\begin{align*}
\gamma \geq \frac{1}{2}\sum_{u\in S_2}\sum_{u'\neq u}\sum_wL_{u,w}L_{u',w}\geq \frac{3}{8}\sum_{u\in S_2}|\mathcal{C}_u|L_u.
\end{align*}
Thus,
\begin{align*}
\alpha +\gamma\geq \frac{1}{16}\sum_{u}|\mathcal{C}_u|L_u \geq \frac{\rho n}{16 k}\sum_u L_u\geq \frac{\rho n}{16 k }\norm{\pi-\Ztruth}_1=\frac{\rho n}{16 k } \ell(\pi,\Ztruth).
\end{align*}
\end{proof}

\subsection{Statements and Proofs of Lemmas and Propositions for Theorem \ref{thm:gibbs}}

\begin{lemma}\label{lem:concentration_beta}
Let $X\sim \tbeta(\alpha,\beta)$ where $\alpha = n^2 p$ and $\beta =n^2(1-p)$ with $p=o(1)$. Let $\eta=o(1)$. Then we have
\begin{align*}
\p(|X-p|\geq  \eta p)\leq \exp(-\eta^2 n^2p/2).
\end{align*}
\end{lemma}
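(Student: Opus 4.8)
The plan is to exploit the representation of a $\tbeta$ variate as a ratio of independent Gamma variates together with a Chernoff bound. Write $X \stackrel{d}{=} U/(U+V)$ with $U \sim \Gamma(n^2 p, 1)$ and $V \sim \Gamma(n^2(1-p), 1)$ independent; this is consistent with $\E X = p$. The event $\{X \ge p(1+\eta)\}$ then coincides with $\{(1 - p(1+\eta))\,U \ge p(1+\eta)\,V\}$, and $\{X \le p(1-\eta)\}$ with $\{(1 - p(1-\eta))\,U \le p(1-\eta)\,V\}$, so the two-sided bound reduces to two one-sided tail bounds for a difference of independent Gamma variables. I would handle each by Chernoff's inequality, using the explicit moment generating functions $\E e^{sU} = (1-s)^{-n^2 p}$ for $s < 1$ and $\E e^{-sV} = (1+s)^{-n^2(1-p)}$.

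Concretely, for the upper tail and any $s \in (0,\, (1-p(1+\eta))^{-1})$,
\begin{align*}
\p\!\left(X \ge p(1+\eta)\right) \le \left(1 - s\big(1-p(1+\eta)\big)\right)^{-n^2 p}\left(1 + s\, p(1+\eta)\right)^{-n^2(1-p)}.
\end{align*}
The logarithm of the right-hand side is strictly convex in $s$ with a unique interior minimizer solving a single linear equation, and evaluating at that minimizer collapses the bound to the compact form
\begin{align*}
\p\!\left(X \ge p(1+\eta)\right) \le \exp\!\left(-n^2\, \kl\!\left(p \,\|\, p(1+\eta)\right)\right),
\end{align*}
where $\kl(a\|b) = a\log(a/b) + (1-a)\log\frac{1-a}{1-b}$ is the Kullback--Leibler divergence between $\Ber(a)$ and $\Ber(b)$; the identical computation applied to the lower tail gives $\p(X \le p(1-\eta)) \le \exp(-n^2\, \kl(p\|p(1-\eta)))$. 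As an alternative, when $n^2 p$ is an integer one may instead use the Beta--Binomial duality $\p(X \le x) = \p(\Bin(n^2-1, x) \ge n^2 p)$ and a Chernoff bound for the binomial, which produces the same exponent.

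It then remains to lower bound the two divergences. Expanding $\kl(p\|p(1\pm\eta))$ to second order in the regime $p = o(1)$, $\eta = o(1)$ gives $\kl(p\|p(1\pm\eta)) = (1-o(1))\,\eta^2 p /2$, the remainder being of order $\eta^2 p^2$ and $\eta^3 p$ and hence negligible next to the main term. This yields $\p(|X - p| \ge \eta p) \le 2\exp(-(1-o(1))\,\eta^2 n^2 p/2)$, and absorbing the lower-order factor and the constant $2$ into the freely chosen $o(1)$ sequence gives the stated inequality. I expect the only genuinely delicate step to be this last expansion --- verifying that the sign and magnitude of the correction terms in the Bernoulli KL divergence are harmless under $p, \eta = o(1)$; everything else is a routine Chernoff calculation with the explicit Gamma moment generating functions.
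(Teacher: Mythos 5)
Your argument is correct and takes a genuinely different route from the paper's. You use the Gamma-ratio representation $X\stackrel{d}{=}U/(U+V)$ and run a Chernoff bound directly on the linear event $\{(1-b)U\ge bV\}$ with $b=p(1\pm\eta)$, optimizing the exponent exactly: your optimizer $s=(b-p)/(b(1-b))$ lies in the admissible range and gives $1-s(1-b)=p/b$, $1+sb=(1-p)/(1-b)$, so the bound indeed collapses to $\p(X\ge b)\le\exp(-n^2\kl(p\|b))$ (and symmetrically for the lower tail); your Taylor expansion of the Bernoulli divergence is also right, the remainders being of order $\eta^3p$ and $\eta^2p^2$ as you say, so each tail is at most $\exp(-(1-o(1))\eta^2n^2p/2)$. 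The paper instead writes $X\stackrel{d}{=}Y/(Y+Z)$ with $Y\sim\chi^2(2\alpha)$, $Z\sim\chi^2(2\beta)$, splits the event by a union bound into a deviation of $Y$ and a deviation of $Y+Z$, and invokes an off-the-shelf two-sided $\chi^2$ tail bound (Proposition \ref{prop:concentration_chi}); that is shorter but lossier, producing $2\exp(-\eta^2n^2p/4)+2\exp(-\eta^2n^2/16)$, i.e.\ constant $1/4$ in the exponent versus your asymptotically sharp $1/2$, and it needs the extra denominator-deviation term that your one-shot Chernoff argument avoids. One caveat: your final step, absorbing the prefactor $2$ and the $(1-o(1))$ into the stated display, is not literally valid for the given $\eta$ unless $\eta^2n^2p\to\infty$; but the paper's own last line (asserting $2e^{-\eta^2n^2p/4}+2e^{-\eta^2n^2/16}\le e^{-\eta^2n^2p/2}$) suffers the same, in fact worse, defect, and in the only place the lemma is applied (the Gibbs-sampling proof) only the order of the exponent matters, so this looseness is inconsequential.
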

\begin{proof}
Note $X$ has the same distribution as $Y/(Y+Z)$ where $Y$ and $Z$ are independent $\chi^2$ random variables with $Y\sim\chi^2(2\alpha)$ and $Z\sim \chi^2(2\beta)$. Then by using tail bound of $\chi^2$ distribution (i.e., Proposition \ref{prop:concentration_chi}) 
\begin{align*}
\p(|X-p|\geq  \eta p)&\leq \p(|Y - 2n^2p|\geq  2\eta n^2p) + \p(|Y+Z - 2n^2| \geq \eta n^2)\\
&\leq 2\exp(-\eta^2 n^2p/4) + 2\exp(-\eta^2n^2/16)\\
&\leq \exp(-\eta^2 n^2p/2).
\end{align*}
\end{proof}

\begin{proposition}\label{prop:concentration_chi}
Let $X\sim \chi^2(k)$ we have
\begin{align*}
\p \Big(|X-k|\geq kt\Big)\leq  2\exp(-kt^2/8),\forall t\in(0,1).
\end{align*}
\end{proposition}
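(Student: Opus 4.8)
The plan is to use the standard Chernoff (exponential moment) method applied to the chi-squared moment generating function, handling the upper and lower tails separately and combining them by a union bound. Recall that for $X\sim\chi^2(k)$ one has the explicit formula $\E e^{sX}=(1-2s)^{-k/2}$ for every $s<1/2$. This is the only special fact about the chi-squared distribution that the argument needs; everything else is elementary calculus.

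For the upper tail I would apply Markov's inequality to $e^{sX}$ with $s\in(0,1/2)$:
\[
\p\bigl(X\geq k(1+t)\bigr)\leq \exp\left(-\frac{k}{2}\bigl[2s(1+t)+\log(1-2s)\bigr]\right),
\]
and then maximize the bracketed exponent over $s$. The optimal choice is $s=t/(2(1+t))$, which lies in $(0,1/2)$ for $t>0$, and substituting it yields $\p(X-k\geq kt)\leq \exp\bigl(-\tfrac{k}{2}(t-\log(1+t))\bigr)$. It then remains to verify the scalar inequality $t-\log(1+t)\geq t^2/4$ on $(0,1)$; this holds because both sides vanish at $t=0$ and the derivative of the left-hand side, $t/(1+t)$, dominates the derivative $t/2$ of the right-hand side precisely when $t\leq 1$.

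For the lower tail I would run the same computation with a negative exponential parameter. Applying Markov's inequality to $e^{-uX}$ with $u>0$ gives
\[
\p\bigl(X\leq k(1-t)\bigr)\leq \exp\left(-\frac{k}{2}\bigl[-2u(1-t)+\log(1+2u)\bigr]\right),
\]
and the optimizing choice $u=t/(2(1-t))$ produces $\p(k-X\geq kt)\leq \exp\bigl(-\tfrac{k}{2}(-t-\log(1-t))\bigr)$. The corresponding inequality $-t-\log(1-t)\geq t^2/4$ on $(0,1)$ is again immediate from a derivative comparison (in fact the left-hand derivative $t/(1-t)$ exceeds $t\geq t/2$). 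Adding the two tail probabilities gives the factor of $2$ in the statement.

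The proof is entirely routine; there is no genuine obstacle beyond checking the two elementary one-variable inequalities, which are classical. As an alternative one could simply invoke the Laurent--Massart bounds $\p(X-k\geq 2\sqrt{kx}+2x)\leq e^{-x}$ and $\p(k-X\geq 2\sqrt{kx})\leq e^{-x}$ with the choice $x=kt^2/8$ and use $t<1$ to confirm $2\sqrt{kx}+2x\leq kt$, but since the result is self-contained I would prefer to include the short Chernoff computation above.
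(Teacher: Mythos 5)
Your proposal is correct, but it takes a different route from the paper: the paper proves this proposition by a one-line citation to Lemma 1 of Laurent and Massart (2000), whereas you give a self-contained Chernoff argument. Your computation checks out in detail: the MGF $(1-2s)^{-k/2}$, the optimizing choices $s=t/(2(1+t))$ and $u=t/(2(1-t))$, the resulting exponents $\tfrac{k}{2}\bigl(t-\log(1+t)\bigr)$ and $\tfrac{k}{2}\bigl(-t-\log(1-t)\bigr)$, and the two scalar inequalities $t-\log(1+t)\geq t^2/4$ and $-t-\log(1-t)\geq t^2/4$ on $(0,1)$ (the first indeed uses $t\leq 1$ via the derivative comparison $t/(1+t)\geq t/2$; the second even holds with $t^2/2$) are all correct, and the union bound gives the factor $2$. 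Your fallback of invoking the Laurent--Massart inequalities with $x=kt^2/8$ also works, since $2\sqrt{kx}+2x = kt\bigl(1/\sqrt{2}+t/4\bigr)\leq kt$ for $t<1$; that variant is essentially the paper's proof made explicit. What your version buys is self-containedness and an explicit, elementary derivation; what the paper's citation buys is brevity and, implicitly, the sharper sub-gamma form of the bound, which is not needed here.
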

\begin{proof}
See Lemma 1 of \cite{laurent2000adaptive}.
\end{proof}

\section{General Derivations of CAVI for Variational Inference}\label{sec:appendix_general}
In this section, we provide the derivation from Equation (\ref{eqn:CAVI_update_general}) to Equation (\ref{eqn:CAVI_update_general_explicit}). First we have
\begin{align}\label{eqn:MF_simplification_proof}
\text{KL}(\mathbf{q}(x)\|\mathbf{p}(x|y))&=\E_{\mathbf{q}(x)}\Big[\log \frac{\mathbf{q}(x)}{\mathbf{p}(x|y)}\Big]\\
&=\E_{\mathbf{q}(x)}[\log \mathbf{q}(x)]-\E_{\mathbf{q}(x)}[\log \mathbf{p}(x|y)]\nonumber\\
&=\E_{\mathbf{q}(x)}[\log \mathbf{q}(x)]-\E_{\mathbf{q}(x)}[\log \mathbf{p}(x, y)]+\log \mathbf{p}(y)\nonumber\\
&=-(\E_{\mathbf{q}(x)}[\log \mathbf{p}(x, y)]-\E_{\mathbf{q}(x)}[\log \mathbf{q}(x)])+\log \mathbf{p}(y)\nonumber\\
&=-\left[\E_{\mathbf{q}(x)}[\log \mathbf{p}(y|x)]-\kl(\mathbf{q}(x)\|\mathbf{p}(x))\right] +\log \mathbf{p}(y).\nonumber
\end{align}
Thus, to minimize $\text{KL}(\mathbf{q}(x)\|\mathbf{p}(x|y))$ w.r.t. $\mathbf{q}(x)$ is equivalent to maximize $ \E_{\mathbf{q}(x)}[\log \mathbf{p}(y|x)]-\kl(\mathbf{q}(x)\|\mathbf{p}(x))$. 

Recall we have independence under both $\mathbf{p}$ and $\mathbf{q}$ for $\{x_i\}_{i=1}^n$. For simplicity, denote $x_{-i}$ to be $\{x_j\}_{j\neq i}$ and $\mathbf{q}_{-i}$ to be $\prod_{j\neq i}\mathbf{q}_j$. We have the decomposition
\begin{align*}
b_i(\mathbf{q}_i) &\triangleq\E_{\mathbf{q}(x)}[\log \mathbf{p}(x, y)]-\E_{\mathbf{q}(x)}[\log \mathbf{q}(x)]\\
&=\E_{\mathbf{q}_i}\left[\E_{\mathbf{q}_{-i}}\left[\log \mathbf{p}(x_i,x_{-i}, y)\right]\right]-\E_{\mathbf{q}_i}\left[\E_{\mathbf{q}_{-i}}[\log \mathbf{q}(x_i,x_{-i})]\right]\\
&=\E_{\mathbf{q}_i}\left[\E_{\mathbf{q}_{-i}}\left[\log \mathbf{p}(x_i|x_{-i}, y)\right]\right]-\E_{\mathbf{q}_i}\left[\log \mathbf{q}_i(x_i)\right] + \text{const}\\
& = - \E_{\mathbf{q}_i} \log \frac{\log \mathbf{q}_i(x_i)}{c^{-1}\exp\left[\E_{\mathbf{q}_{-i}}\left[\log \mathbf{p}(x_i|x_{-i}, y)\right]\right]} + \text{const},
\end{align*}
where the constant includes all terms not depending on $x_i$ and $c=\sum_{x_i}\exp\left[\E_{\mathbf{q}_{-i}}\left[\log \mathbf{p}(x_i|x_{-i}, y)\right]\right]$ which is also independent of $x_i$. It is obvious that to solve Equation (\ref{eqn:CAVI_update_general}) is equivalent to
\begin{align*}
\mathbf{\hat q}_i&=\argmax_{\mathbf{q}_i} b_i(\mathbf{q}_i)\\
&=\argmin_{\mathbf{q}_i} \kl\left[\mathbf{q}_i\|c^{-1}\exp\left[\E_{\mathbf{q}_{-i}}\left[\log \mathbf{p}(x_i|x_{-i}, y)\right]\right]\right].
\end{align*}
Immediately we have $\mathbf{\hat q}_i(x_i) = c^{-1}\exp\left[\E_{\mathbf{q}_{-i}}\left[\log \mathbf{p}(x_i|x_{-i}, y)\right]\right]$.
Or we may write it as
\begin{align*}
\mathbf{\hat q}_i(x_i)\propto\exp\left[\E_{\mathbf{q}_{-i}}\left[\log \mathbf{p}(x_i|x_{-i}, y)\right]\right].
\end{align*}

\end{document}